\newtheorem{theorem}{Theorem}[subsection]
\newtheorem{lemma}[theorem]{Lemma}
\newtheorem{corollary}[theorem]{Corollary}
\newtheorem{proposition}[theorem]{Proposition}
\theoremstyle{definition}
\newtheorem{definition}[theorem]{Definition}
\newtheorem{remark}[theorem]{Remark}
\newtheorem{example}[theorem]{Example}
\newtheorem{question}[theorem]{Question}
\numberwithin{equation}{section}
\numberwithin{theorem}{section}
\newcommand{\cM}{{\cal M}}
\newcommand{\ep}{{\varepsilon}}
\newcommand{\cA}{{\cal A}}
\newcommand{\cP}{{\cal P}}
\newcommand{\cB}{{\cal B}}
\newcommand{\cO}{{\cal O}}
\newcommand{\cI}{{\cal I}}
\newcommand{\cR}{{\cal R}}
\newcommand{\cZ}{{\mathcal Z}}
\newcommand{\C}{{\mathbb C}}
\newcommand{\Z}{{\mathbb Z}}
\newcommand{\N}{{\mathbb N}}
\newcommand{\T}{{\mathbb T}}
\newcommand{\cK}{{\cal K}}
\newcommand{\R}{{\mathbb R}}
\newcommand{\Cs}{{$C^*$-al\-ge\-bra}}
\newcommand{\sh}{{$^*$-ho\-mo\-mor\-phism}}
\date{}
\title{Around traces and quasitraces}
\author{Henning Olai Milh\o j and Mikael R\o rdam{\thanks{The second named author was supported by a research grant from the Danish Council for Independent Research, Natural Sciences.}}}
\begin{document}

\maketitle

\begin{center}
\emph{Dedicated to the memory of Eberhard Kirchberg}
\end{center}

\begin{abstract} 
\noindent This paper presents a survey of results on traces and quasitraces on \Cs s, and it provides some new results  on traces on ultraproducts and on the existence of faithful traces. As for the former, we exhibit a sequence of traceless simple, separable, unital, nuclear \Cs s whose ultraproduct does admit a quasitrace (and likely also a trace). We characterize in different ways \Cs s that admit a faithful trace, respectively, where each quotient of the \Cs{} admits a faithful trace.
\end{abstract}

\section{Introduction}  The purpose of this article is to present a survey of a collection of important results on traces and quasitraces on a \Cs, and to add some new results to this collection. Throughout we have occasion to mention several profound results by Eberhard Kirchberg that offer interesting perspectives to the theory. The new results are in part contained in the PhD thesis of the first-named author.

In Section 2 we discuss the interplay between quasitraces and traces with emphasis on Kaplansky's question if all quasitraces on \Cs s are traces, and we review results on the existence of quasitraces on \Cs s. Section 3 is devoted to the existence of traces and elements of Haagerup's proof that all quasitraces on an exact \Cs{} are traces in the version of the Haagerup--Thorbj\o rnsen approach. Traces on ultraproducts of a sequence of \Cs s is the topic of Section 4, and we present a new example of  a sequence of unital simple separable nuclear traceless \Cs s whose ultraproduct admits a quasitrace. We leave it open if this quasitrace indeed is a trace. Section 5 contains a discussion of so-called ``almost traces'' which necessarily are possessed by any sequence of \Cs s whose ultraproduct admits a trace. Finally, in Section 6, we present conditions ensuring that an (exact) \Cs{} admits a \emph{faithful} trace, respectively, that each quotient of the \Cs{} admits a faithful trace. 

We thank the referee for several useful comments and for pointing out a gap in a proof in the first draft of this paper.

\section{Traces and quasitraces on $C^*$-algebras}

We review here some well-known results on traces and quasitraces on \Cs s with emphasis on the existence of quasitraces and Kaplansky's question if all quasitraces are traces. For convenience we mostly restrict our attention to unital \Cs s. A \emph{trace} on a unital \Cs{} $\cA$ is a bounded linear functional $\tau$ on $\cA$ satisfying the trace condition: $\tau(x^*x) = \tau(xx^*)$, for all $x \in \cA$. If $\tau$, moreover, is a state, then $\tau$ is said to be a \emph{tracial state}. Whenever we henceforth talk about a trace we shall mean a tracial state, unless otherwise stated.

A \emph{quasitrace} $\tau$ on $\cA$ is a  function $\tau \colon \cA \to \C$ satisfying
\begin{itemize}
\item $\tau(x^*x) = \tau(xx^*) \ge 0$, for all $x \in \cA$,
\item $\tau$ is linear on commutative sub-\Cs s of $\cA$,
\item $\tau(a+ib) = \tau(a)+i\tau(b)$, for $a,b \in \cA_\mathrm{sa}$.
\end{itemize}
If, moreover, there exists a quasitrace $\tilde{\tau}$ on $\cA \otimes M_2(\C)$ such that $\widetilde{\tau}(a \otimes e_{11}) = \tau(a)$, for all $a \in \cA$, then $\tau$ is said to be a $2$-quasitrace.\footnote{The extension of $\tau$ to $\widetilde{\tau}$ is non-canonical, unless $\tau$ is a trace.}  It was shown by Blackadar and Handelman in \cite{BlaHan:quasitrace} that every $2$-quasitrace extends to a quasitrace on $\cA \otimes M_n(\C)$, for all $n \ge 3$. They also showed,  
\cite[II.1.6]{BlaHan:quasitrace}, that $2$-quasitraces automatically are norm continuous. 
 Kirchberg proved (unpublished), see \cite[p.\ 1099]{Kir:book}, using a contruction of Aarnes of a non-additive quasi-state on $C([0,1]^2)$, that there exists quasitraces that are not $2$-quasitraces. \emph{For ease of notation we shall here reserve the term quasitrace to mean a $2$-quasitrace that is normalized.}

We let $T(\cA)$ and $QT(\cA)$ denote the sets of all traces, respectively, quasitraces on $\cA$. It is known that both of these sets (if non-empty) are \emph{Choquet simplices}, \cite[3.1.8]{Sak:C*-W*}, \cite[Theorem II.4.4]{BlaHan:quasitrace}, and that $T(\cA)$ is a face in $QT(\cA)$, \cite[Theorem II.4.5]{BlaHan:quasitrace}.

Quasitraces were first encountered in AW$^*$-algebras developed by Kaplansky in 1951--52, \cite{Kap:AW*-1951} and \cite{Kap:AW*-1952}. A unital \Cs{} is an AW$^*$-algebra if each of its MASAs are generated by projections and if each orthogonal family of projections has a least upper bound. AW$^*$-algebras can be divided into types as in the type decomposition for von Neumann algebras. Every AW$^*$-algebra of type II$_1$ admits a quasitrace, and an AW$^*$-factor of type II$_1$ is a von Neumann algebra if and only if it admits a trace, i.e., its unique quasitrace is a trace. While it is known that not all AW$^*$-algebras are von Neumann algebras, it remains an open problem --- raised by Kaplansky --- if every AW$^*$-factor of type II$_1$ is a von Neumann algebra. This question is equivalent to the following:

\begin{question}[Kaplansky] \label{q:Kap} Is every quasitrace on a \Cs{} a trace?
\end{question}

\noindent Suppose we know that every AW$^*$-factor $\cM$ of type II$_1$ is a von Neumann algebra (and hence that the unique quasitrace on $\cM$ is a trace). Let $\tau$ be an extremal quasitrace on some unital \Cs{} $\cA$. By performing a suitable completion of $\cA$ with respect to $\tau$, see, e.g., \cite[Proposition 3.12]{Haa:quasitraces} for details, one obtains a unital $^*$-homomorphism from $\cA$ into an AW$^*$-factor $\cM$ of type II$_1$, so that $\tau$ extends to the (unique) quasitrace $\overline{\tau}$ on $\cM$. Hence $\tau$ is a trace if $\overline{\tau}$ is a trace. 

The best known partial answer to Kaplansky's question was given by U.\ Haagerup, \cite{Haa:quasitraces}, that appeared as a preprint in 1991, and was published in 2014.  

\begin{theorem}[Haagerup] \label{thm:H}
Any quasitrace on any unital exact \Cs{} is a trace.
\end{theorem}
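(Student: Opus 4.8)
The plan is to reduce to \emph{extremal} quasitraces, transport the question to a tracial AW$^*$-completion, and then use exactness to force additivity there --- the point being that on matrix algebras every quasitrace is automatically a trace. The reduction to extremal quasitraces is soft: a quasitrace is a trace precisely when it is additive on self-adjoint elements, and for self-adjoint $a, b \in \cA$ the function $\tau \mapsto \tau(a+b) - \tau(a) - \tau(b)$ is affine and weak$^*$-continuous on the compact convex set $QT(\cA)$, so if it vanishes at every extreme point it attains both its maximum and its minimum, namely $0$, and hence vanishes identically; thus it suffices to prove the theorem for extremal $\tau$. For such $\tau$, the completion procedure recalled above (see \cite[Proposition 3.12]{Haa:quasitraces}) yields a unital \sh{} $\pi \colon \cA \to M$ into an AW$^*$-factor $M$ of type II$_1$, carrying $\tau$ to the unique quasitrace $\bar\tau$ on $M$; then $\tau$ is a trace if and only if $\bar\tau$ is, equivalently if and only if $M$ is a von Neumann algebra. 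Replacing $\cA$ by $\pi(\cA)$, which is again exact as a quotient of $\cA$, we may assume $\cA \subseteq M$ with $1_\cA = 1_M$, so that the task becomes to show $\bar\tau$ is additive on $\cA$.

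Next I would record the soft structure that the rest of the argument uses. A $2$-quasitrace is monotone and norm continuous (Blackadar--Handelman), is automatically unitarily invariant and $\R$-homogeneous on self-adjoint elements, and --- again by Blackadar--Handelman --- extends compatibly to $2$-quasitraces on all matrix amplifications $M_n(\cA)$, in which the canonical copy of $M_n(\C)$ carries its unique, hence tracial, quasitrace; it is in this matricial direction that honest traces appear for free. The quantitative tools available are the Cauchy--Schwarz and Powers--St{\o}rmer / H\"older type inequalities for $2$-quasitraces, together with the $L^2$-type metric on $M$ attached to $\bar\tau$ --- and it should be stressed that for a quasitrace, as opposed to a trace, even setting these up takes work.

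The decisive ingredient is exactness of $\cA$, which I would use in the form of nuclear embeddability: there exist \cp{} contractive maps $\varphi_i \colon \cA \to M_{n_i}(\C)$ and $\psi_i \colon M_{n_i}(\C) \to B(\cH)$ with $\psi_i \circ \varphi_i \to \Id_\cA$ pointwise in norm, and one exploits the approximation properties special to exact \Cs s to arrange that these maps are compatible with $M$ and with $\bar\tau$. Since the unique quasitrace on each $M_{n_i}(\C)$ is the normalized trace $\mathrm{tr}_{n_i}$, which \emph{is} additive, the plan is to compare $\bar\tau$ on $\cA$ with the functionals $a \mapsto \mathrm{tr}_{n_i}(\varphi_i(a))$ --- suitably calibrated against a trace-preserving conditional-expectation-type map onto the matricial part --- and to show, via the inequalities above, that the additivity defect of $\bar\tau$ on $\cA$ is dominated by the multiplicativity defects of the pairs $(\varphi_i,\psi_i)$, which tend to zero. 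Letting $i \to \infty$ then gives additivity of $\bar\tau$ on $\cA$, hence $\tau \in T(\cA)$; combined with the soft reduction of the first paragraph, this proves the theorem.

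I expect this last step to be the main obstacle. Completely positive maps are neither multiplicative nor trace-preserving, so $\bar\tau \circ \psi_i$ need not be tracial on $M_{n_i}(\C)$ and $\mathrm{tr}_{n_i} \circ \varphi_i$ need not be a quasitrace on $\cA$; there is no formal way to transport the trace identity through the approximation, and the whole content lies in a quantitative perturbation estimate bounding the additivity defect of $\bar\tau$ by those vanishing multiplicativity defects. Such an estimate has to draw on the finiteness of $M$ --- it is of type II$_1$, so that the $L^2$- and $L^1$-structures attached to $\bar\tau$ are comparable --- together with the H\"older-type inequalities for $2$-quasitraces, and it is exactly here that exactness is indispensable; for general \Cs s the question whether every quasitrace is a trace is Kaplansky's Question~\ref{q:Kap}, still open.
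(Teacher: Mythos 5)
Your first two reductions are fine and do follow the opening moves of Haagerup's actual argument: the passage to extremal quasitraces via Bauer's maximum principle on the Choquet simplex $QT(\cA)$ is legitimate, and the AW$^*$-completion to a type II$_1$ factor $M$ with $\tau = \bar\tau\circ\pi$ is exactly the device recalled in Section 2 of the paper (with exactness of $\pi(\cA)$ preserved by Kirchberg's theorem on quotients). But from that point on you have written a description of a proof rather than a proof. The entire analytic content of the theorem is the step you yourself flag as ``the main obstacle'': a quantitative estimate bounding the additivity defect of $\bar\tau$ on $\cA$ by the multiplicativity defects of the factorization $\psi_i\circ\varphi_i\to\Id_\cA$. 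You assert that such an estimate ``has to'' exist but supply no mechanism for it, and the mechanism is genuinely missing: nuclear embeddability gives $\|\psi_i(\varphi_i(a))-a\|\to 0$, hence $\bar\tau(\psi_i(\varphi_i(a)))\to\bar\tau(a)$ by norm continuity of $2$-quasitraces, but $\bar\tau\circ\psi_i$ is merely a positive functional on $M_{n_i}(\C)$ with no reason to be (close to) $\mathrm{tr}_{n_i}$, and $\mathrm{tr}_{n_i}\circ\varphi_i$ is a linear functional on $\cA$ with no reason to converge to $\bar\tau$. If you could show the latter convergence you would indeed be done, since a pointwise limit of linear functionals is linear --- but that is precisely the theorem, not a consequence of the c.p.\ approximation property. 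Haagerup's actual proof does not run through comparing $\bar\tau$ with $\mathrm{tr}_{n_i}\circ\varphi_i$; it requires a long chain of new inequalities for quasitraces on the AW$^*$-factor and uses exactness in a quite different (local reflexivity) way.

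It is also worth saying that the paper does not reprove Theorem~\ref{thm:H} either; it cites Haagerup and then, in Section 3, sketches the Haagerup--Thorbj{\o}rnsen route, which deduces from the MF property of $C^*_\lambda(\mathbb{F}_\infty)$ only the weaker statement that a unital exact \Cs{} which is not stably properly infinite admits \emph{some} tracial state (via Theorem~\ref{thm:H-P} and Lemmas~\ref{lm:apprpi}--\ref{lm:H3}), not that a given quasitrace is itself a trace. So your proposal aims at the full statement by the original route, which is the right target, but as it stands the decisive estimate is absent and the suggested way of producing it would not work without substantial further ideas.
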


\noindent Kirchberg, \cite{Kir:quasitraces}, extended Haagerup's theorem to cover also non-unital \Cs s.

It is well understood when a unital \Cs{} possesses a quasitrace. Recall that a unital \Cs{} $\cA$ is said to be \emph{properly infinite} if it contains two isometries with orthogonal range projections (or, equivalently, two mutually orthogonal projections both of which are Murray-von Neumann equivalent to $1_\cA$). We say that $\cA$ is \emph{stably properly infinite} if $M_n(\cA)$ is properly infinite for some $n \ge 1$.  Similarly, we say that $\cA$ is \emph{stably finite} if $M_n(\cA)$ is finite, for all $n \ge 1$, where finite means that no projection is infinite, i.e., not equivalent to a proper subprojection of itself. 

\begin{theorem}[Cuntz, Blackadar--Handelman, \cite{Cuntz:dimension}, \cite{Handel:AW*}, \cite{BlaHan:quasitrace}] \label{thm:C-B-H}
A unital \Cs{}  admits a quasitrace if and only if it is not stably properly infinite.
\end{theorem}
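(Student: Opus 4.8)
The plan is to treat the two implications separately. The implication ``$\cA$ admits a quasitrace $\Rightarrow\cA$ is not stably properly infinite'' is elementary and I would prove it directly; for the converse, which is the substance of the theorem, I would produce a state on the Cuntz semigroup of $\cA$ and then invoke the Blackadar--Handelman passage from dimension functions to $2$-quasitraces.

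\emph{The easy implication.} Suppose $\cA$ carries a quasitrace $\tau$ (a normalized $2$-quasitrace, in our convention). By the Blackadar--Handelman extension result quoted above, for each $n$ there is a quasitrace $\tau_n$ on $M_n(\cA)$ with $\tau_n(a\otimes e_{11})=\tau(a)$, so that $\tau_n(1_{M_n(\cA)})=n>0$. If some $M_n(\cA)$ were properly infinite, choose isometries $s_1,s_2\in M_n(\cA)$ with orthogonal range projections $p_i=s_is_i^*$. Then $\tau_n(p_i)=\tau_n(s_is_i^*)=\tau_n(s_i^*s_i)=\tau_n(1_{M_n(\cA)})=n$, while $p_1,p_2$, and $1_{M_n(\cA)}$ generate a commutative sub-\Cs{} on which $\tau_n$ restricts to a positive linear functional, hence is additive and monotone there. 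Since $p_1+p_2\le 1_{M_n(\cA)}$ we get $2n=\tau_n(p_1)+\tau_n(p_2)=\tau_n(p_1+p_2)\le\tau_n(1_{M_n(\cA)})=n$, a contradiction; so no matrix amplification of $\cA$ is properly infinite.

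\emph{The hard implication.} Assume $\cA$ is not stably properly infinite. I would work in the ordered abelian semigroup $W(\cA)$ of Cuntz equivalence classes $\langle a\rangle$ of positive elements over $\cA$, with order unit $u=\langle 1_\cA\rangle$ (it is an order unit because $a\le\|a\|\,1_{M_k(\cA)}$ forces $\langle a\rangle\le ku$). Using that Cuntz and Murray--von Neumann comparison agree on projections and that $\langle 1_{M_k(\cA)}\rangle=ku$, one checks that for every $m$ the relations $(m+1)u\le mu$, $2mu\le mu$, and ``$M_m(\cA)$ is properly infinite'' are equivalent; hence the hypothesis says precisely that $(m+1)u\not\le mu$ for all $m\ge1$. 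Next I would invoke the Hahn--Banach-type state-extension theorem for preordered abelian semigroups (Blackadar--R\o rdam): a preordered abelian semigroup with order unit $u$ carries a state --- an additive, order-preserving map into $[0,\infty)$ sending $u$ to $1$ --- exactly when $(m+1)u\not\le mu$ for every $m$. Applied to $W(\cA)$ this gives a state $f$, i.e.\ a normalized dimension function on $\cA$; replacing $f$ by its lower semicontinuous regularization $\langle a\rangle\mapsto\sup_{\ep>0}f(\langle(a-\ep)_+\rangle)$, where $(a-\ep)_+$ denotes $g_\ep(a)$ for $g_\ep(t)=\max\{t-\ep,0\}$, we may assume $f$ is lower semicontinuous (and it is again a normalized dimension function). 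The Blackadar--Handelman correspondence between dimension functions and $2$-quasitraces then turns $f$ into a normalized $2$-quasitrace on $\cA$, so $QT(\cA)\ne\emptyset$.

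The main obstacle is this last step: building a genuine $2$-quasitrace out of the dimension function $f$. The natural candidate is $\tau(a)=\int_0^{\infty}f(\langle(a-s)_+\rangle)\,ds$ for $a\ge0$, extended to self-adjoint and then to arbitrary elements. One has to show that the restriction of $\tau$ to each commutative sub-\Cs{} $C^*(a,1)\cong C(\sigma(a))$ is integration against a positive Borel measure on $\sigma(a)$ --- this is what forces $\tau$ to be additive on commutative subalgebras, and it is precisely the point at which lower semicontinuity of $f$ is used. One then has to arrange these measures to be compatible as $a$ varies and to verify the $M_2$-extension property (which is why $f$ must live on $M_\infty(\cA)$, not merely on $\cA$). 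This part is the technical core of the Blackadar--Handelman argument, \cite{BlaHan:quasitrace}; by comparison, the reformulation of the hypothesis and the semigroup state-extension lemma are routine. In the simple case, Cuntz's original dimension-function argument, \cite{Cuntz:dimension}, gives an alternative route to the same conclusion.
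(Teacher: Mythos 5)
Your proposal is correct and follows essentially the same route as the paper: the easy direction by evaluating the (matrix-extended) quasitrace on orthogonal range projections, and the converse by observing that ``not stably properly infinite'' makes $n\langle 1\rangle\mapsto n$ positive, extending it to a dimension function via the Blackadar--R{\o}rdam state-extension theorem, regularizing to a lower semicontinuous dimension function, and invoking \cite[Theorem II.2.2]{BlaHan:quasitrace}. The only cosmetic difference is that you work in $W(\cA)$ where the paper uses $\mathrm{Cu}(\cA)$, which changes nothing.
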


\noindent In the literature, existence of quasitraces is often expressed by the sufficient (but not necessary) condition that the \Cs{} be stably finite, which is a stronger condition than \emph{not stably properly infinite}. For \emph{simple} \Cs s, the two properties ``stably finite'' and ``not stably properly infinite'', agree by Cuntz' observation that infinite projections in a simple \Cs{} automatically are properly infinite.

It is easy to see that no properly infinite \Cs{} can admit a quasitrace, and as quasitraces extend to matrix algebras by convention, the ``only if'' part of Theorem~\ref{thm:C-B-H} follows. 
To see the ``if'' part, consider the Cuntz semigroup $\mathrm{Cu}(\cA)$ of $\cA$ and the element $\langle 1 \rangle \in\mathrm{Cu}(\cA)$ representing the unit of $\cA$. Observe that if $\cA$ is not stably properly infinite,  the map $\N_0 \langle 1 \rangle \to \R$, given by $n\langle 1 \rangle \mapsto n$, is positive.  By \cite{BlaRor:extending}, this positive map extends to a state on $\mathrm{Cu}(\cA)$, i.e., a dimension function on $\cA$, normalized at the unit of $\cA$. A standard trick, cf.\  \cite[I.5]{BlaHan:quasitrace}, produces a lower semi-continuous  dimension function from this dimension function, still normalized at the unit of $\cA$. Finally, by 
\cite[Theorem II.2.2]{BlaHan:quasitrace}, any lower semi-continuous dimension function extends to a quasitrace.

Kirchberg proved the theorem below in the mid 1990s, cf.\ \cite[Theorem E]{Kir:book}, see also \cite[Theorem 4.1.10]{Ror:encyc}. At that time  it was not even known that the minimal tensor product of two simple purely infinite \Cs s is again purely infinite!

\begin{theorem}[Kirchberg] \label{thm:Kir-simple}
The minimal tensor product of two unital infinite-dimensional simple \Cs s is purely infinite if one of them is not stably finite.
\end{theorem}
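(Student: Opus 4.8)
The plan is to reduce the statement to two facts: first, that a unital simple \Cs{} which is not stably finite is automatically purely infinite (this is Cuntz' theorem — in a simple \Cs{}, an infinite projection is properly infinite, and if $M_n(\cA)$ is properly infinite for some $n$, then a standard argument using simplicity propagates pure infiniteness back down to $\cA$); and second, that if $\cA$ is unital, simple, purely infinite and $\cB$ is unital, simple, infinite-dimensional, then $\cA \otimes_{\min} \cB$ is purely infinite. So I would begin by invoking Cuntz' result to replace the hypothesis ``one of them, say $\cA$, is not stably finite'' by ``$\cA$ is purely infinite''.

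The heart of the matter is then: $\cA$ unital simple purely infinite, $\cB$ unital simple infinite-dimensional $\Longrightarrow$ $\cA \otimes_{\min} \cB$ purely infinite. First I would note $\cA \otimes_{\min} \cB$ is simple, since the minimal tensor product of two simple \Cs s with one of them exact — or in general, by Takesaki's theorem, the minimal tensor product of simple \Cs s is simple — is simple. Hence it suffices to exhibit, for a single fixed nonzero positive element $z \in \cA \otimes_{\min} \cB$, an element $v$ with $v^*zv = 1$ (or more precisely, to show $1 \precsim z$ in the Cuntz sense, which in a simple \Cs{} forces pure infiniteness). By a standard reduction we may take $z$ to be close to an elementary tensor $a \otimes b$ with $a \in \cA_+$, $b \in \cB_+$ nonzero; so it is enough to treat $z = a \otimes b$. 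Now since $\cA$ is purely infinite and simple, $a$ is properly infinite and $1_\cA \precsim a$ in $\cA$, so after cutting down we may assume $a \otimes b$ dominates (up to Cuntz equivalence) $1_\cA \otimes b$. Thus the problem reduces to showing $1_{\cA \otimes \cB} \precsim 1_\cA \otimes b$ for every nonzero $b \in \cB_+$.

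The key step — and the main obstacle — is the following: for $\cA$ unital simple purely infinite and $\cB$ unital simple infinite-dimensional, and any nonzero $b \in \cB_+$, one has $1_\cA \otimes 1_\cB \precsim 1_\cA \otimes b$ in $\cA \otimes_{\min} \cB$. The idea is to use that $\cB$ being infinite-dimensional and simple contains, for any $n$, $n$ mutually orthogonal nonzero positive elements $b_1, \dots, b_n$ each Cuntz-subequivalent to $b$ (infinite-dimensionality gives orthogonal nonzero pieces, simplicity lets one compare them to $b$); hence $n \cdot \langle 1_\cA \otimes b \rangle \ge \langle 1_\cA \otimes (b_1 + \cdots + b_n)\rangle$. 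On the other hand, $\cA$ purely infinite simple means $\langle 1_\cA \rangle$ is ``infinitely divisible'' in an appropriate sense: one can find in $\cA$ a projection $p$ with $n$ orthogonal copies of a full projection below it that is also full, forcing $n\langle q \rangle \ge \langle 1_\cA \rangle$ for a suitable full projection $q$; combined with fullness of $b_i$ in $\cB$ and simplicity of the tensor product, this yields $\langle 1_\cA \otimes 1_\cB \rangle \le n \langle 1_\cA \otimes b\rangle$ — but one needs to clear the factor $n$. That is done precisely using proper infiniteness of $1_\cA$ inside $\cA \otimes \cB$ (inherited from pure infiniteness of $\cA$): in a \Cs{} where the unit is properly infinite, $n \langle 1 \rangle \le \langle x \rangle$ implies $\langle 1 \rangle \le \langle x \rangle$. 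Assembling these comparisons gives $1_{\cA \otimes \cB} \precsim a \otimes b$, hence $1_{\cA\otimes\cB}$ is properly infinite and the (simple) algebra $\cA \otimes_{\min} \cB$ is purely infinite. The delicate points are the simplicity of the minimal tensor product and the careful bookkeeping with Cuntz subequivalence and fullness when passing between $\cA$, $\cB$, and $\cA \otimes_{\min} \cB$; the exactness hypothesis is not needed anywhere, which is consistent with the generality of the statement.
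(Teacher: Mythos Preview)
Your very first reduction step is incorrect, and it is precisely the heart of the matter. You claim that a unital simple \Cs{} which is not stably finite is automatically purely infinite, attributing this to Cuntz. Cuntz' result says only that in a simple \Cs{} every infinite projection is \emph{properly} infinite; it does \emph{not} say that the algebra is purely infinite, nor that proper infiniteness of $M_n(\cA)$ ``propagates back down'' to pure infiniteness of $\cA$. In fact the dichotomy ``simple $\Rightarrow$ stably finite or purely infinite'' is false: the paper itself records (Theorem~\ref{thm:P}, from \cite{Ror:simple}) a unital simple separable nuclear \Cs{} $\cP$ which is finite but with $M_2(\cP)$ properly infinite. This $\cP$ is not stably finite and not purely infinite, so your reduction collapses on exactly such examples.

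Consequently the proof cannot be split into ``(not stably finite) $\Rightarrow$ (purely infinite)'' followed by ``(p.i.)~$\otimes$~(simple infinite-dimensional) $\Rightarrow$ (p.i.)''. Kirchberg's argument has to work directly from the weaker hypothesis that one factor is merely not stably finite (equivalently, some matrix algebra over it is properly infinite), and must exploit the \emph{other} infinite-dimensional simple factor to manufacture pure infiniteness in the tensor product. As the paper emphasizes, at the time it was not even known that the tensor product of two simple purely infinite \Cs s is purely infinite, so the result is genuinely deep; the paper does not give a proof but refers to \cite{Kir:book} and \cite[Theorem~4.1.10]{Ror:encyc}. Your second half (the Cuntz-comparison bookkeeping for $\cA$ p.i.\ tensor $\cB$ simple infinite-dimensional) is along reasonable lines for that special case, but it does not cover the theorem as stated.
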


\noindent The theorem leaves open the following:

\begin{question} \label{q:sxs}
Is the minimal tensor product of two unital simple stably finite \Cs s again stably finite?
\end{question}

\noindent If both \Cs s in the question above admit a tracial state, then so does the tensor product, which will entail that the tensor product is stably finite. By Theorems~\ref{thm:C-B-H} and \ref{thm:H},
this is the case if both \Cs s are exact.

As it turns out, Question~\ref{q:Kap}, Question~\ref{q:sxs} and the question if all AW$^*$ factors of type II$_1$ are von Neumann algebras are equivalent. We mentioned already the equivalence of the former with the latter. If Kaplansky's question has an affirmative answer, then all unital stably finite \Cs s admit a tracial state, which entails stable finiteness of the tensor product. Conversely, if Kaplansky's question has a negative answer, then there is an  AW$^*$-factor $\cA$ of type II$_1$ that is not a von Neumann algebra, hence has no trace. By (Haagerup's) Theorem \ref{thm:H-P} below, this would imply that $\cA \otimes C^*_\lambda(\mathbb{F}_\infty)$ is properly infinite, thus yielding a negative answer to Question~\ref{q:sxs}. 
Throughout this text ``$\otimes$'' denotes the minimal tensor product.

A unital simple \Cs{} $\cA$ is said to be \emph{tensorially prime} if it is not isomorphic to a tensor product $\cB \otimes \mathcal{C}$, with $\cB$ and $\mathcal{C}$ infinite dimensional.

\begin{corollary} Let $\cA$ be an  exact simple unital \Cs{} which is not tensorially prime. Then $\cA$ either admits a tracial state and is stably finite, or it is purely infinite. 
\end{corollary}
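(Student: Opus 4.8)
The plan is to use the tensor decomposition of $\cA$ to reduce the dichotomy to a property of the two tensor factors, to which the theorems quoted above apply. So I would first fix an isomorphism $\cA \cong \cB \otimes \mathcal{C}$ with $\cB$ and $\mathcal{C}$ infinite-dimensional, and transfer the relevant structure of $\cA$ to the factors. They are unital because $\cA$ is; they are exact, being isomorphic to the sub-\Cs s $\cB \otimes 1_{\mathcal C}$ and $1_{\cB} \otimes \mathcal{C}$ of the exact \Cs{} $\cA$; and they are simple. The last point is the only one needing an argument: if $J \subsetneq \cB$ were a non-zero proper closed two-sided ideal, then $J \otimes \mathcal{C}$, viewed inside $\cA$, is a non-zero closed two-sided ideal, and it is proper because composing the quotient map $\cB \to \cB/J$ with a state and tensoring with a state on $\mathcal{C}$ produces a (product) state on $\cA$ that vanishes on $J \otimes \mathcal{C}$ but equals $1$ at the unit; this contradicts simplicity of $\cA$, and symmetrically for $\mathcal{C}$.

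Next I would distinguish two cases according to whether both factors admit a tracial state. Suppose first that $\cB$ and $\mathcal{C}$ both admit tracial states, say $\tau_{\cB}$ and $\tau_{\mathcal C}$. Then the product $\tau_{\cB} \otimes \tau_{\mathcal C}$, which is a bounded state on the minimal tensor product and satisfies the trace identity (checked on elementary tensors and extended by density and norm-continuity), is a tracial state on $\cA$. Because $\cA$ is simple, this non-zero trace is automatically \emph{faithful}: its trace-kernel $\{a \in \cA : (\tau_{\cB} \otimes \tau_{\mathcal C})(a^*a)=0\}$ is a proper closed two-sided ideal, hence $\{0\}$. A unital \Cs{} carrying a faithful tracial state $\tau$ is stably finite, since for each $n$ the tracial state $\tau \otimes \mathrm{tr}_n$ on $M_n(\cA)$ is again faithful, so a projection equivalent to a proper subprojection of itself would have trace strictly smaller than itself. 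Hence in this case $\cA$ admits a tracial state and is stably finite --- the first alternative of the corollary.

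In the remaining case at least one factor, say $\mathcal{C}$, admits no tracial state. Since $\mathcal{C}$ is exact and unital, Theorem~\ref{thm:H} forces every quasitrace on $\mathcal{C}$ to be a trace, so $\mathcal{C}$ admits no quasitrace; by Theorem~\ref{thm:C-B-H} the \Cs{} $\mathcal{C}$ is then stably properly infinite, and since it is simple this is the same as saying $\mathcal{C}$ is not stably finite. Now $\cB$ and $\mathcal{C}$ are unital, infinite-dimensional and simple with $\mathcal{C}$ not stably finite, so Theorem~\ref{thm:Kir-simple} applies and yields that $\cA \cong \cB \otimes \mathcal{C}$ is purely infinite --- the second alternative.

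I do not expect a serious obstacle: the content of the statement is carried entirely by the quoted theorems of Kirchberg, Haagerup, and Cuntz--Blackadar--Handelman, and the remaining argument is essentially bookkeeping. The one point requiring a little care is the descent of simplicity (and the inheritance of exactness and unitality) from the tensor product $\cA$ to its factors, which the product-state argument above handles. For orientation one may also record that the two alternatives are mutually exclusive, since the unit of a purely infinite simple \Cs{} is an infinite projection, so such an algebra is neither stably finite nor admits a tracial state.
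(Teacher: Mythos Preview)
Your proof is correct and follows essentially the same route as the paper. The paper does not spell out a proof of this corollary, but the argument is implicit in the discussion immediately preceding it: write $\cA \cong \cB \otimes \mathcal{C}$, note that the factors are simple, unital, exact, and infinite-dimensional, and then split according to whether one factor fails to be stably finite (Kirchberg's Theorem~\ref{thm:Kir-simple} gives pure infiniteness) or both are stably finite (Theorems~\ref{thm:C-B-H} and~\ref{thm:H} produce tracial states on the factors, whose tensor product is a tracial state on $\cA$). Your case split on the existence of tracial states on the factors is equivalent, since for simple exact unital \Cs s these conditions coincide by the same two theorems.
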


\noindent It is known that this dichotomy does not hold without assuming non-primeness, \cite{Ror:simple}.  The corollary holds without the assumption of $\cA$ being exact if and only if Kaplansky's question above has an affirmative answer, cf.\ the discussion above.

\section{Existence of traces on \Cs s}

\noindent We saw in the previous section that a unital \Cs{} admits a quasitrace if and only if it is not stably properly infinite. As we do not know if quasitraces are traces, the condition for having a trace (in the non-exact case) is more involved. We have the following powerful theorem by Haagerup, \cite{Haa:quasitraces}, with the extra condition (iv) added on by Pop, \cite{Pop:traces}:

\begin{theorem}[Haagerup, Pop] \label{thm:H-P} 
The following conditions are equivalent for each unital \Cs{} $\cA$:
\begin{enumerate}
\item $\cA$ admits no tracial state,
\item there exist $n \ge 2$ and $x_1, \dots, x_n \in \cA$ satisfying
$$\sum_{j=1}^n x_j^*x_j = 1_\cA, \qquad \big\| \sum_{j=1}^n x_jx_j^* \big\| < 1,$$
\item for each $\delta > 0$, there exist $n \ge 2$ and $x_1, \dots, x_n \in \cA$ satisfying
$$\sum_{j=1}^n x_j^*x_j = 1_\cA, \qquad \big\| \sum_{j=1}^n x_jx_j^* \big\| \le \delta,$$
\item there exists $n \ge 2$ such that each $a \in \cA$ is a sum of $n$ commutators from $\cA$,
\item $\cA \otimes C^*_\lambda(\mathbb{F}_\infty)$ is properly infinite.
\end{enumerate}
\end{theorem}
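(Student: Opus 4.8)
The plan is to run a short cycle of implications, $\mathrm{(i)}\Rightarrow\mathrm{(ii)}\Rightarrow\mathrm{(iii)}\Rightarrow\mathrm{(iv)}\Rightarrow\mathrm{(i)}$ together with $\mathrm{(i)}\Leftrightarrow\mathrm{(v)}$, in which only $\mathrm{(i)}\Rightarrow\mathrm{(ii)}$ and $\mathrm{(i)}\Rightarrow\mathrm{(v)}$ carry real content. The ``soft'' directions come first: if $\tau$ is a tracial state on $\cA$, then evaluating $\tau$ on the relations of $\mathrm{(ii)}$ gives $1=\tau(1)=\sum_j\tau(x_j^*x_j)=\sum_j\tau(x_jx_j^*)=\tau\big(\sum_j x_jx_j^*\big)\le\big\|\sum_j x_jx_j^*\big\|<1$, a contradiction; $\tau$ annihilates every finite sum of commutators, so $1$ is not such a sum, contradicting $\mathrm{(iv)}$; and $\tau\otimes\tau_0$ (with $\tau_0$ the canonical trace of $C^*_\lambda(\mathbb{F}_\infty)$) would be a tracial state on the properly infinite algebra $\cA\otimes C^*_\lambda(\mathbb{F}_\infty)$, contradicting $\mathrm{(v)}$. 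Hence each of $\mathrm{(ii)},\mathrm{(iv)},\mathrm{(v)}$ implies $\mathrm{(i)}$, and $\mathrm{(iii)}\Rightarrow\mathrm{(ii)}$ is immediate.

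For $\mathrm{(ii)}\Rightarrow\mathrm{(iii)}$ I would amplify: if $\sum_j x_j^*x_j=1$ and $\big\|\sum_j x_jx_j^*\big\|=\delta_0<1$, the $n^m$ products $x_{i_1}\cdots x_{i_m}$ still satisfy $\sum(\cdot)^*(\cdot)=1$ (collapse innermost sums by $\sum_j x_j^*x_j=1$) while $\big\|\sum(\cdot)(\cdot)^*\big\|\le\delta_0^{\,m}$ (collapse innermost sums by $\sum_j x_jx_j^*\le\delta_0\,1$), so any $\delta$ is reached. For $\mathrm{(iii)}\Rightarrow\mathrm{(iv)}$ I would fix a family with $\delta:=\big\|\sum_j x_jx_j^*\big\|<1$ and set $\Phi(a)=\sum_j x_jax_j^*$; this is completely positive with $\|\Phi\|=\|\Phi(1)\|=\delta<1$, so $\Id-\Phi$ is invertible on $\cA$, and with $y=(\Id-\Phi)^{-1}(a)$ one has $\sum_j[x_j^*,x_jy]=\big(\sum_j x_j^*x_j\big)y-\Phi(y)=y-\Phi(y)=a$, writing $a$ as a sum of $n$ commutators with $n$ not depending on $a$.

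The heart is $\mathrm{(i)}\Rightarrow\mathrm{(ii)}$, which is Haagerup's theorem. Let $D=\{\sum_{j=1}^n x_jx_j^*:n\ge1,\ x_j\in\cA,\ \sum_{j=1}^n x_j^*x_j=1\}\subseteq\cA_+$; rescaling the $x_j$ by scalars and concatenating families shows $D$ is \emph{convex}. Put $m(\varphi)=\inf_{d\in D}\varphi(d)$; as an infimum of weak-$*$ continuous functions, $m$ is upper semicontinuous on $S(\cA)$, and clearly $0\le m\le1$. If no state on $\cA$ is tracial, then for each $\varphi$ one can choose $a\in\cA$ with $\|a\|\le1$ and $\varphi(aa^*)<\varphi(a^*a)$; the element $d=aa^*+(1-a^*a)$ lies in $D$ (via $x_1=a,\ x_2=(1-a^*a)^{1/2}$) and has $\varphi(d)=1+\varphi(aa^*)-\varphi(a^*a)<1$, so $m<1$ pointwise. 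Since $S(\cA)$ is weak-$*$ compact and $m$ is upper semicontinuous, $c:=\sup_\varphi m(\varphi)$ is attained, hence $c<1$. Applying Sion's minimax theorem to the bilinear pairing $(\varphi,d)\mapsto\varphi(d)$ on $S(\cA)\times D$ (one side compact, both convex) yields
\[
\inf_{d\in D}\|d\|=\inf_{d\in D}\sup_{\varphi\in S(\cA)}\varphi(d)=\sup_{\varphi\in S(\cA)}\inf_{d\in D}\varphi(d)=c<1,
\]
so some $d=\sum_j x_jx_j^*\in D$ has $\|d\|<1$: this is $\mathrm{(ii)}$. I expect this to be the main obstacle, and the delicate point is exactly the topology — one must pass from ``$m<1$ pointwise'' to ``$\sup m<1$'' using upper semicontinuity and weak-$*$ compactness of the state space, and invoke a minimax theorem requiring compactness on only one side (since $D$ is not compact).

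Finally, for $\mathrm{(i)}\Rightarrow\mathrm{(v)}$: using $\mathrm{(iii)}$, fix $x_1,\dots,x_n$ with $\sum_j x_j^*x_j=1$ and $\delta:=\big\|\sum_j x_jx_j^*\big\|$ as small as desired, and let $u_1,\dots,u_n$ and $u_1',\dots,u_n'$ be the images of two disjoint sets of free generators of $\mathbb{F}_\infty$, so jointly free Haar unitaries in $C^*_\lambda(\mathbb{F}_\infty)$. With $s=\sum_j x_j\otimes u_j$ and $s'=\sum_j x_j\otimes u_j'$ one computes $s^*s=1+R$ and $s^*s'$ as sums of operator coefficients indexed by distinct length-$2$ reduced words; Haagerup's norm inequality for such sums (bounded by a word-length constant times the larger of the row and column norms $\big\|\sum(\cdot)^*(\cdot)\big\|^{1/2}$, $\big\|\sum(\cdot)(\cdot)^*\big\|^{1/2}$, each of which is $\le\sqrt\delta$ here, using $\sum_j x_jx_j^*\le\delta\,1$) gives $\|R\|,\|s^*s'\|=O(\sqrt\delta)$. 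For $\delta$ small, $s^*s$ and $s'^*s'$ are invertible, so $v=s(s^*s)^{-1/2}$ and $v'=s'(s'^*s')^{-1/2}$ are isometries with $\|v^*v'\|=O(\sqrt\delta)<1$. One then concludes via the elementary lemma that two isometries $v,v'$ with $\|v^*v'\|<1$ force proper infiniteness: $w=(1-vv^*)v'$ has $w^*w=1-(v^*v')^*(v^*v')\ge(1-\|v^*v'\|^2)1>0$, so $\tilde w=w(w^*w)^{-1/2}$ is an isometry with $\tilde w\tilde w^*\le1-vv^*$, giving two isometries with orthogonal ranges in $\cA\otimes C^*_\lambda(\mathbb{F}_\infty)$. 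The only nontrivial input here is Haagerup's (Buchholz's) norm estimate for operator-valued sums of free-group translations.
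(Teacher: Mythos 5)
Your proposal is correct, but on both substantive implications it takes a genuinely different route from the paper. For the implication (i)$\Rightarrow$(ii)/(iii), the paper (following Haagerup) passes to the bidual $\cA^{**}$, which is a properly infinite von Neumann algebra when (i) holds, extracts isometries with orthogonal ranges there, and then (a step the survey glosses over) descends the witnesses from $\cA^{**}$ to $\cA$; you instead run a separation/minimax argument on the state space against the convex set $D=\{\sum x_jx_j^*:\sum x_j^*x_j=1\}$, which is self-contained, avoids the bidual and the descent entirely, and whose contrapositive directly manufactures a tracial state as a maximizer of the upper semicontinuous function $m$. The price is that you only get (ii) and must add the amplification $x_{i_1}\cdots x_{i_m}$ to reach (iii), a step the paper does not need; your (iii)$\Rightarrow$(iv) is exactly Pop's Neumann-series argument, which the paper only cites. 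For (ii)/(iii)$\Rightarrow$(v), the paper works in $\cA\otimes\cO_\infty$, where the estimate $\|t_j^*t_i-\delta_{ji}1\|\le 2\sqrt{\alpha}+\alpha$ is completely elementary (Lemma 3.5), and imports the free-probability content solely through Haagerup's embedding $C^*_\lambda(\mathbb{F}_\infty)\hookrightarrow\cO_\infty$ with $f(u_j)\mapsto s_j+s_j^*$ (Lemma 3.6); you work directly in $\cA\otimes C^*_\lambda(\mathbb{F}_\infty)$ with free Haar unitaries and put the free-probability content into the operator-valued Haagerup--Buchholz inequality instead. Both packagings are legitimate, and your closing lemma (two isometries with $\|v^*v'\|<1$ force proper infiniteness) is essentially the paper's Lemma 3.4. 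One small imprecision: as you state it, the Haagerup inequality for length-two words involves only the row and column norms, whereas the correct Buchholz bound for degree $d\ge 2$ also contains intermediate matricial terms; in your application the middle term is the Gram-type matrix $(x_i^*x_j)$, whose norm is at most $2\delta$ (using $x_ix_i^*\le\sum_jx_jx_j^*$), so the needed bound $O(\sqrt{\delta})$ survives, but the inequality should be quoted in its full form.
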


\noindent In (v), $C^*_\lambda(\mathbb{F}_\infty)$ denotes the reduced group \Cs{} associated with the free group, $\mathbb{F}_\infty$, with infinitely many generators ($\lambda$ refers to the left-regular representation). 

A few comments about the proof of this theorem: First, it is clear that each of the conditions (ii)--(v) imply (i). The implications (i) $\Rightarrow$ (iii) is obtained by passing to the bidual $\cA^{**}$, which is a properly infinite von Neumann algebra when (i) holds. Any properly infinite von Neumann algebra (or \Cs{}) contains a sequence $\{s_k\}_{k \ge 1}$ of isometries with pairwise orthogonal range projections. The elements $x_j = n^{-1/2} s_j$, $1 \le j \le n$, satisfy (iii) with $\delta = 1/n$. We review a proof of (iii) $\Rightarrow$ (v) below. C.\ Pop gave a beautiful self-contained short proof of (i) $\Rightarrow$ (iv) in \cite{Pop:traces}.

\begin{corollary} \label{cor:A} A unital \Cs{} $\cA$ admits a tracial state if and only if there exists a state $\rho$ on $\cA$ and $\lambda \ge 1$ such that $\rho(x^*x) \le \lambda \rho(xx^*)$, for all $x \in \cA$.
\end{corollary}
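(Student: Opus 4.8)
The plan is to prove the two implications separately, with the understanding that the non-trivial direction is essentially a quantitative restatement of condition~(iii) in Theorem~\ref{thm:H-P}.

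For the ``only if'' direction I would simply observe that if $\tau$ is a tracial state on $\cA$, then the pair $(\rho,\lambda) = (\tau,1)$ works, since $\tau(x^*x) = \tau(xx^*)$ for every $x \in \cA$.

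For the ``if'' direction I would argue by contraposition: assume $\cA$ has no tracial state, fix an arbitrary state $\rho$ on $\cA$ and an arbitrary $\lambda \ge 1$, and produce an $x \in \cA$ with $\rho(x^*x) > \lambda\, \rho(xx^*)$. The key input is the implication (i) $\Rightarrow$ (iii) of Theorem~\ref{thm:H-P}: applied with $\delta = 1/(2\lambda)$ it yields $n \ge 2$ and $x_1,\dots,x_n \in \cA$ with $\sum_{j=1}^n x_j^*x_j = 1_\cA$ and $\big\|\sum_{j=1}^n x_jx_j^*\big\| \le 1/(2\lambda)$. Evaluating $\rho$ on the first relation gives $\sum_j \rho(x_j^*x_j) = 1$, while positivity and normalization of $\rho$ give $\sum_j \rho(x_jx_j^*) \le 1/(2\lambda)$. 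Consequently
$$\sum_{j=1}^n \big( \rho(x_j^*x_j) - \lambda\, \rho(x_jx_j^*) \big) \;\ge\; 1 - \lambda \cdot \frac{1}{2\lambda} \;=\; \frac12 \;>\; 0,$$
so some index $j$ must satisfy $\rho(x_j^*x_j) > \lambda\, \rho(x_jx_j^*)$, giving the desired $x$. Hence, if such a pair $(\rho,\lambda)$ exists, $\cA$ cannot fall under case~(i) of Theorem~\ref{thm:H-P} and therefore admits a tracial state.

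I do not expect a genuine obstacle here; the only point requiring a little care is the order of the quantifiers — the constant $\lambda$ is supplied in advance, so in the contrapositive one is free to let the parameter $\delta$ in condition~(iii) depend on $\lambda$, and this is precisely what makes the two estimates combine. All of the real substance sits in Theorem~\ref{thm:H-P}; the corollary is just a convenient reformulation of the non-existence of a trace in terms of a single state.
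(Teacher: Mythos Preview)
Your proof is correct and follows essentially the same route as the paper: both argue the ``if'' direction by contraposition, invoking condition~(iii) of Theorem~\ref{thm:H-P} with a small $\delta$ and then using a pigeonhole comparison of $\sum_j \rho(x_j^*x_j)=1$ against $\sum_j \rho(x_jx_j^*)\le \delta$ to find a violating index. The only cosmetic difference is that you fix $\delta = 1/(2\lambda)$ to force a strict inequality, whereas the paper simply lets $\delta>0$ be arbitrary.
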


\noindent
The standard way to prove the existence of a tracial state in a finite von Neumann algebra is to first prove 
Corollary~\ref{cor:A} (often in a quite different way than the one presented here), and then prove the existence of states satisfying the condition of the corollary. See, e.g., \cite[Section 8.2]{KadRin:opII}.

\begin{proof} ``Only if'' is clear. Suppose that $\cA$ admits no tracial state. Let $\rho$ be any state on $\cA$. Let $\delta >0$, and let  $x_1, \dots, x_n \in \cA$  be as in Theorem~\ref{thm:H-P}~(iii). Then $\sum_{j=1}^n \rho(x_j^*x_j) = 1$ and $\sum_{j=1}^n \rho(x_jx_j^*) \le \delta$. It follows that $\rho(x_j^*x_j) \ge \delta^{-1} \rho(x_jx_j^*)$, for at least one $j$. As $\delta >0$ was arbitrary, this proves the ``if'' part of the corollary.
\end{proof}

\noindent One can rephrase Corollary~\ref{cor:A} as follows:

\begin{corollary} \label{cor:B} A unital \Cs{} $\cA$ admits no tracial state if and only if for each state $\rho$ on $\cA$ and each $\delta >0$ there exists $x \in \cA$ such that $\rho(x^*x) =1$ and $\rho(xx^*) \le \delta$.
\end{corollary}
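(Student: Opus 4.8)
The plan is to read off Corollary~\ref{cor:B} from Corollary~\ref{cor:A} by taking contrapositives, the only analytic ingredient being a rescaling of the element produced by Corollary~\ref{cor:A}. Unwinding the quantifiers, Corollary~\ref{cor:A} asserts that $\cA$ carries no tracial state precisely when, for every state $\rho$ on $\cA$ and every $\lambda \ge 1$, there is some $y \in \cA$ with $\rho(y^*y) > \lambda\, \rho(yy^*)$.

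For the forward implication, I would fix a state $\rho$ and $\delta > 0$, set $\lambda = \max\{1, \delta^{-1}\} \ge 1$, and invoke the reformulation above to obtain $y \in \cA$ with $\rho(y^*y) > \lambda\, \rho(yy^*) \ge 0$. Since $\lambda\,\rho(yy^*) \ge 0$, this forces $\rho(y^*y) > 0$, so $x := \rho(y^*y)^{-1/2}\, y$ is well defined and satisfies $\rho(x^*x) = 1$ together with $\rho(xx^*) = \rho(yy^*)/\rho(y^*y) < \lambda^{-1} \le \delta$.

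For the converse I would argue by contradiction: if $\cA$ had a tracial state, Corollary~\ref{cor:A} would furnish a state $\rho$ and $\lambda \ge 1$ with $\rho(x^*x) \le \lambda\, \rho(xx^*)$ for all $x \in \cA$; feeding this $\rho$ and $\delta = (2\lambda)^{-1}$ into the hypothesis produces $x$ with $\rho(x^*x) = 1$ and $\rho(xx^*) \le (2\lambda)^{-1}$, whence $1 = \rho(x^*x) \le \lambda\, \rho(xx^*) \le \tfrac12$, which is absurd.

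There is really no obstacle here: the statement is a verbal rephrasing of Corollary~\ref{cor:A}, and the only thing to watch is that the normalization $\rho(y^*y)^{-1/2}\, y$ is legitimate, which the strict inequality $\rho(y^*y) > \lambda\,\rho(yy^*) \ge 0$ guarantees. If one prefers to avoid contradiction, both implications can be matched directly against the quantifier structure of Corollary~\ref{cor:A}, choosing $\lambda \sim \delta^{-1}$ in one direction and $\delta \sim \lambda^{-1}$ in the other.
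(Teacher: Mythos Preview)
Your proof is correct and matches the paper's intent: the paper offers no separate proof of Corollary~\ref{cor:B}, stating only that it is a rephrasing of Corollary~\ref{cor:A}, and your argument supplies precisely the contrapositive-plus-rescaling that makes this rephrasing rigorous. One could shorten the converse slightly by taking $\rho$ to be a tracial state itself (so that $\rho(x^*x)=\rho(xx^*)$ forces $\rho(xx^*)=1$), but your route through Corollary~\ref{cor:A} is equally valid.
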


\noindent One can probably not control the norm of the element $x \in \cA$ from Corollary ~\ref{cor:A} or ~\ref{cor:B}, see a further discussion on this matter in Remark~\ref{rem:traceless}. However, if $\cA$ is properly infinite, and hence admits a unital embedding of $\cO_\infty$, one can choose $x$ to be an isometry.

The following (standard) lemma is a quantitative version of the well-known fact that the Cuntz--Toeplitz algebra 
$\mathcal{T}_2$, generated by two isometries with orthogonal range projections, is semiprojective:

\begin{lemma} \label{lm:apprpi}
A unital \Cs{} $\cA$ is properly infinite if (and only if) it contains elements $a_1,a_2$ satisfying $\|a_j^*a_i - \delta_{ji} \, 1 \| < 1/2$.  
\end{lemma}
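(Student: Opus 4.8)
The plan is to show that elements $a_1, a_2$ with $\|a_j^*a_i - \delta_{ji}1\| < 1/2$ can be perturbed to genuine isometries with orthogonal ranges, using a functional-calculus argument to turn "approximately isometric" into "isometric" and a similarity trick to restore exact orthogonality. The reverse implication is trivial: if $\cA$ is properly infinite it contains isometries $s_1,s_2$ with $s_1^*s_2=0$, and these satisfy $\|s_j^*s_i - \delta_{ji}1\| = 0$.

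For the forward direction I would proceed in two steps. First, the condition $\|a_j^*a_j - 1\| < 1/2$ forces $a_j^*a_j$ to be invertible with spectrum in $(1/2, 3/2)$, so I can set $v_j := a_j (a_j^*a_j)^{-1/2}$, which is an isometry: $v_j^*v_j = 1$. A standard perturbation estimate then bounds $\|v_j - a_j\|$ in terms of $\|a_j^*a_j - 1\|$ (roughly, $\|(a_j^*a_j)^{-1/2} - 1\|$ is controlled by $\|a_j^*a_j - 1\|$ via the spectral mapping of $t \mapsto t^{-1/2}-1$ on $(1/2,3/2)$, and $\|a_j\|$ is bounded since $\|a_j\|^2 = \|a_j^*a_j\| < 3/2$). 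Feeding this back into $\|v_2^*v_1\| \le \|v_2^*v_1 - a_2^*a_1\| + \|a_2^*a_1\|$ shows that $\|v_2^*v_1\|$ is small — strictly less than $1$, and with a little care one can make the bookkeeping give, say, $\|v_2^*v_1\| < 1$ outright from $\|a_j^*a_i - \delta_{ji}1\|<1/2$ (this is exactly the quantitative content being claimed).

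Second, having two isometries $v_1, v_2$ with $\|v_2^*v_1\| < 1$, I would replace them by isometries with genuinely orthogonal ranges. Consider the range projection $p_1 = v_1v_1^*$; then $\|p_1 v_2\| = \|v_1^* v_2\| < 1$, so $(1-p_1)v_2$ has $\|((1-p_1)v_2)^*(1-p_1)v_2\| = \|1 - v_2^*p_1v_2\|$ and $v_2^*p_1v_2 \ge 0$ with norm $<1$, hence $w := (1-p_1)v_2 \cdot \big(v_2^*(1-p_1)v_2\big)^{-1/2}$ is again an isometry, now with $w^*w = 1$ and $v_1^*w = 0$ since $v_1^*(1-p_1) = 0$. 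Then $v_1, w$ are two isometries in $\cA$ with orthogonal range projections, so $\cA$ is properly infinite.

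The main obstacle — and the only place requiring real care — is the constant tracking in the first step: one must verify that the bound $1/2$ on $\|a_j^*a_i - \delta_{ji}1\|$ is small enough that the resulting $\|v_2^*v_1\|$ is strictly below $1$ (a priori the naive triangle-inequality estimate could be lossy). I expect the clean route is to absorb this into the second step instead: as soon as $\|v_2^*v_1\| < 1$ one wins, and in fact one only needs $\|v_1^*v_2\| < 1$, which follows from $\|a_2^*a_1\| < 1/2$ plus a perturbation bound on $\|v_j - a_j\|$ that is itself $O(\|a_j^*a_j-1\|) = O(1/2)$; choosing the estimates carefully (or invoking semiprojectivity of $\mathcal{T}_2$ as the lemma's preamble suggests) closes the gap. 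Everything else is routine continuous functional calculus on the intervals $(1/2, 3/2)$ and $(0,1)$.
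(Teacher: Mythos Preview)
Your overall architecture matches the paper's exactly: polar-decompose $a_j$ to obtain isometries $v_j = a_j(a_j^*a_j)^{-1/2}$, show $\|v_1^*v_2\|<1$, then orthogonalize. The gap is precisely where you flag it --- your perturbative estimate of $\|v_2^*v_1\|$ does not close. With spectrum of $a_j^*a_j$ in $(1/2,3/2)$ one gets $\|a_j\|<\sqrt{3/2}$ and $\|(a_j^*a_j)^{-1/2}-1\|<\sqrt{2}-1$, hence $\|v_j-a_j\|<\sqrt{3/2}(\sqrt{2}-1)\approx 0.51$; feeding this into the triangle inequality gives only $\|v_2^*v_1\|\lesssim 1.6$, not $<1$. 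No amount of ``choosing the estimates carefully'' repairs this, and invoking semiprojectivity of $\mathcal T_2$ yields only \emph{some} $\ep>0$, not the stated (and, as the paper notes, sharp) constant $1/2$.

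The fix is a one-line direct computation in place of the perturbation: since $v_j=a_j|a_j|^{-1}$, one has
\[
v_1^*v_2 \;=\; |a_1|^{-1}\,a_1^*a_2\,|a_2|^{-1},
\]
so $\|v_1^*v_2\|\le \||a_1|^{-1}\|\cdot\|a_1^*a_2\|\cdot\||a_2|^{-1}\| < \sqrt{2}\cdot\tfrac12\cdot\sqrt{2}=1$. This is exactly the paper's argument, and it is what makes the constant $1/2$ work. Your orthogonalization step (Gram--Schmidt: $w=(1-p_1)v_2\big(v_2^*(1-p_1)v_2\big)^{-1/2}$) is a perfectly valid and arguably more self-contained alternative to the paper's move, which instead uses that $\|p_1p_2\|<1$ provides a unitary $u$ with $up_1u^*\le 1-p_2$.
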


\begin{proof} Set $s_j = a_j|a_j|^{-1}$, $j=1,2$, noting that $a_j^*a_j$ is invertible with spectrum inside $[1/2,3/2]$. Then $s_1,s_2$ are isometries with range projections $p_j = s_js_j^*$. Since
$$\|p_1p_2\| = \|s_1^*s_2\| = \||a_1|^{-1}a_1^*a_2 |a_2|^{-1}\| \le \|a_1^*a_2\| \cdot \||a_1|^{-1}\|  \cdot \||a_2|^{-1}\| <1,$$
it follows that $up_1u^* \le 1-p_2$, for some unitary $u \in \cA$. Thus $us_1, s_2$ are isometries in $\cA$ with orthogonal range projections.
\end{proof}

\noindent The constant $1/2$ in the lemma above is best possible, as witnessed by taking $a_1=a_2 = \frac{1}{\sqrt{2}}$.

\begin{lemma}[Haagerup, \cite{Haa:quasitraces}] \label{lm:H2}
Let $\cA$ be a unital \Cs, and let $x_1, \dots, x_n \in \cA$ be such that $\sum_{j=1}^n x_j^*x_j = 1$. Let $\{s_j\}_{j \ge 1}$ be the canonical generators of the Cuntz algebra $\cO_\infty$, and set
$$t_1 = \sum_{j=1}^n x_j \otimes (s_j+s_j^*), \qquad t_2 =  \sum_{j=1}^n x_j \otimes (s_{j+n} + s_{j+n}^*),$$
in $\cA \otimes \cO_\infty$. Then  $\|t_j^*t_i - \delta_{ji} \, 1\| \le 2 \sqrt{\alpha} + \alpha$, where $\alpha = \| \sum_{j=1}^n x_jx_j^*\|$.  In particular, $C^*(1,t_1,t_2)$ is properly infinite if $2 \sqrt{\alpha} + \alpha < 1/2$.
 \end{lemma}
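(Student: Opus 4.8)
The plan is to write each $t_k$ as an isometry plus a perturbation of norm $\sqrt{\alpha}$, and then simply expand the four products $t_k^*t_l$. Put $Z_1 = \sum_{j=1}^n x_j \otimes s_j$, $V_1 = \sum_{j=1}^n x_j \otimes s_j^*$, $Z_2 = \sum_{j=1}^n x_j \otimes s_{j+n}$ and $V_2 = \sum_{j=1}^n x_j \otimes s_{j+n}^*$ in $\cA \otimes \cO_\infty$, so that $t_k = Z_k + V_k$ for $k=1,2$. One must note that $Z_k^* \neq V_k$ in general (the coefficient of $s_j^*$ in $V_k$ is $x_j$, not $x_j^*$); this asymmetry is precisely what will allow $\|V_k\|$ to be small while $Z_k$ remains isometric.

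The first step is to record, directly from the Cuntz relations $s_i^*s_j = \delta_{ij}1$, the identities $Z_k^*Z_k = \big(\sum_j x_j^*x_j\big)\otimes 1 = 1$ (so $Z_1,Z_2$ are isometries, hence $\|Z_k\|=1$), $V_kV_k^* = \big(\sum_j x_jx_j^*\big)\otimes 1$ (so $\|V_k\| = \|V_kV_k^*\|^{1/2} = \sqrt{\alpha}$, and therefore $\|V_k^*V_k\| = \alpha$), and $Z_1^*Z_2 = \sum_{i,j} x_i^*x_j \otimes s_i^* s_{j+n} = 0$, since the range projections of $s_1,\dots,s_n$ are orthogonal to those of $s_{n+1},\dots,s_{2n}$. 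With these in hand the diagonal case is $t_k^*t_k - 1 = Z_k^*V_k + (Z_k^*V_k)^* + V_k^*V_k$, whence $\|t_k^*t_k - 1\| \le 2\|Z_k\|\,\|V_k\| + \|V_k\|^2 = 2\sqrt{\alpha}+\alpha$; and the off-diagonal case is $t_1^*t_2 = Z_1^*Z_2 + Z_1^*V_2 + V_1^*Z_2 + V_1^*V_2 = Z_1^*V_2 + V_1^*Z_2 + V_1^*V_2$, whence $\|t_1^*t_2\| \le \|V_2\| + \|V_1\| + \|V_1\|\,\|V_2\| = 2\sqrt{\alpha}+\alpha$, and the bound on $t_2^*t_1$ follows by taking adjoints. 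Thus $\|t_k^*t_l - \delta_{kl}1\| \le 2\sqrt{\alpha}+\alpha$ for all $k,l\in\{1,2\}$.

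Finally, if $2\sqrt{\alpha}+\alpha < 1/2$, then $a_1 = t_1$ and $a_2 = t_2$ satisfy $\|a_j^*a_i - \delta_{ji}1\| < 1/2$ inside the unital \Cs{} $C^*(1,t_1,t_2)$, so that algebra is properly infinite by Lemma~\ref{lm:apprpi}. There is no genuine obstacle here: the whole content is the bookkeeping of which terms vanish by the Cuntz relations and which are merely controlled in norm, and the only point on which one must not slip is keeping the starred and unstarred coefficients $x_j^*,x_j$ apart when forming adjoints of $Z_k$ and $V_k$.
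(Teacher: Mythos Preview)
Your proof is correct and follows essentially the same approach as the paper: the paper defines the identical decomposition $t_k = u_k + v_k$ (your $Z_k + V_k$), records that $u_k^*u_\ell = \delta_{k\ell}\,1$ and $v_kv_k^* = \big(\sum_j x_jx_j^*\big)\otimes 1$ so that $\|v_k\|\le\sqrt{\alpha}$, and leaves the remaining expansion implicit. You have simply written out the expansion that the paper calls ``straightforward''.
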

 
 \begin{proof} Set
 $$u_1 =  \sum_{j=1}^n x_j \otimes s_j, \quad u_2 =  \sum_{j=1}^n x_j \otimes s_{j+n}, \quad v_1 = \sum_{j=1}^n x_j \otimes s_j^*, \quad v_2 =  \sum_{j=1}^n x_j \otimes s_{j+n}^*.$$
A straightforward calculation shows that $u_k^*u_\ell = \delta_{k,\ell} \, 1_\cA \otimes 1_{\cO_\infty}$, and $v_kv_k^* = \sum_{j=1}^n x_jx_j^* \otimes 1_{\cO_\infty}$, so $\|v_k\| \le \sqrt{\alpha}$.  
 \end{proof}
 
 \noindent Using free probability theory, Haagerup proved the following lemma (included in the proof of \cite[Theorem 2.4]{Haa:quasitraces}):
 
 \begin{lemma}[Haagerup] \label{lm:H3} 
 There is an embedding $\varphi \colon C^*_{\lambda}(\mathbb{F}_\infty) \to \cO_\infty$ and a continuous func\-tion $f \colon \T \to \R$ such that $\varphi(f(u_j)) = s_j+s_j^*$, where $\{u_j\}_{j\ge 1}$ and $\{s_j\}_{j \ge 1}$ are the canonical generators of $C^*_{\lambda}(\mathbb{F}_\infty)$, respectively, $\cO_\infty$. 
 \end{lemma}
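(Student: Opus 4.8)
The plan is to produce the embedding $\varphi \colon C^*_\lambda(\mathbb{F}_\infty) \to \cO_\infty$ not by an abstract argument but by exhibiting, inside $\cO_\infty$, a family of Haar unitaries that is free in a suitable state and whose joint distribution matches that of the canonical generators $u_j$ of $C^*_\lambda(\mathbb{F}_\infty)$. Recall that $u_j$ is a Haar unitary, its distribution is the normalized Lebesgue measure on $\T$, and freeness of the $u_j$ with respect to the canonical trace $\tau_{\mathbb{F}_\infty}$ is exactly what characterizes $C^*_\lambda(\mathbb{F}_\infty)$ among $C^*$-algebras generated by unitaries: if $w_1, w_2, \dots$ are unitaries in a unital $C^*$-algebra $B$ with a faithful state $\psi$, and $(w_j, \psi)$ is a free family of Haar unitaries, then $C^*(w_1, w_2, \dots) \cong C^*_\lambda(\mathbb{F}_\infty)$ canonically, with $\psi$ restricting to $\tau_{\mathbb{F}_\infty}$. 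So the whole problem reduces to manufacturing such a free family of Haar unitaries inside $\cO_\infty$.

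The key input from free probability is Haagerup's realization of circular/semicircular systems via creation-plus-annihilation operators on full Fock space. On $\mathcal{F}(\C^\infty) = \bigoplus_{k \ge 0} (\C^\infty)^{\otimes k}$, the operators $\ell_j + \ell_j^*$ (with $\ell_j$ the left creation operator by the $j$-th basis vector) form, with respect to the vacuum vector state $\psi_0$, a free family of standard semicircular elements; more precisely, the $\ell_j$ themselves generate the Cuntz algebra $\mathcal{O}_\infty$ in its canonical representation, and $s_j + s_j^* = \ell_j + \ell_j^*$. The first step is therefore to identify $\cO_\infty \subseteq B(\mathcal{F}(\C^\infty))$ via $s_j \mapsto \ell_j$, so that $s_j + s_j^*$ is a semicircular element of radius $2$ in $(\cO_\infty, \psi_0)$, and the family $\{s_j + s_j^*\}_{j \ge 1}$ is free. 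The second step is functional calculus: let $\mu$ be the semicircle law on $[-2,2]$ and let $F_\mu$ be its cumulative distribution function; set $g = (F_\mu)^{-1} \circ (\text{the c.d.f.\ of arclength on }\T)$, or more cleanly, choose a continuous monotone $h \colon [-2,2] \to \T$ (going once around) pushing $\mu$ forward to normalized Lebesgue measure on $\T$. Then $h(s_j + s_j^*)$ is a Haar unitary in $(\cO_\infty, \psi_0)$, and since functional calculus preserves freeness of *-subalgebras, $\{h(s_j+s_j^*)\}_{j\ge1}$ is a free family of Haar unitaries. The lemma's function $f$ is essentially $h^{-1}$: set $f = $ (the map $\T \to \R$ that is inverse to $h$), so that $f(h(s_j+s_j^*)) = s_j + s_j^*$, which upon transporting back along $\varphi^{-1}$ reads $\varphi(f(u_j)) = s_j + s_j^*$ as claimed.

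Assembling this: define $\varphi$ on generators by $\varphi(u_j) = h(s_j + s_j^*)$ and extend; this is a well-defined unital $^*$-homomorphism $C^*_\lambda(\mathbb{F}_\infty) \to \cO_\infty$ because the universal property of the reduced free-group algebra for free Haar unitary families gives both existence and injectivity (the state $\psi_0$ restricted to $C^*(\{h(s_j+s_j^*)\})$ is faithful and is the free-group trace, and a $^*$-homomorphism out of a reduced group $C^*$-algebra that is state-preserving into a $C^*$-algebra with a faithful state is injective). Finally $f \colon \T \to \R$ is continuous because $h$ is a homeomorphism $[-2,2] \to \T \setminus \{*\}$ extended continuously — one must be slightly careful about the single point where $h$ wraps around, but since $\mu$ has no atoms one can arrange $h$ to be a genuine continuous bijection $[-2,2] \to \T$ by identifying endpoints, making $f$ well-defined and continuous on all of $\T$.

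The main obstacle is the freeness-transport-through-functional-calculus step combined with the injectivity of $\varphi$: one needs that $\{h(s_j + s_j^*)\}$ is free in $(\cO_\infty, \psi_0)$ — which follows because each generated von Neumann (or $C^*$-) subalgebra only shrinks under functional calculus, and freeness is inherited by subalgebras — and one needs the precise statement that a free family of Haar unitaries in a $C^*$-probability space with faithful state generates a copy of $C^*_\lambda(\mathbb{F}_\infty)$ on which the state is the canonical trace; this is standard (Voiculescu) but is exactly the point where ``free probability theory'' enters, as the excerpt signals. The honest subtlety, worth a remark in the proof, is the non-atomicity argument ensuring $f$ is continuous at the wrap-around point and that $\psi_0$ is faithful on the relevant subalgebra (faithfulness of the vacuum state on the $C^*$-algebra generated by a semicircular/free family on full Fock space is classical).
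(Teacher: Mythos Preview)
The paper gives no proof of this lemma; it simply attributes the result to Haagerup's quasitraces paper. Your framework --- realizing $\cO_\infty$ on full Fock space so that $\{s_j+s_j^*\}$ is a free semicircular family with respect to the vacuum state, and invoking the characterization of $C^*_\lambda(\mathbb{F}_\infty)$ via free Haar unitaries in a faithful $C^*$-probability space --- is indeed Haagerup's setting, so in outline you are on the right track.

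There is, however, a genuine gap in your construction of the pair $(\varphi,f)$. You set $\varphi(u_j)=h(s_j+s_j^*)$ for a continuous surjection $h\colon[-2,2]\to\T$ pushing the semicircle law to Haar measure, and then want a continuous $f\colon\T\to\R$ with $f\circ h=\mathrm{id}_{[-2,2]}$. No such $f$ exists: since $[-2,2]$ and $\T$ are not homeomorphic, any continuous surjection $h$ must identify at least two points (in your picture, $h(-2)=h(2)$), and then $f(h(-2))=f(h(2))$ would force $-2=2$. Your ``identifying endpoints'' remark does not repair this: it produces a homeomorphism $\T\to\T$ whose inverse lands in $\T$, not in $\R$, whereas the lemma requires $f$ to be real-valued (equivalently, $f(u_j)$ must be self-adjoint). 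Concretely, your embedding has image exactly $C^*(\{h(s_j+s_j^*)\}_j)$, which is a \emph{proper} subalgebra of $C^*(\{s_j+s_j^*\}_j)$ because $h$ is not injective on $\sigma(s_j+s_j^*)=[-2,2]$; the element $s_j+s_j^*$ does not lie in the image of your $\varphi$, so no choice of $f$ can satisfy $\varphi(f(u_j))=s_j+s_j^*$. The substantive free-probability input that the paper is pointing to is the stronger fact that $C^*(\{s_j+s_j^*\}_j)$ is \emph{itself} isomorphic to $C^*_\lambda(\mathbb{F}_\infty)$; under that isomorphism $\varphi(u_j)$ is a unitary with full spectrum $\T$ that necessarily lies outside $C^*(s_j+s_j^*)\cong C([-2,2])$ and must be built using the other free variables as well --- it cannot be obtained by one-variable continuous functional calculus as you attempt.
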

 
 \noindent Combining the three previous lemmas one obtains a proof of (ii) $\Rightarrow$ (v) of Theorem~\ref{thm:H-P}.
  
Following Haagerup--Thorbj\o rnsen, \cite{HaaTho:Annals}, we sketch how one can obtain a proof of Theorem~\ref{thm:H} using Theorem~\ref{thm:H-P} and the deep fact, obtained in  \cite{HaaTho:Annals}, that $C^*_\lambda(\mathbb{F}_\infty)$ is MF. One dosn't quite get the entire Theorem~\ref{thm:H}, but rather that any exact unital  \Cs{} which is not stably properly infinite admits a tracial state. To compactify the notation below, we write $M_k$ for the matrix algebra $M_k(\C)$.
 
 That $C^*_\lambda(\mathbb{F}_\infty)$ is MF means that there is a unital embedding $\Phi$:
 $$
\xymatrix{ & \prod_{m \ge 1} M_{k_m} \ar[d]^-{\pi}\\
C^*_\lambda(\mathbb{F}_\infty) \ar[r]^-\Phi & \prod_{m\ge 1} M_{k_m}/\bigoplus_{m \ge 1} M_{k_m}}
$$
Let $\bar{a}_j = \{a_j(m)\}_{m \ge 1} \in \prod_{m  \ge 1} M_{k_m}$ be self-adjoint lifts of $a_j:=\Phi(f(u_j))$, $1 \le j \le n$, with $\|\bar{a}_j\|=\|a_j\|$, and where $f$ is as in Lemma~\ref{lm:H3}. 

Let $\cA$ be a unital exact \Cs{} with no tracial state. We show that $\cA$ is stably properly infinite. Take $x_1, \dots, x_n \in \cA$ such that $\sum_{j=1}^n x_j^*x_j=1$ and $\alpha := \|\sum_{j=1}^n x_jx_j^*\| \le 1/25$.
Consider the elements 
\begin{alignat*}{4}
r_1 &= \sum_{j=1}^n x_j \otimes a_j,& \quad  r_2 & =\sum_{j=1}^n x_j \otimes a_{j+n}&  \quad \text{in} \quad  & \cA \otimes \big(\prod_{m \ge 1} M_{k_m}/\bigoplus_{m \ge 1} M_{k_m}\big)&,\\
r_1(m) & = \sum_{j=1}^n x_j \otimes a_j(m),& \quad  r_2(m) & = \sum_{j=1}^n x_j \otimes a_{j+n}(m)&  \quad \text{in} \quad  & \cA \otimes M_{k_m}, \quad m \ge 1.&
\end{alignat*}
Then $\|r_j^*r_i - \delta_{ji} 1\| \le 2\sqrt{\alpha} + \alpha < 1/2$ by Lemmas~\ref{lm:H2} and \ref{lm:H3}.
By the assumed exactness of $\cA$, the sequence
$$\xymatrix{0 \ar[r] & {\displaystyle{\bigoplus_{m \ge 1}}} \big(\cA \otimes M_{k_m} \big) \ar[r] & 
\cA \otimes {\displaystyle{\prod_{m \ge 1}}} M_{k_m} \ar[r]^-{\mathrm{id} \otimes \pi} &  
\cA \otimes  \Big({\displaystyle{\prod_{m \ge 1}}} M_{k_m}/ {\displaystyle{\bigoplus_{m \ge 1}}} M_{k_m}\Big)
\ar[r] & 0,}$$
is exact. For $x \in \cA \otimes \big(\prod_{m \ge 1} M_{k_m}\big) \subset\prod_{m \ge 1} \big( \cA \otimes M_{k_m})$, 
we can compute $\|x\| = \sup_{m \ge 1} \|(\mathrm{id} \otimes \pi_m)(x)\|$, where $\pi_m \colon \prod_{\ell \ge 1} M_{k_\ell} \to M_{k_m}$ is the quotient mapping. Exactness of the sequence above further implies that
$$\|(\mathrm{id} \otimes \pi)(x)\| = \limsup_{m \to \infty} \|(\mathrm{id} \otimes \pi_m)(x)\|.$$
This shows that
\begin{equation} \tag{$*$}
\limsup_{m \to \infty} \|r_j(m)^*r_i(m) - \delta_{ji} 1\| = \|r_j^*r_i - \delta_{ji} 1\| < 1/2.
\end{equation}
Hence $\|r_j(m)^*r_i(m) - \delta_{ji} 1\|  < 1/2$, $i,j=1,2$, for some $m \ge 1$, which by Lemma~\ref{lm:apprpi} implies that $\cA \otimes M_{k_m}$ is properly infinite.

One could speculate that there might be some hidden algebraic relations behind the elements $\{a_j(m)\}$, $1 \le j \le 2n$, $m \ge 1$, 
responsible for ($*$) to hold, when $r_j(m)$ are defined as above for any $x_1, \dots, x_n \in \cA$ satisfying $\sum_{j=1}^n x_j^*x_j=1$ and $\|\sum_{j=1}^n x_jx_j^*\| \le 1/25$. If so, then ($*$) would hold without assuming exactness of $\cA$, thus 
(almost) confirming Kaplansky's conjecture.  However, following the arguments of \cite[Proposition 4.9]{HaaTho:traces}, one can show that ($*$) in fact fails for suitable $x_1, \dots, x_n$ in the non-exact \Cs{} $\cA=C^*(\mathbb{F}_\infty)$, no matter what lift $\bar{a}_j$ we choose for $a_j$.

\section{Ultraproducts and traces} \label{sec:ultraproduct}

\noindent
Let $\{\cA_n\}_{n\ge 1}$ be a sequence of unital \Cs s and let $\omega$ be a free ultrafilter on $\N$.  Consider the ultraproduct
$$\prod_\omega \cA_n := \prod_{n \ge 1} \cA_n / \cI_\omega(\{\cA_n\}),$$
where 
$$\cI_\omega(\{\cA_n\}) =\big\{\{x_n\} \in \prod_{n\ge 1} \cA_n : \lim_\omega \|x_n\| = 0\big\} \; \lhd  \: \prod_{n\ge 1} \cA_n .$$
Denote the quotient mapping $\prod_{n \ge 1} \cA_n \to \prod_\omega \cA_n$ by $\pi_\omega$.

Let $T_\omega(\{\cA_n\}) \subseteq T(\prod_\omega \cA_n)$ denote the set of tracial states $\tau$ on $\prod_\omega \cA_n$ which are limits of a sequence $\{\tau_n\}$ of traces $\tau_n \in T(\cA_n)$, in the following sense:
$$\tau(\pi_\omega(\{x_n\})) = \lim_\omega \tau_n(x_n), \qquad \{x_n\} \in \prod_{n \ge 1} \cA_n.$$
The trace $\tau$ defined above will be denoted $\lim_\omega \tau_n$. It is well-defined since the limit on the right-hand side does not depend on the choice of the lift $\{x_n\}$. 

\begin{theorem}[Ozawa, \cite{Ozawa:Dixmier}] 
If $\{\cA_n\}$ is a sequence of unital exact $\cZ$-absorbing \Cs s, and if $\omega$ is a free ultrafilter on $\N$, then $T_\omega(\{\cA_n\})$ is weak$^*$ dense in  $T(\prod_\omega \cA_n)$.
\end{theorem}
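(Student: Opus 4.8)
The plan is to exploit the fact that $\cZ$-absorption of each $\cA_n$ forces the sequence to satisfy a Dixmier-type averaging property at the level of the ultraproduct. The key observation is that for a unital $\cZ$-absorbing \Cs{} $\cB$, any tracial state is a weak$^*$-limit of traces arising from the canonical embedding $\cB \cong \cB \otimes \cZ$, and the trace simplex of $\cB$ can be approximated using finite-dimensional ``matrix-like'' data coming from $\cZ \cong \varinjlim Z_{p_k,q_k}$ (the dimension-drop algebras). So first I would fix $\tau \in T(\prod_\omega \cA_n)$ and a finite set $F \subset \prod_\omega \cA_n$ together with $\ep > 0$; lifting $F$ to sequences $\{x_n^{(i)}\}$, I want to produce traces $\tau_n \in T(\cA_n)$ with $|\lim_\omega \tau_n(x_n^{(i)}) - \tau(\pi_\omega(\{x_n^{(i)}\}))| < \ep$ for all $i$.

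Next I would use Ozawa's averaging technique: because each $\cA_n$ is exact and $\cZ$-absorbing, one has uniformly (in $n$) good approximate central sequences of order-zero maps $M_k \to \cA_n$ (Rokhlin-type / $\cZ$-stability towers), which at the ultraproduct level give a unital embedding of a matrix algebra (or more precisely a ``soft'' version thereof) into the relative commutant $(\prod_\omega \cA_n) \cap F'$. Averaging over the Dixmier-type unitaries produced from these towers replaces any element by something close, in $\|\cdot\|_{2,\tau}$, to a scalar multiple of the unit relative to the sub-building-block, and iterating this across the Jiang--Su tower shows that $\tau$ restricted to the \Cs{} generated by $F$ is approximated by a trace that factors (approximately) through the finite-dimensional data — and such a trace visibly comes from a sequence $\tau_n \in T(\cA_n)$, since the finite-dimensional building blocks lift. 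The weak$^*$ density then follows by the usual net/cofinal argument over finite subsets $F$ and tolerances $\ep$.

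The main obstacle, and where the real work lies, is establishing the uniform central-sequence structure: one must show that $\mathcal{F}_\omega := (\prod_\omega \cA_n) \cap (\prod_\omega \cA_n)'$ — or at least the relative commutant of the separable subalgebra generated by the lifts of $F$ — contains enough Jiang--Su-type structure to run the Dixmier argument, and crucially that the averaging is compatible with an \emph{arbitrary} trace $\tau$ on the ultraproduct (not just one of the form $\lim_\omega \tau_n$). This is exactly the content of Ozawa's Dixmier-property argument for $\cZ$-absorbing \Cs s, and the exactness hypothesis enters to guarantee that the relevant ultraproduct tensor factorizations and the trace-kernel ideal behave well (so that $\|\cdot\|_{2,\tau}$-estimates pass between $\prod_\omega \cA_n$ and $\prod_\omega (\cA_n \otimes \cZ)$). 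I would cite \cite{Ozawa:Dixmier} for the precise packaging of this central-sequence-plus-averaging machinery rather than reproduce it, since reconstructing the uniform order-zero tower estimates is the technical heart and is somewhat lengthy.
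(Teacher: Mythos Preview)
The paper does not give a proof of this theorem: it is stated as a result of Ozawa with a citation to \cite{Ozawa:Dixmier}, and the surrounding text only adds the remark that the conclusion fails without $\cZ$-stability. So there is no ``paper's own proof'' to compare your proposal against.

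That said, your sketch is a reasonable outline of the strategy in Ozawa's paper: the Dixmier-type averaging using unitaries coming from the $\cZ$-stability structure is indeed the mechanism, and your identification of the technical crux --- that the averaging must work for an \emph{arbitrary} trace on the ultraproduct, not just a limit trace --- is correct. Your proposal is honest in saying you would ultimately cite \cite{Ozawa:Dixmier} for the heavy lifting rather than reproduce it; that is exactly what the present paper does as well, only without the surrounding sketch. One small caution: the role of exactness in your last paragraph is a bit vague. In Ozawa's argument exactness is used (via the Haagerup--Kirchberg theorem) to ensure quasitraces are traces, so that the Cuntz-semigroup/dimension-function picture and the tracial picture coincide; your phrasing about ``ultraproduct tensor factorizations'' behaving well is not quite the right way to locate where exactness enters.
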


\noindent Ozawa also remarked that the conclusion of his theorem fails without the assumption on $\cZ$-stability. More recently,  Antoine--Perera--Robert--Thiel, \cite{APRT:ultrapower}, gave several equivalent conditions for weak$^*$ density of  $T_\omega(\{\cA_n\})$  in  $T(\prod_\omega \cA_n)$ in general, and in particular for the case where $\cA_n=\cA$ is fixed. 

We proceed to construct a sequence $\{\cA_n\}$ of traceless simple, unital, separable, nuclear \Cs s where $\prod_\omega \cA_n$ admits a quasitrace. Recall that a \Cs{} is said to have property (SP) if each non-zero hereditary sub-\Cs{} contains a non-zero projection. 

\begin{lemma} \label{lm:n} Let $\cA$ be a unital  \Cs{} with property (SP) which admits no finite dimensional representations. Then, for all $n\ge 1$, there is a (possibly non-unital) embedding $M_n(\C) \to \cA$.
\end{lemma}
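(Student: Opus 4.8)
The plan is to build the copy of $M_n(\C)$ by producing $n$ mutually orthogonal, mutually Murray--von Neumann equivalent non-zero projections in $\cA$ together with a system of matrix units implementing the equivalences; the span of those matrix units is then the desired (non-unital) copy of $M_n(\C)$. So the real content is: find $n$ pairwise orthogonal non-zero projections $p_1,\dots,p_n$ in $\cA$ with $p_1 \sim p_2 \sim \cdots \sim p_n$. I would do this by induction on the number of projections produced, using property (SP) at each stage to manufacture new projections inside suitable hereditary subalgebras.

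First I would observe that since $\cA$ admits no finite-dimensional representations, $\cA$ cannot be realized on a finite-dimensional Hilbert space, and more to the point no non-zero hereditary sub-\Cs{} of $\cA$ can be finite-dimensional (a finite-dimensional hereditary subalgebra would be a direct sum of matrix algebras, and a suitable composition $\cA \to \overline{e\cA e}^{\,**} \to M_k(\C)$ — or more carefully, the existence of a finite-dimensional hereditary subalgebra with unit $e$ together with a state supported there — would yield a finite-dimensional representation; I should be a little careful and argue instead that the absence of finite-dimensional representations forces every non-zero hereditary subalgebra to be infinite-dimensional, which is what (SP) needs to keep splitting off projections). The key inductive step: suppose I have pairwise orthogonal equivalent non-zero projections $p_1,\dots,p_k$ with $k < n$. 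Let $q = p_1 + \cdots + p_k$; then $\overline{(1-q)\cA(1-q)}$ is a non-zero hereditary sub-\Cs{} — non-zero because otherwise $q = 1$ and $\cA \cong M_k(\overline{p_1\cA p_1})$ would have finite-dimensional representations as soon as $\overline{p_1\cA p_1}$ does, and in any case $\overline{p_1 \cA p_1}$ is infinite-dimensional so it surjects onto something admitting finite-dimensional reps, contradiction. Hence by (SP) it contains a non-zero projection $r$. Now I need $r$ to be comparable with $p_1$; here I use that the non-zero hereditary subalgebra $\overline{r\cA r \cap p_1 \cA p_1}$-type intersection argument is awkward, so instead I would cut down: inside $\overline{p_1 \cA p_1}$, which is infinite-dimensional, property (SP) gives a non-zero projection $e \le p_1$ with $p_1 - e \ne 0$ as well; iterating, $\overline{p_1 \cA p_1}$ contains no minimal projection summing to the unit, so $p_1$ dominates arbitrarily "small" projections, and a standard comparison/exhaustion argument (cutting $r$ and $p_1$ down simultaneously, or using that in a \Cs{} with (SP) any two non-zero projections have equivalent non-zero subprojections — the latter is the cleanest lemma to invoke) produces non-zero $r' \le r$ and $p_1' \le p_1$ with $r' \sim p_1'$. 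Replacing each $p_i$ by its image $p_i'$ under the partial isometries (so that $p_1' \sim p_2' \sim \cdots \sim p_k'$ still, after transporting) and setting $p_{k+1} = r'$, I get $k+1$ pairwise orthogonal equivalent non-zero projections, completing the induction.

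The main obstacle I anticipate is exactly the comparison step — getting the new projection $r$ coming out of (SP) to be Murray--von Neumann equivalent to a subprojection of $p_1$ rather than merely non-zero and orthogonal to $p_1,\dots,p_k$. The clean way around it is the lemma that in any \Cs{} with property (SP), given any two non-zero projections $p$ and $q$ there exist non-zero subprojections $p_0 \le p$, $q_0 \le q$ with $p_0 \sim q_0$: one proves this by looking at $\overline{p\cA q}$ — if $pxq \ne 0$ for some $x$, then $|pxq|$ and $|qx^*p|$ have non-zero spectral projections (over $(\|pxq\|/2,\|pxq\|])$, say) living in $\overline{q\cA q}$ and $\overline{p\cA p}$ respectively, and functional calculus on the polar-decomposition-type element $pxq$ gives the desired equivalence; and $pxq$ can be taken non-zero for some $x$ because $p\cA q = 0$ would force $p$ and $q$ to sit in complementary ideals, impossible in the presence of enough projections connecting them — actually simple-mindedly, if $\cA$ has no finite-dimensional (hence no one-dimensional) representations it is in particular not a direct sum involving $\C$, but to be safe I would just invoke (SP) inside $\overline{(p+q)\cA(p+q)}$ if $p \cA q = 0$ to relate them, or note this is standard. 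Once that comparison lemma is in hand, the induction above runs without friction, and I assemble the matrix units $e_{ij}$ from the partial isometries $v_i \colon p_1 \to p_i$ via $e_{ij} = v_i v_j^*$, yielding the embedding $M_n(\C) \to \cA$ with unit $p_1 + \cdots + p_n \ne 1$ in general — precisely the "possibly non-unital" in the statement.
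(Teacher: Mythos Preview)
Your inductive approach has a genuine gap at the comparison step. The lemma you want --- that in a \Cs{} with property (SP) any two non-zero projections have Murray--von Neumann equivalent non-zero subprojections --- holds for \emph{simple} \Cs s (and that is the standard version), but it is false in general. Take $\cA = \cB \oplus \mathcal{C}$ with $\cB$ and $\mathcal{C}$ simple, unital, infinite-dimensional, and with (SP); then $\cA$ has (SP) and admits no finite-dimensional representations, yet for non-zero projections $p \in \cB$ and $q \in \mathcal{C}$ one has $p\cA q = 0$, so no non-zero subprojections of $p$ and $q$ can be equivalent. Your suggested patches do not help here: $(p+q)\cA(p+q) = p\cB p \oplus q\mathcal{C}q$ is still a direct sum, and the absence of one-dimensional representations says nothing about whether $p\cA q$ vanishes. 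Concretely, at stage $k$ of your induction nothing prevents the projection $r$ produced by (SP) inside $(1-\sum_i p_i)\cA(1-\sum_i p_i)$ from landing in an ideal orthogonal to the one containing $p_1$, and then the induction stalls.

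The paper avoids this entirely by invoking Glimm's lemma at the outset: since $\cA$ has an irreducible representation of dimension at least $n$, there is a non-zero \sh{} $\varphi \colon C_0((0,1]) \otimes M_n(\C) \to \cA$. This produces in one stroke $n$ pairwise orthogonal positive elements $a_j = \varphi(\iota \otimes e_{jj})$ together with off-diagonal elements $z_j = \varphi(\iota \otimes e_{j1})$ linking them. Property (SP) is then used only once, to extract a non-zero projection $q_1$ from $\overline{a_1 \cA a_1}$, and the $z_j$ transport $q_1$ (via polar decomposition in $\cA^{**}$) to equivalent projections $q_j \in \overline{a_j \cA a_j}$. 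Glimm's lemma supplies the equivalences up front, so the comparison problem you ran into never arises.
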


\begin{proof} By Glimm's lemma there is a non-zero \sh{} $\varphi \colon C_0((0,1]) \otimes M_n(\C) \to \cA$. (For this we only need that $\cA$ admits an irreducible representation of dimension at least $n$.)  Let $\iota \in C_0((0,1])$ denote the function $\iota(t) = t$, and let $\{e_{ij}\}\subseteq M_n(\C)$ be a set of matrix units. Set $a_j = \varphi(\iota \otimes e_{jj})$. Choose a non-zero projection $q_1$ in the hereditary sub-\Cs{} $\overline{a_1\cA a_1}$. Set $z_j=\varphi(\iota \otimes e_{j1})$, $2 \le j \le n$, and note that $|z_j| = a_1$. Write $z_j = v_j |z_j|$, with $v_j$ a partial isometry in $\cA^{**}$. Then $w_j:= v_jq_1$ is a partial isometry in $\cA$ satisfying $w_j^*w_j = q_1$ and $q_j:= w_jw_j^* \in \overline{a_j\cA a_j}$. It follows that $q_1, q_2, \dots, q_n$ are pairwise equivalent and pairwise orthogonal projections in $\cA$, witnessing the existence of the  embedding $M_n(\C) \to \cA$.
\end{proof}

\noindent The result below was obtained by the second named author in \cite{Ror:simple} and \cite{Ror:SP}:

\begin{theorem} \label{thm:P}
There exists a unital simple separable nuclear \Cs{} $\cP$ which is finite, but where $M_2(\cP)$ is properly infinite. Moreover, $\cP$ has property (SP), but is not of real rank zero.
\end{theorem}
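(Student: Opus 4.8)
The plan is to construct $\cP$ as a carefully chosen inductive limit of recursive subhomogeneous (or more concretely, of building blocks built from interval algebras tensored with matrix algebras) so that the map on $K$-theory and on the Cuntz semigroup forces the stated behaviour. The key tension is that $\cP$ itself must be finite — so $\langle 1_\cP \rangle$ should not be a properly infinite element of $\Cu(\cP)$ and, by Theorem~\ref{thm:C-B-H}, $\cP$ must admit a quasitrace — while $M_2(\cP)$ is properly infinite, which means $2\langle 1_\cP\rangle = \langle 1_{M_2(\cP)}\rangle$ \emph{is} properly infinite in $\Cu(M_2(\cP)) = \Cu(\cP)$. Thus the heart of the matter is to produce a simple \Cs{} whose Cuntz semigroup contains an element $u = \langle 1_\cP \rangle$ with $u$ finite but $2u = \infty$ (equivalently $2u \leq u$, or $2u$ absorbing), together with a suitable perforation phenomenon. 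First I would recall from \cite{Ror:simple} the dimension-drop-type construction: one builds $\cP$ as $\lim_k (\cA_k, \varphi_k)$ where $\cA_k$ is a finite direct sum of algebras of the form $C([0,1], M_{n})$ (or dimension-drop intervals), and the connecting maps $\varphi_k$ are chosen with prescribed multiplicities and a controlled number of ``rank-jumping'' summands so that in the limit (i) simplicity holds (standard: make the point evaluations eventually spread out densely), (ii) nuclearity holds (automatic for such inductive limits), (iii) separability and unitality hold (automatic), and (iv) the $K_0$-ordered group, or rather the Cuntz semigroup, acquires the desired perforation.

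Next I would verify the finiteness of $\cP$ and proper infiniteness of $M_2(\cP)$ at the level of the Cuntz semigroup. The mechanism from \cite{Ror:simple} is that one arranges $\Cu(\cP)$ so that $\langle 1 \rangle$ is ``almost unperforated in the wrong direction'': concretely one wants a comparison failure of the shape $2\langle 1\rangle \leq \langle 1 \rangle$ holding in $\Cu(\cP)$ while $\langle 1\rangle \napprox n\langle p\rangle$ for the relevant subprojections, which is precisely achieved by letting the connecting maps create, at each stage, a summand whose unit has very large matricial size relative to the point evaluations feeding into it, but only on ``half'' of the interval — so that one copy of $1_\cP$ is genuinely finite but two orthogonal copies fit inside one. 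To get $M_2(\cP)$ \emph{properly} infinite (not merely infinite) I would invoke Lemma~\ref{lm:apprpi}: it suffices to exhibit $a_1, a_2 \in M_2(\cP)$ with $\|a_j^*a_i - \delta_{ji} 1\| < 1/2$; these are produced at a finite stage $\cA_k$ from the explicit matrix units available in the large-matrix-size summand, and then pushed forward, with the norm estimate preserved because the connecting maps are (approximately) unital and isometric on the relevant corner. Finiteness of $\cP$ itself follows because the quasitrace / lower semicontinuous dimension function coming from Theorem~\ref{thm:C-B-H} (applied using that we arranged $\langle 1_\cP\rangle$ not to be properly infinite — one checks no two orthogonal copies of $1_\cP$ embed in $1_\cP$ at any finite stage, hence not in the limit) obstructs $1_\cP$ from being infinite.

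Finally I would establish property (SP) and the failure of real rank zero. Property (SP) follows from Lemma~\ref{lm:n} once we check that $\cP$, being simple, unital and infinite-dimensional, admits no finite-dimensional representations — wait, that is circular, so instead I would argue (SP) directly from the inductive-limit structure: each building block $C([0,1],M_n)$ manifestly has lots of projections, simplicity of $\cP$ forces every nonzero hereditary sub-\Cs{} to ``see'' a point evaluation at a deep enough stage, and a hereditary sub-\Cs{} of $M_n$ always contains a rank-one projection, which one pulls back; this is the content of \cite{Ror:SP}. The failure of real rank zero is the subtle extra point and I expect it to be the main obstacle: real rank zero would force $K_0(\cP)$ (or the projections) to be rich enough to separate traces/quasitraces and, more to the point, an $RR=0$ simple \Cs{} with a properly infinite matrix amplification is forced to be itself purely infinite (hence infinite) by the dichotomy for $RR=0$ simple algebras — contradicting finiteness of $\cP$. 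So $\cP$ cannot have real rank zero; making this precise requires citing the relevant structure theory for real-rank-zero simple \Cs s (that a finite simple $RR=0$ algebra is stably finite), which then immediately contradicts $M_2(\cP)$ being properly infinite. The one genuinely delicate calibration is choosing the connecting maps so that all of simplicity, the exact finite/infinite split between $\cP$ and $M_2(\cP)$, and (SP) hold simultaneously — this is exactly where the constructions of \cite{Ror:simple, Ror:SP} do the real work, and I would cite them for the precise multiplicity bookkeeping rather than reproduce it.
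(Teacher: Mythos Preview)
First, a clerical point: the paper does not give a proof of Theorem~\ref{thm:P}; it simply quotes it from \cite{Ror:simple} and \cite{Ror:SP}. So the relevant comparison is with the construction in those references.

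Your proposal has a genuine structural gap. Building $\cP$ as an inductive limit of algebras of the form $C([0,1],M_n)$ or dimension-drop intervals cannot produce the required phenomenon. The interval $[0,1]$ is contractible, so there are no topological obstructions to comparison of projections in these building blocks; each one has stable rank one, and stable rank one passes to inductive limits, forcing the limit to be stably finite. Hence $M_2(\cP)$ could never become properly infinite along this route. The actual construction in \cite{Ror:simple} works over high-dimensional base spaces (infinite products of even-dimensional spheres), and the mechanism is a characteristic-class obstruction, in the spirit of Villadsen, to embedding certain vector bundles into others: one arranges a projection that cannot be cut down to a proper subprojection of itself (an Euler-class argument), while its double is large enough that the obstruction disappears and absorption takes over. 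None of this is available over one-dimensional base spaces.

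Your Cuntz-semigroup bookkeeping is also off in a way that matters. You write that one wants ``$2\langle 1\rangle \le \langle 1\rangle$ in $\Cu(\cP)$'', but that relation is exactly proper infiniteness of $\cP$ itself (it gives two isometries in $\cP$ with orthogonal ranges), contradicting the finiteness you are trying to secure. The correct target is that $2\langle 1\rangle$ be properly infinite (equivalently $4\langle 1\rangle \le 2\langle 1\rangle$) while $\langle 1\rangle$ remains finite. Producing this gap between $\langle 1\rangle$ and $2\langle 1\rangle$ is precisely the delicate point, and it is achieved in \cite{Ror:simple} via the vector-bundle obstructions just mentioned, not via multiplicity bookkeeping in interval algebras.
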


\noindent We can sharpen the first part of the theorem above as follows:

\begin{theorem} \label{thm:simple} For each $n \ge 1$, there exists a unital \Cs{} $\cP_n$, Morita equivalent to the \Cs{} $\cP$ in the theorem above, such that $M_k(\cP_n)$ is finite, for $k \le n$, and properly infinite, for $k > n$.
\end{theorem}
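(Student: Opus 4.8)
The plan is to build $\cP_n$ as a corner of $\cK\otimes\cP$, i.e. as $p(\cK\otimes\cP)p$ for a suitable projection $p\in\cK\otimes\cP$, chosen so that $\langle p\rangle$ sits strictly between $(n-1)\langle 1_\cP\rangle$ and $n\langle 1_\cP\rangle$ in the sense of proper infiniteness of matrix amplifications. Concretely, I would first note that $M_2(\cP)$ being properly infinite means $2\langle 1_\cP\rangle$ is a properly infinite element of the Cuntz semigroup (indeed $2\langle 1_\cP\rangle=\infty\cdot\langle 1_\cP\rangle$ in the appropriate sense inside a simple \Cs), while $\langle 1_\cP\rangle$ itself is finite. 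The idea is that for $k\le n$ we want $k$ copies of $\cP_n$ to still ``fit inside $n-1$ copies of $\cP$'' and stay finite, whereas $n+1$ copies of $\cP_n$ should contain $2n$ copies of $\cP$ worth of room, hence be properly infinite.

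The key steps, in order, are: (1) Fix the identification $\cP_n=p(M_N(\cP))p$ for $N$ large and a projection $p$ with $\langle p\rangle$ close to but strictly below $\frac{n}{1}\langle 1_\cP\rangle$; the right normalization is to arrange that $M_k(\cP_n)$ is Murray--von Neumann equivalent to a corner $q_k(M_{kN}(\cP))q_k$ with $\langle q_k\rangle \le (n-1)\langle 1_\cP\rangle + \langle 1_\cP\rangle$-ish for $k\le n$ — here I would use that $\cP$ is simple and finite so that any projection dominated (in Cuntz sense) by $(n-1)\langle 1_\cP\rangle$ with a little slack, when it embeds as an actual subprojection of $M_{n-1}(\cP)$, remains finite because $M_{n-1}(\cP)$ is finite by Theorem~\ref{thm:P} together with the observation that finiteness of $M_2(\cP)$ failing forces us to stay at $M_k$ for $k\le n$ — wait, $\cP$ has $M_2(\cP)$ properly infinite, so this naive approach needs the $n\ge 2$ case handled differently. (2) For the finiteness direction I would instead appeal directly to dimension-function/quasitrace considerations: by Theorem~\ref{thm:C-B-H}, $M_k(\cP_n)$ is not stably properly infinite iff it admits a quasitrace, so it suffices to exhibit a quasitrace on $M_n(\cP_n)$ and show $M_{n+1}(\cP_n)$ is properly infinite directly. (3) Properly infinite for $k>n$: exhibit two isometries in $M_{n+1}(\cP_n)$ with orthogonal ranges, which I would do by writing $M_{n+1}(\cP_n)$ as a large corner of $M_M(\cP)$ with $M\ge 2\cdot(\text{rank of }p)$, and using that $M_2(\cP)$ properly infinite propagates: any unital \Cs{} containing a corner isomorphic to $M_M(\cP)$ with the relevant size inequality is properly infinite, via Lemma~\ref{lm:apprpi} and the two mutually orthogonal equivalent projections coming from proper infiniteness of $M_2(\cP)$ amplified.

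I expect the main obstacle to be pinning down the exact numerology so that the cutoff is sharp at $n$ rather than off by one or a factor of two — the trouble is that $\cP$ already has $M_2(\cP)$ properly infinite, so one cannot simply take $\cP_n=M_n(\cP)$ or a hereditary subalgebra thereof; instead one needs a Morita equivalence bimodule (equivalently a projection in $\cK\otimes\cP$) whose class $\langle p\rangle$ in $\Cu(\cP)$ is calibrated using the specific comparison structure of $\cP$. Making ``$M_k(\cP_n)$ finite for $k\le n$'' precise really requires knowing how the Cuntz semigroup of $\cP$ behaves, or alternatively constructing $\cP_n$ ad hoc (as in \cite{Ror:simple}, \cite{Ror:SP}) by an inductive-limit construction controlling the finiteness of amplifications stage by stage; I would lean on the latter, mimicking the construction of $\cP$ but building in $n$ ``levels'' of finiteness before properly infinite behaviour sets in. A secondary technical point is verifying that the resulting $\cP_n$ remains simple, separable and nuclear, which follows automatically from Morita equivalence with $\cP$ (for simplicity and nuclearity) and from $p$ lying in $\cK\otimes\cP$ (for separability of the corner).
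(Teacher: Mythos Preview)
Your step (2) fails outright, and this is the fatal gap. You propose to show $M_n(\cP_n)$ is finite by exhibiting a quasitrace on it via Theorem~\ref{thm:C-B-H}. But $\cP_n$ is to be Morita equivalent to $\cP$, and $M_2(\cP)$ is properly infinite, so $\cP_n$ is \emph{stably properly infinite}; by Theorem~\ref{thm:C-B-H} it admits no quasitrace whatsoever, and neither does any $M_k(\cP_n)$. You are conflating ``finite'' (no projection equivalent to a proper subprojection of itself) with ``not stably properly infinite'' (equivalently, ``admits a quasitrace''). The entire point of $\cP$ is that these notions differ: $\cP$ is finite yet stably properly infinite. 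So finiteness of $M_k(\cP_n)$ cannot be detected by traces or quasitraces, and your step (2) collapses. Your remaining suggestion --- redoing the inductive-limit construction of $\cP$ from scratch with $n$ levels built in --- might be made to work, but you give no details, and it is far heavier than what is needed.

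The paper's argument is short and uses only what is already on the table. Property (SP) of $\cP$ (via Lemma~\ref{lm:n}) gives a non-unital embedding $M_n(\C)\hookrightarrow\cP$; let $e$ be the image of a rank-one matrix unit and set $\cR=e\cP e$. Then $M_n(\cR)$ is (isomorphic to) a corner of $\cP$, hence finite because $\cP$ is. Since $\cR$ is Morita equivalent to $\cP$, some matrix algebra over $\cR$ is properly infinite; let $m\ge n$ be the largest integer with $M_m(\cR)$ finite, write $m=qn+r$ with $0\le r<n$, and put $\cP_n=M_q(\cR)$. Then $M_n(\cP_n)=M_{qn}(\cR)$ with $qn\le m$ is finite, while $M_{n+1}(\cP_n)=M_{q(n+1)}(\cR)$ is properly infinite. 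The ``numerology'' you flag as the main obstacle is handled by one application of the division algorithm; no Cuntz-semigroup calibration and no reconstruction of $\cP$ is required. The key idea you are missing is to pass to a \emph{small} corner of $\cP$ first (using (SP)) so that already $M_n$ of that corner sits inside $\cP$ and inherits finiteness, and only then adjust by a matrix amplification.
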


\begin{proof}  Let $n \ge 1$ be given, and let $\cP$ be as in Theorem~\ref{thm:P} above. Use Lemma~\ref{lm:n} to find an embedding $\varphi \colon M_n(\C) \to \cP$. Let $e = \varphi(e_{11})$, where $\{e_{ij}\}$ are matrix units for $M_n(\C)$, and set $\cR = e\cP e$. Then $M_n(\cR) = \varphi(1) \cP \varphi(1)$, which is finite, because $\cP$ is finite. As $\cR$ and $\cP$ are Morita equivalent and since $M_2(\cP)$ is properly infinite, some matrix algebra over $\cR$ is properly infinite. Let $m \ge n$ be the largest integer for which $M_m(\cR)$ is finite. Write $m = qn + r$, with $0 \le r < n$, and put $\cP_n = M_q(\cR)$. Then $M_n(\cP_n) = M_{qn}(\cR)$ is finite, while $M_{n+1}(\cR) = M_{q(n+1)}(\cR)$ is properly infinite.
\end{proof}

\begin{lemma} \label{lm:omega}
Let $\{\cA_n\}$ be a sequence of unital \Cs s, let $\omega$ be a free ultrafilter on $\N$, and let $k \ge 1$. Then $M_k(\prod_\omega \cA_n)$ is properly infinite if and only if the set $\{n \ge 1 : M_k(\cA_n) \; \text{properly infinite}\}$ belongs to $\omega$.
\end{lemma}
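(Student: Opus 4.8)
The plan is to establish both implications via the quantitative characterization of proper infiniteness provided by Lemma~\ref{lm:apprpi}, together with the standard identification of $M_k(\prod_\omega \cA_n)$ with $\prod_\omega M_k(\cA_n)$ and the fact that norms in an ultraproduct are computed as $\lim_\omega$ of the norms of any lift.

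First I would record the canonical isomorphism $M_k(\prod_\omega \cA_n) \cong \prod_\omega M_k(\cA_n)$, under which the class $\pi_\omega(\{y_n\})$ of a bounded sequence $\{y_n\} \in \prod_{n\ge 1} M_k(\cA_n)$ corresponds entrywise to the obvious element of $M_k(\prod_\omega \cA_n)$. This lets me work with $\prod_\omega M_k(\cA_n)$ throughout, and under it $\|\pi_\omega(\{y_n\})\| = \lim_\omega \|y_n\|$.

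For the ``if'' direction, suppose $S := \{n : M_k(\cA_n) \text{ properly infinite}\} \in \omega$. For each $n \in S$ pick isometries $s_1^{(n)}, s_2^{(n)} \in M_k(\cA_n)$ with orthogonal range projections, so that $(s_i^{(n)})^* s_j^{(n)} = \delta_{ij} 1$ exactly; for $n \notin S$ set these elements to be $0$ (or $1$), it does not matter. Both sequences are norm-bounded by $1$, hence define elements $a_1, a_2 \in \prod_\omega M_k(\cA_n)$, and since $\lim_\omega$ ignores the complement of any set in $\omega$, we get $a_i^* a_j = \delta_{ij} 1$ in the ultraproduct; in particular $\|a_i^* a_j - \delta_{ij} 1\| = 0 < 1/2$, so Lemma~\ref{lm:apprpi} yields that $\prod_\omega M_k(\cA_n) \cong M_k(\prod_\omega \cA_n)$ is properly infinite.

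For the ``only if'' direction, suppose $M_k(\prod_\omega \cA_n)$ is properly infinite. Fix isometries $t_1, t_2$ in it with orthogonal ranges; then $\|t_i^* t_j - \delta_{ij} 1\| = 0$. Lift $t_i$ to a bounded sequence $\{t_i(n)\}_n$ in $\prod_n M_k(\cA_n)$. Computing the norm in the ultraproduct gives $\lim_\omega \|t_i(n)^* t_j(n) - \delta_{ij} 1\| = 0$ for $i,j \in \{1,2\}$; hence the four sets $\{n : \|t_i(n)^* t_j(n) - \delta_{ij} 1\| < 1/2\}$ all lie in $\omega$, and so does their (finite) intersection. On that intersection $S'$, Lemma~\ref{lm:apprpi} applies to $t_1(n), t_2(n) \in M_k(\cA_n)$ and shows $M_k(\cA_n)$ is properly infinite. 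Since $S' \subseteq \{n : M_k(\cA_n) \text{ properly infinite}\}$ and $S' \in \omega$, the latter set is in $\omega$ as well.

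The only genuine subtlety — hardly an obstacle — is being careful that the identification $M_k(\prod_\omega \cA_n) \cong \prod_\omega M_k(\cA_n)$ is an isometric $*$-isomorphism (so that proper infiniteness transfers) and that lifts can be chosen bounded; both are routine facts about ultraproducts. Everything else is just the quantitative stability of proper infiniteness from Lemma~\ref{lm:apprpi} combined with the filter property that a finite intersection of sets in $\omega$ is again in $\omega$, and that $\lim_\omega$ of a bounded sequence of reals is unaffected by altering the sequence on the complement of a set in $\omega$.
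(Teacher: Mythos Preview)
Your proof is correct and follows essentially the same route as the paper: reduce via the isomorphism $M_k(\prod_\omega \cA_n)\cong\prod_\omega M_k(\cA_n)$, lift isometries with orthogonal ranges in each direction, use that ultraproduct norms are $\lim_\omega$ of coordinate norms, and invoke Lemma~\ref{lm:apprpi} on a set in $\omega$ where all the relevant norm estimates hold. The only cosmetic difference is that the paper reduces to $k=1$ before arguing, whereas you carry the $k$ throughout.
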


\begin{proof} Since $M_k(\prod_\omega \cA_n) \cong \prod_\omega M_k(\cA_n)$, it suffices to prove the lemma for $k=1$. It is clear that $\prod_\omega \cA_n$ is properly infinite if $\cA_n$ is properly infinite for all $n$ in a set belonging to $\omega$. Conversely, suppose that $\prod_\omega \cA_n$ is properly infinite, and take isometries $s,t \in \prod_\omega \cA_n$ with orthogonal ranges. Write $s = \pi_\omega(\{s_n\})$ and $t = \pi_\omega(\{t_n\})$, with $s_n,t_n \in \cA_n$. Then
$$\lim_{n \to \omega} \|s_n^*s_n -1\| = \lim_{n \to \omega} \|t_n^*t_n -1\|  = \lim_{n \to \omega} \|t_n^*s_n \|  = 0.$$
In particular, each of $\|s_n^*s_n -1\|,  \|t_n^*t_n -1\|, \|t_n^*s_n \|$ is less than $1/2$, for all $n$ in some subset of $\omega$, and $\cA_n$ is properly infinite for those $n$, by Lemma~\ref{lm:apprpi}.
\end{proof}

\noindent The following is an immediate consequence of Lemma~\ref{lm:omega} above:

\begin{theorem} \label{thm:exotic-traces}
Let $\cP_n$ be as in Theorem~\ref{thm:simple}. Then each $\cP_n$ is traceless,  while $\prod_\omega \cP_n$ does admits a quasitrace. 
\end{theorem}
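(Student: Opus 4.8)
The plan is to deduce this directly from Theorem~\ref{thm:simple} together with Lemma~\ref{lm:omega} and Theorem~\ref{thm:C-B-H}. First I would recall that, by Theorem~\ref{thm:simple}, each $\cP_n$ has the property that $M_k(\cP_n)$ is properly infinite for every $k > n$; in particular, taking $k = n+1$, the \Cs{} $\cP_n$ is stably properly infinite, hence by Theorem~\ref{thm:C-B-H} it admits no quasitrace, and a fortiori no tracial state. (Of course $\cP_n$ is also simple, separable and nuclear, being Morita equivalent to the \Cs{} $\cP$ of Theorem~\ref{thm:P}, although only tracelessness is asserted here.) So the first half of the statement is immediate.

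For the second half, the idea is to show that $\prod_\omega \cP_n$ is \emph{not} stably properly infinite and then invoke the ``if'' direction of Theorem~\ref{thm:C-B-H}. Fix $k \ge 1$. I want to show $M_k(\prod_\omega \cP_n)$ is not properly infinite. By Lemma~\ref{lm:omega}, $M_k(\prod_\omega \cP_n)$ is properly infinite precisely when the set $S_k := \{n \ge 1 : M_k(\cP_n) \text{ is properly infinite}\}$ lies in $\omega$. But by Theorem~\ref{thm:simple}, $M_k(\cP_n)$ is finite (hence not properly infinite) whenever $n \ge k$, so $S_k \subseteq \{1, \dots, k-1\}$ is a finite set. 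Since $\omega$ is a \emph{free} ultrafilter, it contains no finite sets, so $S_k \notin \omega$. Hence $M_k(\prod_\omega \cP_n)$ is not properly infinite. As $k$ was arbitrary, $\prod_\omega \cP_n$ is not stably properly infinite, and therefore by Theorem~\ref{thm:C-B-H} it admits a quasitrace.

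There is essentially no obstacle here: the argument is a bookkeeping exercise combining the three cited results, and the key point — already isolated in the statement preceding the theorem — is that ``not stably properly infinite'' is a genuinely weaker condition than ``stably finite'', so that the ultraproduct can fail to be properly infinite at every matrix level even though each $\cP_n$ is stably properly infinite. The only mild subtlety worth a sentence in the write-up is that one must quantify over all $k$ simultaneously: each individual $S_k$ being finite (and hence excluded from the free ultrafilter $\omega$) is exactly what is needed, and no diagonal or uniformity argument across $k$ is required since Theorem~\ref{thm:C-B-H} only asks that $M_k(\prod_\omega \cP_n)$ be non–properly–infinite for every $k$ separately. It is left open whether the resulting quasitrace on $\prod_\omega \cP_n$ is in fact a trace, since $\prod_\omega \cP_n$ is not known to be exact.
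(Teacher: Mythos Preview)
Your proof is correct and follows exactly the approach the paper intends: the paper simply records the theorem as ``an immediate consequence of Lemma~\ref{lm:omega}'', and your write-up spells out precisely that deduction, using Theorem~\ref{thm:simple} to see that each $S_k$ is finite and Theorem~\ref{thm:C-B-H} to translate between (non-)stable proper infiniteness and (quasi)traces. There is nothing to add.
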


\noindent
 We know that the ultraproduct $\prod_\omega \cP_n$ from the theorem above admits a quasitrace, but we do not know if it admits a trace. It seems likely that it does (see also Remark~\ref{rem:Kap} below), in which case we will have an example where $T_\omega(\{\cP_n\}) = \emptyset$ and $T(\prod_\omega \cP_n) \ne \emptyset$. 
 
On the other hand, if $T(\prod_\omega \cP_n) = \emptyset$, we would have a counterexample to Kaplansky's conjecture: a 2-quasitrace which is not a trace. In either case we have an example of a quasitrace which is not obviously a trace.

In \cite[Example 2.12]{APRT:ultrapower} and \cite[Example 3.11]{Robert:Lie} an example was given of a sequence of (non-simple) unital traceless \Cs s $\cB_n$ whose ultraproduct $\prod_\omega \cB_n$ admits a character, and hence a trace. A related example was established in \cite{RobRor:divisible}, proving the existence  of a sequence of simple infinite dimensional unital \Cs s (admitting tracial states) whose ultraproduct also admits a character. 
We do not know if our example in Theorem~\ref{thm:exotic-traces} --- as it stands --- also would admit a character (we guess not), but it is easy to see that this is not the case for a suitable choice of the sequence $\{\cP_n\}$. Indeed, using Theorem~\ref{thm:simple}, we can for each $n \ge 1$ choose a unital \Cs{} $\mathcal{Q}_n$ Morita equivalent to $\cP$ such that $M_k(\mathcal{Q}_n)$ is finite for $k \le 2^n n$, and properly infinite for $k > 2^n n$. Put $\cP_n = M_{2^n}(\mathcal{Q}_n)$. Then $\cP_n$ satisfies the condition of Theorem~\ref{thm:simple}, and so $\prod_\omega \cP_n$ admits a quasitrace, and it also contains the UHF-algebra of type $2^\infty$ as a unital sub-\Cs.

\begin{remark} \label{rem:Kap}
We shall here comment on the possibility of constructing a counterexample to Kaplansky's conjecture using ultraproducts as in Theorem~\ref{thm:exotic-traces} above. For each unital \Cs{} $\cA$, let $\mu(\cA)$ be the smallest integer $n \ge 1$, for which $M_n(\cA)$ is properly infinite, and set $\mu(\cA)=\infty$, if no such $n$ exists. Similarly, let $\nu(\cA)$ be the smallest integer $n \ge 1$ for which there exist $x_1, \dots, x_n \in \cA$ satisfying $\sum_{j=1}^n x_j^*x_j=1$ and $\|\sum_{j=1}^n x_jx_j^*\| \le 1/2$, and set $\nu(\cA)=\infty$ if no such finite set $x_1, \dots, x_n$ exists. Then $\cA$ admits a trace, respectively, a quasitrace if and only if $\nu(\cA)=\infty$, respectively, 
$\mu(\cA)=\infty$, by Theorems~\ref{thm:H-P} and \ref{thm:C-B-H}.

If $\{\cA_n\}_{n \ge 1}$ is a sequence of unital \Cs s such that $\mu(\cA_n)  \to \infty$, then $\prod_\omega \cA_n$ is not stably properly infinite, and hence admits a quasitrace, for any ultrafilter $\omega$. If, at the same time, $\nu(\cA_n)$ remains bounded, then $\prod_\omega \cA_n$ cannot admit a trace, and such a  --- hypothetical --- \Cs{} will therefore provide a counterexample to Kaplansky's conjecture.  A construction of a sequence of \Cs s with these properties is impossible precisely if there exists
a universal relation between $\mu$ and $\nu$ of the form $\mu(\cA) \le f(\nu(\cA))$, for all traceless unital \Cs s $\cA$, where $f \colon \N \to \N$ is a fixed function satisyfing $f(n) \to \infty$ as $n \to \infty$. 
The results of this section provide a class of (simple) \Cs s $\cA$, exhausing all possible values of $\mu(\cA)$, 
which could be candidates for providing a counterexample to Kaplansky's conjecture via an ultraproduct construction.
\end{remark}

\section{Almost traces}

\noindent We introduce in this section a notion of \emph{almost traces}, and we show that an ultraproduct of a sequence of \Cs s admits a trace if and only if the \Cs s in the sequence admit almost traces. The arguments are very similar to those of \cite[Section 8]{RobRor:divisible} addressing existence of characters on ultraproducts of \Cs s.

As in the previous section, let $\{\cA_n\}$ be a sequence of unital \Cs s and let $\omega$ be a free ultrafilter on $\N$. Analogous with the construction for traces in the previous section, let $S_\omega(\{\cA_n\}) \subseteq S(\prod_\omega \cA_n)$ denote the set of states $\rho$ on $\prod_\omega \cA_n$ which are limits of a sequence $\{\rho_n\}$ of states $\rho_n \in S(\cA_n)$ in the following sense:
\begin{equation*} 
\rho(\pi_\omega(\{x_n\})) = \lim_\omega \rho_n(x_n), \qquad \{x_n\} \in \prod_\omega \cA_n.
\end{equation*}
We denote the state $\rho$ by $\lim_\omega \rho_n$. The result below is a slight modification of \cite[Lemma 2.5]{Kir:abel}
 by Kirchberg.
 
\begin{proposition}[Kirchberg] \label{prop:Kir}
The set $S_\omega(\{\cA_n\})$ is weak$^*$ dense in $S(\prod_\omega \cA_n)$. 
\end{proposition}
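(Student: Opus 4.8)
The plan is to prove the density of $S_\omega(\{\cA_n\})$ in $S(\prod_\omega \cA_n)$ directly, by a weak$^*$-approximation argument on finitely many elements at a time. Let $\rho \in S(\prod_\omega \cA_n)$ be an arbitrary state, fix $\epsilon > 0$, and fix finitely many elements $y^{(1)}, \dots, y^{(m)} \in \prod_\omega \cA_n$; I want to produce a state $\sigma \in S_\omega(\{\cA_n\})$ with $|\sigma(y^{(i)}) - \rho(y^{(i)})| < \epsilon$ for all $i$. Lift each $y^{(i)}$ to a bounded sequence $\{y^{(i)}_n\}_{n \ge 1} \in \prod_{n \ge 1} \cA_n$ via a set-theoretic section of $\pi_\omega$ (or just pick any bounded lift), so that $\rho(y^{(i)}) = \lim_\omega$ of something — but note $\rho$ itself need not factor through the coordinate states, which is exactly the point of the proposition.

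The key device is the following: consider the finite-dimensional operator system (or the unital $*$-subspace) $E = \Span\{1, y^{(1)}, (y^{(1)})^*, \dots, y^{(m)}, (y^{(m)})^*\}$ inside $\prod_\omega \cA_n$, together with the restriction $\rho|_E$. I would approximate $\rho|_E$ by a convex combination of vector states, or more efficiently invoke that the state space of a $C^*$-algebra is the weak$^*$-closure of its finitely-supported approximations; but the cleanest route is to use the structure of $\prod_\omega \cA_n$ as a quotient of $\prod_n \cA_n$: the state $\rho \circ \pi_\omega$ is a state on $\prod_n \cA_n$ vanishing on $\cI_\omega$. Now I would use a Hahn–Banach / Arveson-type extension together with the observation that states on $\ell^\infty$-type products $\prod_n \cA_n$ that are "localized at $\omega$" can be pushed down to sequences of states. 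Concretely: for each $n$, choose $\rho_n \in S(\cA_n)$ so that $\rho_n$ nearly agrees with $\rho \circ \pi_\omega$ on the $n$-th coordinate data of a countable dense set of lifts — this is where one uses that each evaluation $\{x_n\} \mapsto$ (behavior near $\omega$) can be split coordinatewise. Then set $\sigma = \lim_\omega \rho_n$; by construction $\sigma \in S_\omega(\{\cA_n\})$, and one checks $\sigma$ agrees with $\rho$ on the chosen finite set $E$.

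Since this mimics \cite[Lemma 2.5]{Kir:abel} and the character-analogue in \cite[Section 8]{RobRor:divisible}, I would follow that template: first reduce to approximating $\rho$ on a single self-adjoint element $y = y^* \in \prod_\omega \cA_n$ with a lift $\{y_n\}$, $\|y_n\| \le \|y\|$; then observe that $\rho(\pi_\omega(\{y_n\})) \le \limsup_\omega \, (\text{sup of spectrum-type quantities of } y_n)$, and more precisely that for any $t > \rho(y)$ the sequence must have, on an $\omega$-large set of $n$, states $\rho_n$ on $\cA_n$ with $\rho_n(y_n)$ close to $\rho(y)$ — this follows because otherwise one could produce an element of $\cI_\omega$ on which $\rho$ does not vanish, contradicting well-definedness. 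Iterating over a countable weak$^*$-dense set of finite tuples and diagonalizing along $\omega$ yields a single sequence $\{\rho_n\}$ with $\lim_\omega \rho_n = \rho$.

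The main obstacle I anticipate is the diagonalization step: one must choose a \emph{single} sequence $\{\rho_n\}$ of states that simultaneously handles \emph{all} elements of $\prod_\omega \cA_n$, not just a fixed finite tuple, and $\prod_\omega \cA_n$ is generally non-separable, so a naive countable diagonal argument does not immediately suffice. The resolution — and the heart of Kirchberg's lemma — is that weak$^*$ density only requires matching $\rho$ on finite tuples, so it is enough to show that for \emph{every} finite tuple there \emph{exists} some sequence $\{\rho_n\}$ of states with $\lim_\omega \rho_n$ matching $\rho$ there; one does not need a universal sequence. Thus the real content is the single-tuple existence statement, which is an $\omega$-limit unpacking of the Hahn–Banach separation between $\rho$ and the convex set of "coordinatewise-realizable" functionals, and the subtlety is verifying that $\lim_\omega \rho_n$ is genuinely a \emph{state} (positive, unital) — unitality is automatic, positivity follows from each $\rho_n$ being a state and the fact that $\lim_\omega$ preserves positivity and the norm bound $1$.
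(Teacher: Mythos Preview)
Your proposal has the right shape but a real gap at the crucial step. You correctly note that weak$^*$-density amounts to: for every finite set $y^{(1)},\dots,y^{(m)}$ and $\epsilon>0$, produce some $\sigma\in S_\omega(\{\cA_n\})$ within $\epsilon$ of $\rho$ on each $y^{(i)}$. You then ``reduce to a single self-adjoint $y$'' and argue via spectra that for one $y$ there exist $\rho_n\in S(\cA_n)$ with $\lim_\omega\rho_n(y_n)=\rho(y)$. That single-element claim is fine, but nothing you wrote handles $m\ge 2$: the states $\rho_n$ you pick for $y^{(1)}$ have no reason to do anything for $y^{(2)}$, and your ``iterate and diagonalize along $\omega$'' does not repair this --- diagonalizing over separate sequences $\{\rho_n\}$ does not produce a single sequence good even on a fixed finite tuple. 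Your closing gesture at ``Hahn--Banach separation between $\rho$ and the convex set of coordinatewise-realizable functionals'' is in fact the correct fix, but you have not carried it out: one must first observe that $S_\omega(\{\cA_n\})$ is \emph{convex} (an easy one-line check you omit), so that if its weak$^*$-closure missed some $\rho$ there would be a single self-adjoint $y$ with $\rho(y)>\sup_{\sigma\in S_\omega}\sigma(y)$; your single-element observation then gives $\sup_{\sigma\in S_\omega}\sigma(y)\ge\lim_\omega\max\spek(y_n)\ge\max\spek(y)\ge\rho(y)$, a contradiction. With that reduction made explicit, your route does work and is a legitimate alternative.

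The paper's proof is organized quite differently. It also begins with the convexity of $S_\omega(\{\cA_n\})$, but uses it to reduce to \emph{pure} states $\rho$ rather than to invoke a separating functional. It then applies excision of pure states (Akemann--Anderson--Pedersen): given a finite $\mathcal{F}\subset\prod_\omega\cA_n$ and $\epsilon>0$, there is a positive norm-one $h\in\prod_\omega\cA_n$ with $\|h^{1/2}xh^{1/2}-\rho(x)h\|<\epsilon$ for all $x\in\mathcal{F}$. Lifting $h$ to positive norm-one $h_n\in\cA_n$ and choosing any $\rho_n\in S(\cA_n)$ with $\rho_n(h_n)=1$ yields $\sigma=\lim_\omega\rho_n\in S_\omega$ with $\sigma(h)=1$, whence $|\sigma(x)-\rho(x)|=|\sigma(h^{1/2}xh^{1/2}-\rho(x)h)|<\epsilon$ on $\mathcal{F}$. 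The excision device handles the entire finite tuple in one stroke, which is precisely where your argument stalled.
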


\noindent Observe that the statement above holds without any additional assumptions on the \Cs s $\cA_n$, unlike the analogous situation for traces discussed in Section~\ref{sec:ultraproduct}

\begin{proof} Note first that $S_\omega(\{\cA_n\})$ is convex. Indeed, if $\rho = \lim_\omega \rho_n$ and $\rho' = \lim_\omega \rho'_n$ belong to $S_\omega(\{\cA_n\}) $, then $t \rho + (1-t) \rho' = \lim_\omega (t \rho_n + (1-t) \rho'_n) \in S_\omega(\{\cA_n\})$, for $0 < t < 1$. Hence it suffices to show that each \emph{pure} state $\rho$ on $\prod_\omega \cA_n$ belongs to the weak$^*$ closure of $S_\omega(\{\cA_n\})$. 

Take a pure state $\rho$ on $\prod_\omega \cA_n$, let $\mathcal{F}$ be a finite subset of $\prod_\omega \cA_n$, and let $\ep >0$ be given. The pure state $\rho$ can be excised, \cite{AkeAndPed:excising}, so there exists a positive element $h \in \prod_\omega \cA_n$ satisfying $\|h\|=1$ and $\|h^{1/2}xh^{1/2}-\rho(x)h\|< \ep$, for all $x \in \mathcal{F}$. Write $h = \pi_\omega(\{h_n\})$, with $h_n \in \cA_n$ positive and $\|h_n\|=1$, and find states $\rho_n$ on $\cA_n$ such that $\rho_n(h_n) = \|h_n\|$. Put $\sigma = \lim_\omega \rho_n$. Then $\sigma(h) = \lim_\omega \rho_n(h_n) = 1$. It follows that 
$$|\sigma(x) - \rho(x)| =  \big|\sigma\big(h^{1/2}xh^{1/2} - \rho(x)h\big)\big| \le \| h^{1/2}xh^{1/2} - \rho(x)h\| < \ep,$$
for all $x \in \mathcal{F}$. This shows that $\rho$ belongs to the weak$^*$ closure of  $S_\omega(\{\cA_n\})$.
\end{proof}

\begin{remark} \label{rem:traceless}
In Corollary~\ref{cor:B} we noted that to each state $\rho$ on a unital traceless \Cs{} $\cA$ there exists a sequence $\{x_n\}$ in $\cA$ such that $\rho(x_n^*x_n)=1$, while $\rho(x_nx_n^*) \to 0$. One could consider the (formally) stronger property of a state that this happens for a \emph{bounded} sequence $\{x_n\}$. If all states on each \Cs{} in a sequence $\{\cA_n\}$ have this stronger property, then the same holds for all states in the weak$^*$ dense subset $S_\omega(\{\cA_n\})$, whence no state in $S_\omega(\{\cA_n\})$ could be a trace.  This would not rule out the existence of a trace on $\prod_\omega \cA_n$. Still, we expect that the sequences $\{x_n\}$ in $\cA$ above, witnessing tracelessness, cannot always be chosen to be bounded.
\end{remark}

\begin{definition} A unital \Cs{} $\cA$ is said to have \emph{$(N,\ep)$-almost traces} if for each set $x_1, \dots, x_N$ of contractions in $\cA$ there exists a state $\rho$ on $\cA$ such that
$$|\rho(x_j^*x_j - x_jx_j^*)| \le \ep, \qquad j=1,2,\dots, N.$$
\end{definition}

\begin{lemma} \label{lm:almost-trace}
A unital \Cs{} $\cA$ has a tracial state if and only if it has $(N,\ep)$-almost traces for all $N \ge 1$ and all $\ep >0$. 
\end{lemma}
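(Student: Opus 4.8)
The plan is to prove both implications, the forward one being essentially trivial and the reverse one requiring a compactness argument.

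For the ``only if'' direction, suppose $\cA$ has a tracial state $\tau$. Then for any contractions $x_1, \dots, x_N \in \cA$ we have $\tau(x_j^*x_j - x_jx_j^*) = 0$ for every $j$, so taking $\rho = \tau$ witnesses that $\cA$ has $(N,\ep)$-almost traces for every $N$ and every $\ep > 0$. This requires no work.

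For the ``if'' direction, I would argue by contradiction using Corollary~\ref{cor:B} (or equivalently Corollary~\ref{cor:A}). Suppose $\cA$ has no tracial state. By Corollary~\ref{cor:B}, for \emph{every} state $\rho$ on $\cA$ and every $\delta > 0$ there is an element $x \in \cA$ with $\rho(x^*x) = 1$ and $\rho(xx^*) \le \delta$. The difficulty is that this $x$ need not be a contraction (indeed Remark~\ref{rem:traceless} flags exactly this point), so I cannot directly feed it into the definition of almost traces. Instead I would use Theorem~\ref{thm:H-P}: condition (i) fails, so by (i)$\Rightarrow$(iii), for each $\delta > 0$ there exist $n \ge 2$ and $x_1, \dots, x_n \in \cA$ with $\sum_{j=1}^n x_j^*x_j = 1_\cA$ and $\|\sum_{j=1}^n x_j x_j^*\| \le \delta$. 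Note that each $x_j$ is then automatically a contraction, since $x_j^*x_j \le \sum_i x_i^*x_i = 1_\cA$. Now fix $\delta$ and the corresponding $x_1, \dots, x_n$, and let $\rho$ be \emph{any} state on $\cA$. Then $\sum_{j=1}^n \rho(x_j^*x_j) = 1$ while $\sum_{j=1}^n \rho(x_j x_j^*) \le \|\sum_j x_j x_j^*\| \le \delta$, so $\sum_{j=1}^n \rho(x_j^*x_j - x_j x_j^*) \ge 1 - \delta$. Hence for \emph{at least one} index $j_0$ we have $\rho(x_{j_0}^* x_{j_0} - x_{j_0} x_{j_0}^*) \ge (1-\delta)/n$. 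Choosing $\delta < 1/2$ (say), this shows: there is a finite set $x_1, \dots, x_n$ of contractions in $\cA$ such that every state $\rho$ fails the almost-trace inequality for at least one $j$ with the fixed threshold $\ep = (1-\delta)/n = 1/(2n)$. Thus $\cA$ does not have $(n, 1/(2n))$-almost traces, contradicting the hypothesis.

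The main obstacle, as noted, is the boundedness of the elements: one must be careful to invoke the version of Haagerup--Pop (condition (iii)) in which $\sum x_j^* x_j = 1_\cA$ exactly, rather than Corollary~\ref{cor:A}/\ref{cor:B} where norm control on $x$ is unavailable. Once that is in place, the rest is a one-line averaging/pigeonhole estimate. It is worth remarking explicitly in the proof that this is why the definition of $(N,\ep)$-almost traces, restricted to contractions, is still strong enough to characterize traciality. I would also mention that the proof shows something slightly stronger: if $\cA$ is traceless, then there is a single $N$ (one can take $N = \nu(\cA)$ in the notation of Remark~\ref{rem:Kap}) and a single $\ep > 0$ for which $(N,\ep)$-almost traces fail, so the quantifier ``for all $N$ and all $\ep$'' in the statement can be read as ``for all sufficiently large $N$ and the associated $\ep \sim 1/N$''.
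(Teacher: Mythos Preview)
Your argument is correct, but it takes a genuinely different route from the paper's. The paper proves the ``if'' direction directly and self-containedly: for each finite set $\mathcal{F}$ of contractions and each $\ep>0$ it picks a witnessing state $\rho_{\mathcal{F},\ep}$, forms the net $\{\rho_{\mathcal{F},\ep}\}$ (ordered by inclusion on $\mathcal{F}$ and reverse order on $\ep$), and takes a weak$^*$ accumulation point $\rho$ in $S(\cA)$; a one-line tail-set argument then shows $\rho(x^*x)=\rho(xx^*)$ for every contraction $x$, so $\rho$ is a trace. No appeal to Theorem~\ref{thm:H-P} is made.

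Your contrapositive via Haagerup--Pop is valid and has the merit of being quantitative: it produces, for a traceless $\cA$, an explicit pair $(N,\ep)$ (with $N$ the $n$ from condition~(iii) and $\ep$ any number strictly below $(1-\delta)/n$) at which almost traces fail, and your remark linking $N$ to $\nu(\cA)$ is apt. The cost is that you are invoking a substantially deeper theorem (Haagerup's passage through $\cA^{**}$) to prove what is, in the paper's treatment, a soft compactness fact. One small wording fix: since the pigeonhole gives $\rho(x_{j_0}^*x_{j_0}-x_{j_0}x_{j_0}^*)\ge (1-\delta)/n$ and the definition uses $\le \ep$, you should take $\ep$ strictly less than $(1-\delta)/n$ rather than equal to it.
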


\begin{proof} The ``only if'' part is clear. Assume that for each finite set $\mathcal{F}$ of the unit ball of $\cA$ and each $\ep >0$ we can find a state $\rho_{\mathcal{F},\ep}$ on $A$ such that $|\rho_{\mathcal{F},\ep}(x^*x-xx^*)| < \ep$ for all $x \in \mathcal{F}$. Let $\rho$ be an 
an accumulation point of the net $\big\{\rho_{\mathcal{F},\ep}\big\}$. Let $x \in \cA$ be a contraction and let $\ep > 0$. As $\rho$ by assumption belongs to the closure of the set $\{\rho_{\mathcal{F},\ep'} : x \in \mathcal{F}, \; \ep' \le \ep\}$ we conclude that $|\rho(x^*x-xx)|<\ep$. This shows that $\rho$ is a tracial state on $\cA$.
\end{proof}

\begin{proposition} \label{prop:UH}
Let $\{\cA_n\}$ be a sequence of unital \Cs s, and let $\omega$ be a free ultrafilter on $\N$. The ultraproduct \Cs{} $\prod_\omega \cA_n$ admits a tracial state if and only if 
$$I_{N,\ep}:=\{n \in \N : \cA_n \; \text{admits $(N,\ep)$-almost traces} \, \} \in \omega,$$
for each integer $N \ge 1$ and each $\ep >0$.
\end{proposition}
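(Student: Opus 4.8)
The plan is to prove the two implications separately, using Lemma~\ref{lm:almost-trace} to reduce the existence of a tracial state on $\prod_\omega \cA_n$ to the existence of $(N,\ep)$-almost traces on $\prod_\omega \cA_n$ for all $N$ and $\ep$, and then transferring the $(N,\ep)$-almost trace property back and forth between $\prod_\omega \cA_n$ and the sequence $\{\cA_n\}$. The main point is the following claim: \emph{$\prod_\omega \cA_n$ has $(N,\ep)$-almost traces if and only if $I_{N,\ep'} \in \omega$ for every $\ep' > \ep$} (a small loss in $\ep$ will appear and must be absorbed by quantifying over all $\ep>0$).

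First I would prove the ``if'' direction. Suppose $I_{N,\ep} \in \omega$ for all $N\ge 1$, $\ep>0$; I claim $\prod_\omega \cA_n$ has $(N,\ep)$-almost traces for all $N,\ep$, and then Lemma~\ref{lm:almost-trace} finishes. Fix contractions $y_1,\dots,y_N \in \prod_\omega \cA_n$ and $\ep>0$. Lift each $y_j$ to a sequence $\{x_j(n)\}_n$ of contractions in $\cA_n$ (one can always arrange the lifts to be contractions, by functional calculus on the norm). For each $n$ in the set $I_{N,\ep/2} \in \omega$, pick a state $\rho_n$ on $\cA_n$ with $|\rho_n(x_j(n)^*x_j(n) - x_j(n)x_j(n)^*)| \le \ep/2$ for all $j$; for $n \notin I_{N,\ep/2}$ choose $\rho_n$ arbitrarily. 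Let $\rho = \lim_\omega \rho_n \in S_\omega(\{\cA_n\})$; since $I_{N,\ep/2}\in\omega$, a $\lim_\omega$ computation gives $|\rho(y_j^*y_j - y_jy_j^*)| = \lim_\omega |\rho_n(\cdots)| \le \ep/2 \le \ep$. Hence $\rho$ witnesses the $(N,\ep)$-almost trace property for $y_1,\dots,y_N$.

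For the ``only if'' direction, suppose $\prod_\omega\cA_n$ admits a tracial state $\tau$, and suppose toward a contradiction that $I_{N,\ep} \notin \omega$ for some $N,\ep$; then its complement $J := \N\setminus I_{N,\ep} \in \omega$, i.e., for each $n\in J$ there exist contractions $x_1(n),\dots,x_N(n)\in\cA_n$ such that \emph{every} state $\rho$ on $\cA_n$ fails $|\rho(x_j(n)^*x_j(n)-x_j(n)x_j(n)^*)|\le\ep$ for at least one $j$; in particular $\max_j \|x_j(n)^*x_j(n)-x_j(n)x_j(n)^*\| > \ep$ (evaluate at a state attaining the norm of the self-adjoint element). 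Define $y_j := \pi_\omega(\{x_j(n)\}_n)$, contractions in $\prod_\omega\cA_n$. Since $J\in\omega$, we get $\max_j \|y_j^*y_j - y_jy_j^*\| = \lim_\omega \max_j\|\cdots\| \ge \ep$, so some $y_{j_0}$ has $\|y_{j_0}^*y_{j_0}-y_{j_0}y_{j_0}^*\| \ge \ep > 0$. But $\tau$ being a trace does not immediately contradict this, so instead I would argue via almost traces on $\prod_\omega\cA_n$: the tracial state $\tau$ shows (trivially) that $\prod_\omega\cA_n$ has $(N,\ep)$-almost traces, hence for these very contractions $y_1,\dots,y_N$ there is a state $\sigma$ with $|\sigma(y_j^*y_j-y_jy_j^*)|\le \ep/2$ for all $j$. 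By Proposition~\ref{prop:Kir}, $S_\omega(\{\cA_n\})$ is weak\({}^*\) dense in $S(\prod_\omega\cA_n)$, so we may take $\sigma = \lim_\omega\sigma_n$ approximating to within, say, $\ep/4$ on the finite set $\{y_j^*y_j - y_jy_j^*\}$; then $\lim_\omega |\sigma_n(x_j(n)^*x_j(n)-x_j(n)x_j(n)^*)| \le 3\ep/4 < \ep$, so the set of $n$ with $|\sigma_n(x_j(n)^*x_j(n)-x_j(n)x_j(n)^*)|<\ep$ for all $j$ lies in $\omega$ and in particular meets $J$, contradicting the defining property of the $x_j(n)$, $n\in J$.

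The main obstacle I expect is bookkeeping the $\ep$-losses cleanly: the weak\({}^*\) approximation in Proposition~\ref{prop:Kir} and the passage through $\lim_\omega$ each introduce slack, and one must be careful that the ``$(N,\ep)$-almost trace'' thresholds are chosen (e.g.\ $\ep/2$, $\ep/4$) so that the final inequality is strictly less than $\ep$, yielding a genuine contradiction with a set in $\omega$; also one should note at the outset that lifts of contractions can be taken to be contractions, and that for a self-adjoint element $a$ there is always a state with $|\rho(a)| = \|a\|$, which is what converts the ``every state fails'' hypothesis into a norm lower bound. None of these steps is deep; the only subtlety is that there is no ``$\prod_\omega$ of almost traces'' object to manipulate directly, so one routes everything through states on $\prod_\omega\cA_n$ and the density result, exactly as in \cite[Section 8]{RobRor:divisible}.
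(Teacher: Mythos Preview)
Your proposal is correct and follows essentially the same route as the paper: in the ``if'' direction you lift contractions, pick witnessing states $\rho_n$ on the set in $\omega$, and form $\lim_\omega\rho_n$; in the ``only if'' direction you choose bad contractions on $\N\setminus I_{N,\ep}$, push them into the ultraproduct, and use Proposition~\ref{prop:Kir} to approximate a tracial state by some $\lim_\omega\sigma_n$, obtaining a contradiction. Two small simplifications worth noting: in the ``if'' direction you may use $I_{N,\ep}$ itself rather than $I_{N,\ep/2}$ (the ultralimit preserves the non-strict inequality $\le\ep$), and in the ``only if'' direction the detour through the norm lower bound and through an auxiliary $\sigma$ is unnecessary---the paper simply takes $\sigma=\tau$, for which $\tau(y_j^*y_j-y_jy_j^*)=0$, and then approximates $\tau$ directly via Proposition~\ref{prop:Kir} to within $\ep/2$.
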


\begin{proof} 
``Only if''. Let $\tau$ be a tracial state on $\prod_\omega \cA_n$. Let $(N,\ep)$ be given. For each $n \in \N \setminus I_{N,\ep}$, choose contractions $x_1(n), \dots, x_N(n) \in \cA_n$ such that no state $\rho$ on $\cA_n$ satisfies $|\rho\big(x_j(n)^*x_j(n)-x_j(n)x_j(n)^*\big)|\le \ep$, for all $j=1,\dots, N$. Choose arbitrary contractions $x_j(n) \in \cA_n$, for $n \in I_{N,\ep}$ and $j=1,\dots,N$, and set $x_j = \pi_\omega(\{x_j(n)\}) \in \prod_\omega \cA_n$. 

By Proposition~\ref{prop:Kir} we can find states $\rho_n$ on $\cA_n$ such that 
$$|(\lim_\omega \rho_n)(x_j^*x_j) - \tau(x_j^*x_j)| < \ep/2, \qquad |(\lim_\omega \rho_n)(x_jx_j^*) - \tau(x_jx_j^*)| < \ep/2,$$ for all $j=1,\dots, N$. Hence 
$$\lim_\omega |\rho_n\big(x_j(n)^*x_j(n)-x_j(n)x_j(n)^*\big)| = |(\lim_\omega \rho_n)(x_j^*x_j-x_jx_j^*) | < \ep,$$ for all $j$. This entails that $\N \setminus I_{N,\ep}$ does not belong to $\omega$, so $I_{N,\ep} \in \omega$ as desired.

``If''. By Lemma~\ref{lm:almost-trace} it suffices to show that $\prod_\omega \cA_n$ admits $(N,\ep)$-almost traces for all $(N,\ep)$. Accordingly, let $x_1, \dots, x_n$ be contractions in $\prod_\omega \cA_n$, and write $x_j = \pi_\omega(\{x_j(n)\}_{n \ge 1})$ with $x_j(n) \in \cA_n$ contractions. For each $n \in I_{N,\ep}$, we can find a state $\rho_n$ on $\cA_n$ such that $|\rho_n(x_j(n)^*x_j(n)-x_j(n)x_j(n)^*)| \le \ep$, for $j=1,2, \dots, N$. Choose arbitrary states $\rho_n \in S(\cA_n)$, for $n \notin I_{N,\ep}$, and set $\rho = \lim_\omega \rho_n$. Then
$$|\rho(x_j^*x_j-x_jx_j^*)| = \lim_\omega |\rho_n(x_j(n)^*x_j(n)-x_j(n)x_j(n)^*)| \le \ep,$$
for $j=1,2, \dots, N$.
This shows that $\prod_\omega \cA_n$ admits $(N,\ep)$-almost traces.
\end{proof}

 \section{Faithful traces}
 
 \noindent In this last section we shall consider when a unital \Cs{} admits a \emph{faithful} tracial state, and also the stronger condition that each \emph{quotient} of the \Cs{} admits a tracial state.  The property that each quotient of the \Cs{} admits a tracial state is denoted QTS, and was considered by Murphy, \cite{Murphy:traces}, see also recent application for amenability of the unitary group of the \Cs{} in \cite{AST:amenable} and \cite{Ozawa:amenable}.

We begin by noting the following well-known characterization of \Cs s with a faithful trace. The equivalence of (i) and (ii) was obtained in  \cite[Theorem 3.4]{CuntzPed-JFA} (we give a different proof here). For each trace $\tau$ on a \Cs{} $\cA$ consider the its trace-kernel ideal $I_\tau =  \{ x \in \cA : \tau(x^*x) = 0\}$.

\begin{proposition} \label{prop:faithful}
The following three conditions are equivalent for every unital separable \Cs{} $\cA$:
\begin{enumerate}
\item $\cA$ has a faithful tracial state,
\item $\cA$ has a separating family of tracial states,
\item each non-zero closed two-sided ideal in $\cA$ admits a non-zero positive bounded trace.
\end{enumerate}
\end{proposition}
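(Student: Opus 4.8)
The plan is to prove the cycle (i) $\Rightarrow$ (ii) $\Rightarrow$ (iii) $\Rightarrow$ (i), using separability crucially in the last implication.

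\medskip

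\emph{(i) $\Rightarrow$ (ii).} This is immediate: a single faithful tracial state is by itself a separating family, since $\tau(a^*a) = 0$ with $\tau$ faithful forces $a = 0$.

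\medskip

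\emph{(ii) $\Rightarrow$ (iii).} Let $J \lhd \cA$ be a non-zero closed two-sided ideal, and pick $0 \ne a \in J$ with $a \ge 0$. By the separating property there is a tracial state $\tau$ on $\cA$ with $\tau(a) > 0$. Restricting $\tau$ to $J$ gives a bounded positive trace on $J$ which is non-zero (it does not vanish on $a$). So (iii) holds, with no separability needed.

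\medskip

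\emph{(iii) $\Rightarrow$ (i).} This is the substantive implication and where separability enters. First I would reduce to producing, for each non-zero positive $a \in \cA$, a tracial \emph{state} $\tau_a$ on $\cA$ with $\tau_a(a) > 0$: given the hypothesis, apply (iii) to the ideal $\overline{\cA a \cA}$ generated by $a$ to get a non-zero positive bounded trace $\sigma$ on it; normalize and extend $\sigma$ to a tracial state on $\cA$ (a positive trace on an ideal extends to a positive trace on the whole algebra, e.g.\ via the bidual or by a Hahn--Banach/positivity argument, and $\sigma$ can be arranged not to vanish at some translate of $a$ inside the ideal, hence $\tau_a(a) > 0$ after a further small adjustment). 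Second, using separability, choose a dense sequence $\{a_k\}_{k \ge 1}$ in the positive part of the unit ball of $\cA$ with each $a_k \ne 0$, take the corresponding tracial states $\tau_{a_k}$, and set $\tau = \sum_{k \ge 1} 2^{-k} \tau_{a_k}$. This is a tracial state, and I claim it is faithful: if $\tau(b^*b) = 0$ then $\tau_{a_k}(b^*b) = 0$ for all $k$; by the Cauchy--Schwarz inequality for the (positive semidefinite) sesquilinear form $(x,y) \mapsto \tau_{a_k}(y^*x)$ this forces $\tau_{a_k}(xb^*b) = 0$ and in fact $\tau_{a_k}$ vanishes on the hereditary cone generated by $b^*b$; since $\{a_k\}$ is dense and each $\tau_{a_k}$ is continuous, if $b^*b \ne 0$ we may pick $a_k$ close enough to $b^*b/\|b^*b\|$ with $\tau_{a_k}(b^*b) > 0$, a contradiction. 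Hence $b = 0$ and $\tau$ is faithful.

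\medskip

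The main obstacle is the extension step inside (iii) $\Rightarrow$ (i): passing from a non-zero positive bounded trace on an ideal $J$ to a tracial state on $\cA$ that is genuinely non-zero on the prescribed element, and doing so while keeping control (continuity, normalization) uniformly enough that the $2^{-k}$-series argument goes through. One clean way to handle it is to note that a positive trace on $J$ extends uniquely to a normal positive trace on $J^{**}$, which sits as a (complemented) weak$^*$-closed ideal $z\cA^{**}$ of $\cA^{**}$ for a central projection $z$; composing with $\cA \hookrightarrow \cA^{**} \to z\cA^{**}$ yields a positive trace on $\cA$, and its value on $a$ is positive because $a$ has non-zero image in $z\cA^{**}$ (as $\sigma$ was non-zero on $\overline{\cA a \cA}$, which is weak$^*$-dense in $z\cA^{**}$). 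After normalizing we obtain the desired $\tau_a$. Care is only needed to ensure $\tau_a(a)>0$ rather than merely $\tau_a \ne 0$; this is arranged by choosing $\sigma$ at the outset so that $\sigma(a) > 0$, which is possible since $a$ is a non-zero positive element of the ideal to which (iii) is applied.
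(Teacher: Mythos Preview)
Your overall strategy is sound, but the faithfulness argument at the end of (iii) $\Rightarrow$ (i) has a genuine gap. You choose $\tau_{a_k}$ with $\tau_{a_k}(a_k)>0$ and form $\tau=\sum 2^{-k}\tau_{a_k}$; then, given $c\ge 0$ with $\tau(c)=0$, you want to pick $a_k$ close to $c/\|c\|$ and conclude $\tau_{a_k}(c)>0$. But continuity only gives $|\tau_{a_k}(c/\|c\|)-\tau_{a_k}(a_k)|\le \|c/\|c\|-a_k\|$, so you would need $\|c/\|c\|-a_k\|<\tau_{a_k}(a_k)$, and there is no lower bound on $\tau_{a_k}(a_k)$. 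Concretely, in $\cA=C([0,1])$ the strictly positive contractions are dense in the positive unit ball; take $\{a_k\}$ to consist of such functions and set $\tau_{a_k}=\delta_0$ for every $k$ (legitimate, since $a_k(0)>0$). Then $\tau=\delta_0$, which is not faithful. So the bare condition $\tau_{a_k}(a_k)>0$ is too weak to force faithfulness of the series.

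The paper repairs exactly this point by making the choice of trace \emph{quantitative}: given $a\ge 0$ with $\|a\|=1$, one applies the separating hypothesis not to $a$ but to $g(a)$, where $g$ vanishes on $[0,1/2]$ and $g(1)=1$. A trace $\tau$ with $\tau(g(a))>0$ then satisfies $\|a+I_\tau\|>1/2$, a robust condition that survives the density argument: if $\|a_n+I_{\tau_n}\|>1/2$ for a dense sequence, then $\|a+I_\tau\|\ge 1/2$ for \emph{every} positive norm-one $a$, where $\tau=\sum 2^{-n}\tau_n$ and $I_\tau=\bigcap I_{\tau_n}$. (The paper also organizes things as (iii) $\Rightarrow$ (ii) $\Rightarrow$ (i), handling (iii) $\Rightarrow$ (ii) by the one-line observation that $\bigcap_{\tau\in T(\cA)} I_\tau$ admits no bounded trace and hence is zero; this sidesteps your bidual extension step, though your extension argument is fine.) Incidentally, your claim that a non-zero bounded trace on $\overline{\cA a\cA}$ must be non-zero on $a$ is correct---its trace-kernel would otherwise contain $a$ and hence the whole ideal---but the reason you give (``since $a$ is a non-zero positive element'') is not the argument.
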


\begin{proof} (i) $\Rightarrow$ (ii) $\Rightarrow$ (iii) are trivial.

(ii) $\Rightarrow$ (i). We first show that for each non-zero positive element $a \in \cA$ there exists a tracial state $\tau$ on $\cA$ such that $\|a+I_\tau\| > \|a\|/2$, where $I_\tau$ is the trace-kernel ideal defined above. We may assume that $\|a\| = 1$. Let $g \colon [0,1] \to [0,1]$ be a continuous function which is zero on the interval $[0,1/2]$ and with $g(1) = 1$. Then $g(a)$ is positive and non-zero, so by assumption there is a tracial state $\tau$ on $\cA$ which is non-zero on $g(a)$. It follows that $g(a+I_\tau) = g(a) + I_\tau \ne 0$. This entails that $\|a+I_\tau\| > 1/2$.

Let now $\{a_n\}_{n=1}^\infty$ be a countable dense subset of the set of positive elements in $\cA$ of norm 1. For each $n$ choose a tracial state $\tau_n$ such that $\|a_n+ I_{\tau_n}\| > 1/2$. Set $\tau = \sum_{n=1}^\infty 2^{-n} \tau_n$. Then $\tau$ is a tracial state on $\cA$ and $I_\tau = \bigcap_{n=1}^\infty I_{\tau_n}$. It follows that $\|a_n+I_\tau\| > 1/2$ for all $n$. This implies that $\|a+I_\tau\| \ge 1/2$, for all positive elements $a \in \cA$ with $\|a\|=1$, proving that $\tau$ is faithful. 

(iii) $\Rightarrow$ (ii). Each trace on $\cA$ vanishes on the ideal $I_0 = \bigcap_{\tau \in T(\cA)} I_\tau$. Hence, $I_0$ does not admit any positive bounded trace, since any such would extend to a positive trace on $\cA$, so $I_0$ must be zero. 
\end{proof}

\noindent The conclusions of Proposition~\ref{prop:faithful} do not hold without the assumption of separability. If a \Cs{} admits a faithful tracial state, then it cannot contain an uncountable family of pairwise orthogonal non-zero positive elements, but such a \Cs{}  can admit a separating family of tracial states. This is for example the case for the \Cs{} $\ell^\infty(X)$, or the unitization of $c_0(X)$, where $X$ is an uncountable set. 

The following is an immediate corollary to Proposition~\ref{prop:faithful}:

\begin{corollary} Every separable RFD \Cs{} admits a faithful tracial state.
\end{corollary}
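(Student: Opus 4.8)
The statement to prove is: \emph{Every separable RFD \Cs{} admits a faithful tracial state.}

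\medskip

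Recall that RFD (residually finite-dimensional) means that the \Cs{} $\cA$ has a separating family of finite-dimensional representations $\{\pi_i \colon \cA \to M_{k_i}(\C)\}_{i \in I}$; that is, for each non-zero $a \in \cA$ there is an index $i$ with $\pi_i(a) \ne 0$. The plan is simply to check that such an algebra satisfies condition (ii) of Proposition~\ref{prop:faithful} and then invoke that proposition.

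\medskip

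First I would observe that each finite-dimensional representation supplies tracial states on $\cA$ by pullback. Concretely, for an index $i$, let $\tau_{k_i}$ denote the normalized trace on $M_{k_i}(\C)$ (so $\tau_{k_i}(1) = 1$), and set $\sigma_i := \tau_{k_i} \circ \pi_i$. Since $\pi_i$ is a unital $^*$-homomorphism (if $\cA$ is unital; otherwise compose with the canonical extension to unitizations, or just note $\sigma_i$ is a positive tracial functional which can be normalized) and $\tau_{k_i}$ is a tracial state, $\sigma_i$ is a tracial state on $\cA$. The key point is that $\sigma_i$ is \emph{faithful on the image}: if $\pi_i(a) \ne 0$ then $\pi_i(a^*a) = \pi_i(a)^*\pi_i(a)$ is a non-zero positive matrix, so $\tau_{k_i}(\pi_i(a^*a)) > 0$, i.e. $\sigma_i(a^*a) > 0$.

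\medskip

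Next I would assemble the separating family. Given any non-zero $a \in \cA$, pick $i$ with $\pi_i(a) \ne 0$; then $\sigma_i(a^*a) > 0$ by the previous paragraph. Hence $\{\sigma_i\}_{i \in I}$ is a separating family of tracial states on $\cA$, which is exactly condition (ii) of Proposition~\ref{prop:faithful}. Applying that proposition (here is where the separability hypothesis on $\cA$ is used, to pass from a separating family to a single faithful trace via the countable-dense-subset argument) yields a faithful tracial state on $\cA$, completing the proof.

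\medskip

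There is essentially no obstacle: the only mild point to be careful about is the non-unital case, where one should either note that the construction still produces faithful positive tracial functionals which normalize to states, or reduce to the unital setting by passing to a unitization and restricting, observing that a faithful trace on $\widetilde{\cA}$ restricts to a faithful trace on the ideal $\cA$. Everything else is a direct quotation of Proposition~\ref{prop:faithful}.
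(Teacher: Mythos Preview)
Your proof is correct and follows exactly the approach the paper has in mind: the paper simply states this as an immediate corollary to Proposition~\ref{prop:faithful}, and verifying condition~(ii) via the pulled-back normalized matrix traces along a separating family of finite-dimensional representations is precisely how one unpacks that immediacy. Your remark on the non-unital case is a reasonable extra care, though the paper is implicitly working in the unital setting throughout.
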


\noindent We mention also the following nice characterization from \cite[Theorem 3.4]{CuntzPed-JFA} of \Cs s admitting a separating family of traces:

\begin{theorem}[Cuntz--Pedersen]
A unital \Cs{} $\cA$ admits a separating family of traces if and only if for all sequences $\{x_n\}_{n \ge 1}$ in $ \cA$ satisfying $\sum_{j=1}^\infty x_jx_j^* \le \sum_{j=1}^\infty x_j^*x_j$ (with both sums being norm convergent) we have equality $\sum_{j=1}^\infty x_jx_j^* = \sum_{j=1}^\infty x_j^*x_j$.
\end{theorem}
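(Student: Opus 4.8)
The plan is to prove both implications directly, using the characterization of tracelessness via Theorem~\ref{thm:H-P} (specifically condition (iii)) together with the Cuntz--Pedersen machinery on the functional $\|\cdot\|_T$ and the "difference ideal" generated by elements of the form $x^*x - xx^*$. Recall that for a unital \Cs{} $\cA$ one sets $\|a\|_T = \inf\{\|\sum_j x_j^*x_j\| : a = \sum_j (x_j^*x_j - x_jx_j^*)\}$ (with the convention $\|a\|_T = \infty$ if $a$ admits no such representation), and that $\cA$ admits a trace vanishing on a given subset $S \subseteq \cA_{\mathrm{sa}}$ precisely when a suitable semicontinuous seminorm estimate holds; the quantity $\sum_j x_j^*x_j - \sum_j x_jx_j^*$ measures exactly the obstruction. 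A separating family of traces exists iff the only self-adjoint element $a$ with $\tau(a) = 0$ for all $\tau \in T(\cA)$ that is also "trace-positive" is $a = 0$; equivalently, iff whenever $a \ge 0$ lies in the closed real span of $\{x^*x - xx^* : x \in \cA\}$, one has $a = 0$.

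For the forward direction, assume $\cA$ has a separating family of traces and suppose $\{x_n\}$ satisfies $\sum x_j x_j^* \le \sum x_j^* x_j$ with both sums norm-convergent. Put $a = \sum_j (x_j^* x_j - x_j x_j^*) \ge 0$. For every trace $\tau \in T(\cA)$, continuity and the trace property give $\tau(a) = \sum_j \tau(x_j^* x_j - x_j x_j^*) = 0$ (the interchange of $\tau$ with the norm-convergent sum is justified by boundedness of $\tau$). Since $a \ge 0$ and $\tau(a) = 0$ for every $\tau$ in a separating family, and each such $\tau$ is faithful on positive elements in the sense that $\tau(a) = 0$ with $a \ge 0$ forces $a \in I_\tau$, the intersection $\bigcap_\tau I_\tau = 0$ forces $a = 0$, i.e.\ $\sum_j x_j^* x_j = \sum_j x_j x_j^*$.

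For the reverse direction, assume the stated "equality" property and suppose, for contradiction, that $\cA$ has no separating family of traces; then by (iii) $\Rightarrow$ (ii) of Proposition~\ref{prop:faithful} there is a non-zero ideal $I \lhd \cA$ with $T(I) = \emptyset$, i.e.\ $I$ is a non-unital traceless \Cs. The key step is to convert tracelessness of $I$ into a violation of the sum-identity inside $\cA$: using a version of Theorem~\ref{thm:H-P}(iii) valid for non-unital (or local) situations — applied to an approximate unit or to a full hereditary subalgebra of $I$ — one produces, for each $\delta > 0$, finitely many $y_1, \dots, y_{n_\delta} \in I$ with $\sum_j y_j^* y_j$ approximating a fixed positive element $h$ (say $\|h\| = 1$) while $\|\sum_j y_j y_j^*\| \le \delta \|h\|$; summing a rapidly-decreasing sequence of such finite blocks (scaled so the tails are norm-summable) yields a single sequence $\{x_n\}$ in $\cA$ with $\sum_j x_j x_j^* \le \sum_j x_j^* x_j$ but $\sum_j x_j^* x_j - \sum_j x_j x_j^* \ne 0$, contradicting the hypothesis. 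The main obstacle is precisely this last step: one must carefully arrange the scaling of the successive finite families so that both infinite sums converge in norm and the "defect" $\sum_j x_j^*x_j - \sum_j x_jx_j^*$ is controlled from below (nonzero), and one must invoke the non-unital form of the Haagerup--Pop criterion — this is where one leans on Kirchberg's extension of Theorem~\ref{thm:H} / the non-unital Haagerup dichotomy, or alternatively on the fact that a traceless hereditary subalgebra of $I$ already furnishes the required finite families.

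Alternatively, and perhaps more cleanly, one can phrase the reverse direction contrapositively using the Cuntz--Pedersen seminorm: if the sum-identity fails, there is $\{x_n\}$ with $a := \sum x_j^*x_j - \sum x_j x_j^* \ge 0$, $a \ne 0$, and $a$ in the closure of the set of finite sums $\sum (x_j^*x_j - x_j x_j^*)$; the continuity properties of $\|\cdot\|_T$ (Cuntz--Pedersen) then show $\|a\|_T < \infty$ while $a \ge 0$, $a \ne 0$, which forces every trace to vanish on a non-zero hereditary subalgebra, hence no trace separates. I expect the bookkeeping in passing between norm-convergent infinite sums and the $\|\cdot\|_T$-machinery — including verifying that the relevant set of finite differences is norm-closed in the appropriate sense — to be the delicate point, but it is exactly the content of \cite[Theorem 3.4]{CuntzPed-JFA} which we may cite.
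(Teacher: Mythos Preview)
The paper does not actually prove this theorem; it is simply quoted from \cite[Theorem 3.4]{CuntzPed-JFA} without argument. So there is no ``paper's own proof'' to compare against, and I will assess your proposal on its merits.

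Your forward direction is fine: if $a = \sum_j x_j^*x_j - \sum_j x_jx_j^* \ge 0$, then boundedness of each $\tau$ and the trace identity give $\tau(a)=0$; a separating family then forces $a=0$.

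The reverse direction, however, has a genuine gap. In your first approach you reduce (via the implication (ii)$\Leftrightarrow$(iii) of Proposition~\ref{prop:faithful}, which indeed does not need separability) to a nonzero ideal $I$ with no bounded trace, and then invoke a non-unital Haagerup--Pop criterion to produce $y_1,\dots,y_n$ with $\sum y_j^*y_j$ close to a fixed $h$ and $\|\sum y_jy_j^*\|$ small. But the statement you must violate requires the \emph{operator inequality} $\sum x_jx_j^* \le \sum x_j^*x_j$, and a small norm for $\sum y_jy_j^*$ does not imply $\sum y_jy_j^* \le \sum y_j^*y_j$. Arranging this domination is exactly the delicate point, and nothing in Theorem~\ref{thm:H-P} (or its non-unital form) gives it to you; your block-summation scheme would produce a self-adjoint difference, not a positive one. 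Your second approach then concedes the issue by citing \cite[Theorem 3.4]{CuntzPed-JFA} --- which is the very statement being proved, so this is circular.

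What Cuntz and Pedersen actually do is different in spirit and does not pass through ideals or Haagerup's criterion (their paper predates Haagerup's by over a decade). They work directly with the closed real subspace $\cA_0 = \overline{\{\sum_{\text{finite}}(x_j^*x_j - x_jx_j^*)\}}$ of $\cA_{\mathrm{sa}}$, identify it via Hahn--Banach with the common zero-set of all traces, and then prove the technical lemma that every positive element of $\cA_0$ can be realized as a norm-convergent infinite sum $\sum(x_j^*x_j - x_jx_j^*)$ with $\sum x_jx_j^* \le \sum x_j^*x_j$. That last step --- upgrading ``in the closure of finite sums'' to an honest infinite-sum representation with the required operator ordering --- is the real content, and it is precisely what your sketch does not supply.
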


\noindent We proceed to list some permanence properties of \Cs s with a faithful tracial state.

\begin{proposition} \label{prop:perm-fts}
Let $\cA$ and $\cB$ be  unital \Cs s. 
\begin{enumerate}
\item If $\cA$ and $\cB$ are Morita equivalent, and if $\cA$ admits a faithful trace, then so does $\cB$.
\item The minimal tensor product $\cA \otimes \cB$ admits a faithful trace if and only if both $\cA$ and $\cB$ do.
\item If $I$ is an essential ideal in $\cA$, and if $I$ admits a faithful bounded trace, then so does $\cA$. In particular, $I$ admits a faithful trace if and only if $\cM(I)$ does. 
\item If $I$ is an ideal in $\cA$, then $\cA$ admits a faithful trace if $I$ and $\cA/I$ do. 
\end{enumerate}
\end{proposition}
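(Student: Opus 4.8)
The plan is to handle (i) through the ``corner'' description of Morita equivalence, to obtain (iii) and (iv) from a single device for extending a bounded positive trace off an ideal, and to prove (ii) by a GNS/amplification argument in which the forward implication is trivial and the converse is the crux. For (i), recall that for unital Morita equivalent $\cA$ and $\cB$ one has $\cB \cong pM_n(\cA)p$ for some $n \ge 1$ and some (full) projection $p \in M_n(\cA)$. If $\tau$ is a faithful trace on $\cA$, then $\tau \otimes \Tr_n$ is a faithful positive trace on $M_n(\cA)$: if $(a_{ij}) \ge 0$ and $\sum_i \tau(a_{ii}) = 0$ then each $a_{ii} = 0$, and positivity of the $2 \times 2$ compressions then forces $(a_{ij}) = 0$. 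Its restriction to the hereditary sub-\Cs{} $pM_n(\cA)p \cong \cB$ is again a faithful positive trace, nonzero because $(\tau \otimes \Tr_n)(p) > 0$; normalizing gives a faithful tracial state on $\cB$.

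The common engine for (iii) and (iv) is that every bounded positive trace $\sigma$ on a closed two-sided ideal $I \lhd \cA$ extends to a bounded positive trace $\bar\sigma$ on $\cA$ with $\bar\sigma|_I = \sigma$: writing $I^{**} = z\cA^{**}$ for the central projection $z \in \cA^{**}$, the normal extension $\sigma^{**}$ of $\sigma$ to $I^{**}$ is again a trace (by separate weak$^*$-continuity of multiplication and weak$^*$-density of $I$), and $\bar\sigma(a) := \sigma^{**}(za)$ is then a bounded trace on $\cA$ because $z$ is central, with $\bar\sigma(c) = \sigma^{**}(c) = \sigma(c)$ for $c \in I$ since $zc = c$. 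Granting this, (iv) is quick: if $\sigma$ is a faithful bounded positive trace on $I$, $\tau$ a faithful trace on $\cA/I$, and $q \colon \cA \to \cA/I$ the quotient map, set $\omega := \bar\sigma + \tau \circ q$, a nonzero positive trace on $\cA$; if $a \ge 0$ and $\omega(a) = 0$, faithfulness of $\tau$ forces $q(a) = 0$, hence $a \in I$, and then $\sigma(a) = \bar\sigma(a) = 0$ forces $a = 0$. Normalizing $\omega$ finishes (iv).

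For (iii), with $I$ now \emph{essential} and $\sigma$ faithful, extend $\sigma$ to $\bar\sigma$ on $\cA$ as above and suppose $a \ge 0$ with $\bar\sigma(a) = 0$. Since $\bar\sigma$ is a positive trace, $\bar\sigma(xax^*) = \bar\sigma\big(a^{1/2}x^*xa^{1/2}\big) \le \|x\|^2\, \bar\sigma(a) = 0$ for every $x \in \cA$; taking $x = b^*$ with $b \in I$ gives $\sigma(b^*ab) = \bar\sigma(b^*ab) = 0$, so $b^*ab = 0$, i.e.\ $a^{1/2}b = 0$, for all $b \in I$ by faithfulness of $\sigma$. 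Thus $a^{1/2}$ annihilates $I$, which by essentiality of $I$ forces $a^{1/2} = 0$ and hence $a = 0$; so $\bar\sigma$ is faithful, and normalizing proves the first assertion. The ``in particular'' then follows by applying this to the essential ideal $I \lhd \cM(I)$, together with the observation that the restriction to $I$ of a faithful trace on $\cM(I)$ is a faithful bounded positive trace.

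Finally (ii). The implication ``$\cA \otimes \cB$ has a faithful trace $\Rightarrow$ both $\cA$ and $\cB$ do'' is immediate: restrict a faithful trace on $\cA \otimes \cB$ to the unital sub-\Cs{} $\cA \otimes 1_\cB \cong \cA$ (and similarly $1_\cA \otimes \cB \cong \cB$); this is a trace, and it is faithful because nonzero positive elements of $\cA$ stay nonzero and positive in $\cA \otimes \cB$. For the converse, given faithful traces $\tau_\cA, \tau_\cB$ on $\cA, \cB$, the product $\tau_\cA \otimes \tau_\cB$ is a tracial state on $\cA \otimes \cB$, and the only real issue is faithfulness on the \emph{minimal} tensor product. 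The plan is: (a) show that a faithful tracial state $\tau$ on a unital \Cs{} induces a faithful normal trace on its GNS von Neumann algebra $M = \pi_\tau(\cA)''$ --- if the support projection $z$ of the induced normal trace $\bar\tau$ on $M$ were $\ne 1$, then approximating the central projection $1-z$ in the strong$^*$ topology by $\pi_\tau(a_i)$ with $a_i \in \cA$ positive contractions, normality of $\bar\tau$ gives $\tau(a_i^2) \to \bar\tau(1-z) = 0$, so $\pi_\tau(a_i)\xi_\tau \to 0$ while also $\pi_\tau(a_i)\xi_\tau \to (1-z)\xi_\tau$, forcing $(1-z)\xi_\tau = 0$ and hence, since $\xi_\tau$ is cyclic and $1-z$ central, $z = 1$ --- so that $\xi_\tau$ is separating for $M$; (b) apply this to $\tau_\cA$ and $\tau_\cB$ and note that $\xi_{\tau_\cA} \otimes \xi_{\tau_\cB}$ is then separating for $M_\cA \,\overline{\otimes}\, M_\cB$ (tensor products of separating vectors are separating), hence for the sub-\Cs{} $\pi_{\tau_\cA}(\cA) \otimes \pi_{\tau_\cB}(\cB) \cong \cA \otimes \cB$ inside it; (c) conclude that $\tau_\cA \otimes \tau_\cB$, being the vector state at $\xi_{\tau_\cA} \otimes \xi_{\tau_\cB}$ on this copy of $\cA \otimes \cB$, is faithful. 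I expect step (a)--(b) --- keeping faithfulness under control after amplifying to the von Neumann level and tensoring, where at the C$^*$-level a ``hidden kernel'' might a priori appear in the spatial tensor product --- to be the main obstacle.
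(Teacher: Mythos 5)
Your proposal is correct. Parts (i), (iii) and (iv) follow essentially the same route as the paper, only with more detail filled in: for (i) the paper likewise reduces to matrix amplifications and corners; for (iii) the paper extends the trace off the ideal and observes that the trace-kernel ideal $I_{\overline{\tau}}$ satisfies $I_{\overline{\tau}} \cap I = I_\tau = 0$, hence vanishes by essentiality --- your element-by-element verification that $a^{1/2}$ annihilates $I$ is just an unpacking of this. For (iv) your direct argument (sum the extended trace on $I$ with the pulled-back trace from $\cA/I$ and check faithfulness by hand) is actually slightly cleaner than the paper's, which only exhibits a separating family $\big(\bigcap_{\tau} I_\tau = 0\big)$ and implicitly leans on Proposition~\ref{prop:faithful}, whose passage from a separating family to a single faithful trace uses separability. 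The genuine divergence is in the ``if'' direction of (ii): the paper shows that the trace-kernel ideal of $\tau_\cA \otimes \tau_\cB$ is zero by invoking Kirchberg's slice lemma (a non-zero ideal of $\cA \otimes \cB$ contains a non-zero elementary tensor $a \otimes b$, contradicting faithfulness of the factors), whereas you pass to the GNS representations, use that the GNS vector of a tracial state is automatically separating for $\pi_\tau(\cA)''$ (your step (a) is correct, though it can be shortened: $\|x\xi_\tau\| = \|x^*\xi_\tau\|$ for $x$ in the von Neumann algebra already gives separating, without discussing the support projection), and that a tensor product of separating vectors is separating for the von Neumann tensor product. Both arguments are sound; the slice lemma gives a one-line proof at the cost of citing a nontrivial structural result about ideals of minimal tensor products, while your argument is self-contained modulo standard von Neumann algebra facts and identifies the real content --- that faithfulness survives passage to the spatial tensor product because it can be tested against a single separating vector.
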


\begin{proof} (i). This follows from the easily seen facts that if $\cA$ admits a trace then so does any matrix algebra over $\cA$ and every corner $p\cA p$ of $\cA$, with $p$ a projection in $\cA$.

(ii). The ``only if'' is trivial because $\cA$ and $\cB$ both are sub-\Cs s of $\cA \otimes \cB$. To see the ``if'' part, take faithful traces $\tau_\cA$ and $\tau_\cB$ on $\cA$ and $\cB$, respectively, and consider the trace $\tau = \tau_\cA \otimes \tau_\cB$ on $\cA \otimes \cB$. To show that $\tau$ is faithful we must show that its associated trace-kernel ideal $I_\tau$ of $\cA \otimes \cB$ is zero. However, if $I_\tau$ were non-zero, then by Kirchberg's ``slice lemma'' (see, e.g., \cite[Lemma 4.1.9]{Ror:encyc}) it contains a non-zero elementary tensor $a \otimes b$. But this would contradict faithfulness of $\tau_\cA$ and $\tau_\cB$.

(iii). Take a faithful trace $\tau$ (of norm one) on $I$ and extend it to a trace $\overline{\tau}$ on $\cA$. Then $I_{\overline{\tau}} \cap I = I_\tau = 0$, which entails that $I_{\overline{\tau}} =0$, since $I$ is essential. 

(iv). Suppose that $I$ and $\cA/I$ both admit faithful traces, and  let $I_0 = \bigcap_{\tau \in T(\cA)} I_\tau$. 
Then $I \cap I_0 = 0$ because $I$ admits a faithful trace, and $(I+I_0)/I=0$ because $\cA/I$ admits a faithful trace, so  $I_0=0$.
\end{proof}

\noindent We emphasize below the following special case of part (iii) of the proposition above.

\begin{corollary} A  (possibly non-unital) \Cs{} $I$ admits a faithful positive bounded trace if and only if its multiplier algebra $\cM(I)$ and/or its unitization $\widetilde{I}$ admit a faithful trace.
\end{corollary}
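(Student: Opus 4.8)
The plan is to reduce the corollary entirely to Proposition~\ref{prop:perm-fts}(iii), exploiting that $I$ embeds as an essential ideal of its multiplier algebra $\cM(I)$ and, when $I$ is non-unital, also of its unitization $\widetilde I$. The equivalence of a faithful positive bounded trace on $I$ with a faithful trace on $\cM(I)$ needs no new argument: since $I$ is by construction an essential ideal of $\cM(I)$, this is precisely the ``in particular'' clause of part (iii). It remains only to treat the unitization $\widetilde I$.

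For $\widetilde I$ I would first record the easy implication common to all cases. If $B$ is a unital \Cs{} containing $I$ as a sub-\Cs{} and $\tau$ is a faithful tracial state on $B$, then for $a \in I$ with $\tau(a^*a)=0$ we get $a=0$; hence $\tau|_I$ is a faithful positive bounded trace on $I$. Applying this with $B = \widetilde I$ gives one direction. For the converse I would split on unitality: if $I$ is non-unital, then $I$ is an essential ideal of $\widetilde I$ and part (iii) applies verbatim, so a faithful positive bounded trace on $I$ produces a faithful trace on $\widetilde I$; if $I$ is unital, then $\widetilde I \cong I \oplus \C$, and given a faithful positive bounded trace $\tau_I$ on $I$, the positive bounded trace $\tau_I \oplus \mathrm{id}_{\C}$ on $I \oplus \C$ is faithful, so that (after normalizing) $\widetilde I$ admits a faithful tracial state.

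The only step requiring care is the unital case of the unitization: there $I$ is not essential in $\widetilde I$, so Proposition~\ref{prop:perm-fts}(iii) cannot be cited, and one must instead use the direct-sum decomposition $\widetilde I = I \oplus \C$ directly. This is, however, entirely elementary, so I expect no real obstacle; the substance of the corollary is carried by the essential-ideal statement in part (iii), and everything else is either a citation or the trivial observation that restricting a faithful trace to a sub-\Cs{} preserves faithfulness.
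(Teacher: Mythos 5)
Your proof is correct and takes essentially the same route as the paper, which presents this corollary as an immediate special case of Proposition~\ref{prop:perm-fts}(iii), exactly the reduction you carry out. Your explicit case distinction for unital $I$ (where $I$ is not essential in $\widetilde{I}$, so (iii) cannot be cited and one uses $\widetilde{I} \cong I \oplus \C$ directly) is a detail the paper glosses over, and your handling of it is correct.
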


\begin{remark}[Obstructions to having a faithful trace] \label{rem:obstructions}
Every unital \Cs{} with a faithful tracial state must be stably finite (all projections in all matrix algebras over the \Cs{} are finite). Indeed, any faithful trace on a \Cs{} extends to a faithful trace on any matrix algebras over the \Cs, and 
the presence of a faithful trace forces all projections to be finite.

Secondly, a faithful trace on a \Cs{} restricts to a non-zero bounded positive trace on all the its sub-\Cs s. Stable \Cs s do not admit any bounded non-zero trace, so no \Cs{} with a faithful trace can contain a stable sub-\Cs.\footnote{A \Cs{} $\cA$ is stable if $\cA \otimes \cK \cong \cA$, where $\cK$ is the \Cs{} of compact operators on an infinite dimensional separable Hilbert space, see also \cite{HjeRor:stable} for an intrinsic characterization of stability} 

Any \Cs{} that contains a purely infinite \Cs{} cannot have a faithful trace, as purely infinite \Cs s are traceless.  Purely infinite \Cs s need not have projections, nor have stable sub-\Cs s, so this obstruction is not covered by the two previously mentioned ones. 
\end{remark}

\noindent It follows in particular from Proposition~\ref{prop:perm-fts} that $\cA$ admits a faithful trace if and only if its $\cZ$-stabilization $\cA \otimes \cZ$ admits a faithful trace.

\begin{definition}
A \Cs{} $\cA$ will be said to be \emph{$n$-stable}, for some $n \ge 1$, if $M_n(\cA)$ is stable. We say that $\cA$ is 
\emph{matrix stable} if it is $n$-stable, for some $n \ge 1$.  
\end{definition}

\noindent
More generally, we can consider the property that $\cA \otimes \cB$ is stable, where $\cB$ is some fixed unital \Cs. For two very special cases of $\cB$ we can say the following:

\begin{theorem}  \label{prop:Z-stable} Let $\cA$ be a separable \Cs.
\begin{enumerate}
\item If $\cA$ is exact, then $\cA \otimes \cZ$ is stable if and only if $\cA$ admits no bounded trace and has no unital quotient.
\item $\cA \otimes \cO_\infty$ is stable if and only if $\cA$ has no unital quotient.
\end{enumerate}
\end{theorem}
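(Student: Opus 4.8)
The plan is to treat the two parts separately, in each case reducing the ``if'' direction to a known criterion for stability of a separable \Cs, namely the Hjelmeland--R\o rdam intrinsic characterization of stability (no non-zero bounded trace together with the appropriate scaling/``no unital quotient'' condition), or, more elementarily, Kirchberg's and the second author's structure results. For the ``only if'' directions the work is light: if $\cA \otimes \cZ$ (respectively $\cA \otimes \cO_\infty$) is stable, then it admits no non-zero bounded trace (stable \Cs s are traceless, cf.\ Remark~\ref{rem:obstructions}), and it has no unital quotient --- a stable \Cs{} has no unital quotient, since a unital quotient would pull back a projection that is full modulo the kernel, contradicting that in a stable algebra every element is, roughly speaking, ``absorbed at infinity''; more precisely $\cB \otimes \cK$ has a unital quotient only if $\cB = 0$. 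In part (i), tracelessness of $\cA \otimes \cZ$ forces $\cA$ itself to have no bounded trace, because a bounded trace on $\cA$ would tensor with the unique trace on $\cZ$ to give one on $\cA \otimes \cZ$; and no unital quotient of $\cA \otimes \cZ$ translates to no unital quotient of $\cA$ (a unital quotient $\cA/I$ gives the unital quotient $(\cA/I) \otimes \cZ$). In part (ii) the analogous observation is that $\cO_\infty$ absorbs traces trivially (it is traceless), so the trace condition is vacuous, and again no unital quotient of $\cA \otimes \cO_\infty$ is equivalent to no unital quotient of $\cA$.

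For the substantive ``if'' direction of part (ii): assume $\cA$ is separable with no unital quotient; I want $\cA \otimes \cO_\infty \cong \cA \otimes \cO_\infty \otimes \cK$. Here the key input is that $\cO_\infty$ is strongly self-absorbing and $\cK$-stable in the appropriate sense, together with a stability criterion for separable \Cs s that are $\cO_\infty$-absorbing. Concretely, $\cB := \cA \otimes \cO_\infty$ is separable, $\cO_\infty$-absorbing, hence has real rank zero behaviour on its Cuntz semigroup making every positive element ``properly infinite'', and the standard criterion (the separable version of the statement ``an $\cO_\infty$-absorbing \Cs{} is stable iff it has no unital quotient and no bounded trace'') applies; since $\cO_\infty$-absorbing \Cs s are automatically traceless, the only obstruction left is the existence of a unital quotient, which we have excluded. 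I would cite the relevant result (this is essentially in the literature on stability of purely infinite \Cs s --- e.g.\ that a separable $\sigma$-unital purely infinite \Cs{} is stable iff it has no unital quotient) and apply it to $\cB$.

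For the ``if'' direction of part (i): assume $\cA$ is separable and exact, has no bounded trace, and has no unital quotient; I want $\cA \otimes \cZ$ stable. The route is the Hjelmeland--R\o rdam characterization of stability: a separable \Cs{} $\cB$ is stable if and only if it has no non-zero bounded trace and no ``unital quotient at the corners'' in their precise sense (the condition ruling out a certain scaling phenomenon). So I must verify that $\cB := \cA \otimes \cZ$ has no bounded trace --- which is exactly the hypothesis that $\cA$ has none, transported across $\cZ$ --- and that $\cB$ satisfies the no-unital-quotient-type condition; the exactness of $\cA$ enters precisely to ensure that the absence of a bounded \emph{trace} on $\cA$ (equivalently on $\cB$, since any quasitrace on the exact algebra $\cB$ is a trace by Haagerup's Theorem~\ref{thm:H}) already rules out the quasitrace-type obstruction to stability. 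The main obstacle, and the step I would spend the most care on, is matching the ``no bounded trace + no unital quotient'' hypotheses to the exact form of the stability criterion being invoked: one has to be careful that the Hjelmeland--R\o rdam condition is genuinely equivalent to ``no bounded trace and no unital quotient'' in the presence of $\cZ$-absorption and exactness, and this is where Theorem~\ref{thm:H} (quasitraces on exact \Cs s are traces) does the essential work, converting a potential dimension-function/quasitrace obstruction into an honest trace obstruction that we have assumed away. I expect the rest --- the permanence of ``no unital quotient'' and ``no bounded trace'' under tensoring with $\cZ$, and under passing to matrix algebras and hereditary subalgebras --- to be routine and I would not belabour it.
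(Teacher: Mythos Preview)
Your proposal is correct and aligns with the paper's approach. The paper's proof consists of two citations: part~(ii) is \cite[Theorem~4.24]{KirRor:pi} (any separable purely infinite \Cs{} is stable iff it has no unital quotient), which is exactly the result you invoke for $\cA \otimes \cO_\infty$; part~(i) is \cite[Theorem~3.6]{HirRorWin:fields} combined with \cite{Ror:Z}. Two small corrections: the name is Hjelmborg--R\o rdam, not Hjelmeland--R\o rdam; and their stability criterion from \cite{HjeRor:stable} is \emph{not} of the form ``no bounded trace plus a no-unital-quotient-type condition'' --- it is a Cuntz-comparison condition on approximate units --- so the step you rightly flag as delicate (that $\cZ$-absorption and exactness reduce stability to ``no bounded trace and no unital quotient'') is in fact the whole substance of the Hirshberg--R\o rdam--Winter theorem the paper cites, not a routine matching exercise against the raw Hjelmborg--R\o rdam criterion.
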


\begin{proof} (i) follows from \cite[Theorem 3.6]{HirRorWin:fields} and \cite{Ror:Z}, while (ii) follows from \cite[Theorem 4.24]{KirRor:pi}, which states the more general result that any separable purely infinite \Cs{} is stable if and only if it has no unital quotient.
\end{proof}

\noindent One can replace $\cZ$ in (i) by any unital \Cs{} that tensorially absorbs $\cZ$.

\begin{remark} \label{rem:sns}
One can, for each $n \ge 2$, find a simple separable \Cs{} of stable rank one which is $n$-stable but not $(n-1)$-stable, cf.\ \cite{Ror:sns}. 

Taking a $c_0$-direct sum of a sequence of such \Cs s, one obtains a \Cs{} $\cA$ of stable rank one, which is not matrix stable, but where $\cA \otimes \cZ$ is stable. In the example below we construct a \emph{simple} (but not stable rank one) \Cs{} which likewise is stable after being tensored with $\cZ$, but not matrix stable. 
\end{remark}

\begin{example} \label{ex:matrixstable}
Take the \Cs{} $\cP$ from Theorem~\ref{thm:P}. Let $e_0$ denote the identity of $M_2(\cP)$, which is a properly infinite projection. Let $e_1$ be a non-zero finite projection in $M_2(\cP)$ such that $e_0-e_1$ is properly infinite. 
As in the proof of Theorem~\ref{thm:simple}, choose inductively non-zero projections $e_n \in M_2(\cP)$, for $n \ge 2$, such that $e_n \otimes 1_n$ is finite, and $e_n$ is a proper subprojection of $e_{n-1}$, where $e \otimes 1_n = \mathrm{diag}(e,e, \dots, e) \in M_n(M_2(\cP))$, for $e \in M_2(\cP)$. Put
$$\cA = \overline{\bigcup_{n=1}^\infty (e_0-e_n) M_2(\cP) (e_0-e_n)}.$$
Then $\cA$ is simple and non-unital (hence no unital quotients), and $\cA \otimes \cZ$ is purely infinite,  because $\cA$ is simple and not stably finite, so $\cA \otimes \cZ$ is stable by \cite[Theorem 4.24]{KirRor:pi}. Take $n \ge 1$, and observe that $\{(e_0-e_k) \otimes 1_n\}_{k \ge 1}$ is an approximate unit for $M_n(\cA)$. If $f \in M_n(\cA)$ is a projection which is orthogonal to $(e_0-e_n) \otimes 1_n$, then $f \precsim (e_n - e_k) \otimes 1_n \le e_n \otimes 1_n$, for some $k > n$. This implies that $f$ is finite, so no properly infinite projection is sub-equivalent to $f$. In particular, 
we cannot have $(e_0-e_n) \otimes 1_n \precsim f$. By \cite{HjeRor:stable} this shows that $M_n(\cA)$ is not stable. 
\end{example}

\noindent
The property of having a faithful trace (or a separating family of traces) is not closed under \emph{maximal} tensor product. In fact, we have the following remarkable result of Kirchberg from \cite{Kir:WEP}, see
his proof of (B4) $\Rightarrow$ (B3) on p.\ 485, see also \cite[Proposition 3.13]{KirRor:IJM} for a more detailed proof.

\begin{proposition}[Kirchberg] \label{prop:K-berg}
Let $\cA$ and $\cB$ be unital \Cs s. Then 
$\cA \otimes_{\mathrm{max}} \cB$ admits a faithful trace if and only if $\cA \otimes_{\mathrm{max}} \cB = \cA \otimes \cB$ and both $\cA$ and $\cB$ admit faithful traces.
\end{proposition}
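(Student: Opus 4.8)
The plan is to prove the non-trivial implication: if $\cA \otimes_{\mathrm{max}} \cB$ admits a faithful trace $\tau$, then the maximal and minimal norms coincide on $\cA \odot \cB$ and each factor has a faithful trace. The last part is immediate from the restriction of $\tau$ to the copies of $\cA$ and $\cB$ inside $\cA \otimes_{\mathrm{max}} \cB$, so the heart of the matter is showing $\cA \otimes_{\mathrm{max}} \cB = \cA \otimes \cB$. First I would recall the GNS picture: a faithful trace $\tau$ on $D := \cA \otimes_{\mathrm{max}} \cB$ gives a faithful tracial representation, so $D$ embeds into the finite von Neumann algebra $M := \pi_\tau(D)''$, which carries a faithful normal trace extending $\tau$. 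Thus $\cA$ and $\cB$ sit as commuting unital sub-$C^*$-algebras inside a finite von Neumann algebra $M$, and the commuting representations $\cA \to M$, $\cB \to M$ combine to the embedding $D \hookrightarrow M$.

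The key step is to exploit that commuting pairs of representations into a \emph{finite} von Neumann algebra are automatically continuous for the minimal norm. This is the classical fact --- going back to the theory of the Fubini product / the behavior of the normal trace --- that if $\cA, \cB$ act on a Hilbert space as commuting von Neumann algebras $N_1, N_2$ with $N_1 \vee N_2$ finite (equivalently, sitting inside a finite von Neumann algebra with a faithful normal trace), then the product map factors through $\cA \otimes \cB$; concretely, one checks that on $N_1 \vee N_2$ the trace restricts, via a conditional-expectation argument, to the tensor functional $\tau_\cA \otimes \tau_\cB$ on elementary tensors, and the GNS space of $\tau$ on $\cA \odot \cB$ is the Hilbert-space tensor product of the GNS spaces of $\tau_\cA$ and $\tau_\cB$. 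Since the minimal tensor norm is spatial, this forces $\|x\|_{\mathrm{min}} = \|x\|_{\tau}$-type estimates; combined with the reverse inequality $\|\cdot\|_{\mathrm{min}} \le \|\cdot\|_{\mathrm{max}}$ and faithfulness of $\tau$ (which makes $\|\cdot\|_\tau$ a norm dominating nothing larger than $\|\cdot\|_{\mathrm{min}}$ on the image), one deduces that the quotient map $\cA \otimes_{\mathrm{max}} \cB \to \cA \otimes \cB$ is injective. The cleanest route is to invoke Kirchberg's argument directly: the existence of a faithful trace on $\cA \otimes_{\mathrm{max}} \cB$ says precisely that $\cA \otimes_{\mathrm{max}} \cB$ embeds in a finite von Neumann algebra, and for $C^*$-algebras embeddable in finite von Neumann algebras the maximal and minimal tensor products of the two commuting copies agree because the GNS construction is spatial --- this is exactly the content of (B4) $\Rightarrow$ (B3) in \cite{Kir:WEP}, see also \cite[Proposition 3.13]{KirRor:IJM}.

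The main obstacle I anticipate is making rigorous the claim that the GNS Hilbert space of $\tau|_{\cA \odot \cB}$ decomposes as a Hilbert-space tensor product and that the resulting representation is the \emph{spatial} tensor product of $\pi_{\tau_\cA}$ and $\pi_{\tau_\cB}$; this requires knowing that the conditional expectation $M \to N_1$ (or the trace itself) is multiplicative across the commuting pair in the precise sense $\tau(ab) = \tau_\cA(a)\tau_\cB(b)$, which is a genuine theorem about finite von Neumann algebras (a commuting square / Tomita-type statement) rather than a formality. For the write-up I would state this as a lemma with a reference to \cite{Kir:WEP} and \cite{KirRor:IJM}, sketch the GNS reduction, and then record the trivial converse direction, thereby closing the proof.
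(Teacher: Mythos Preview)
The paper itself gives no proof of this proposition: it records the statement, attributes it to Kirchberg, and points to the argument (B4) $\Rightarrow$ (B3) in \cite{Kir:WEP} together with \cite[Proposition~3.13]{KirRor:IJM} for details. Your eventual plan---reduce to ``every trace on $\cA \otimes_{\max} \cB$ factors through $\cA \otimes_{\min} \cB$'' and cite those two references for that fact---is therefore exactly what the paper does, and the converse direction is handled by Proposition~\ref{prop:perm-fts}(ii).

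However, the mechanism you sketch for why the GNS representation is min-continuous is wrong. You assert that ``via a conditional-expectation argument'' one gets $\tau(a\otimes b)=\tau_\cA(a)\tau_\cB(b)$ on elementary tensors and that the GNS space of $\tau$ splits as $H_{\tau_\cA}\otimes H_{\tau_\cB}$. This fails already for $\cA=\cB=\C^2$: the faithful trace on $\C^4$ with weights $(\tfrac13,\tfrac16,\tfrac16,\tfrac13)$ has $\tau_\cA=\tau_\cB=(\tfrac12,\tfrac12)$, so $\tau_\cA\otimes\tau_\cB$ assigns weight $\tfrac14$ to each minimal projection, which is not $\tau$. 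A faithful trace on a tensor product need not be a product trace, and no conditional expectation will convert it into one; the ``genuine theorem about finite von Neumann algebras'' you are hoping for in your last paragraph does not exist in that form.

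What the cited references actually do is different. One does pass to the GNS representation and land in a finite von Neumann algebra $M$ in standard form on $L^2(M)$, but the spatiality comes from the modular conjugation $J$: conjugation by $J$ carries the copy of $\cB$ inside $M$ to a (opposite) copy inside $M'$, and it is this passage to the commutant---not any product decomposition of $\tau$---that forces the pair of representations to be min-continuous. From this one concludes that \emph{every} trace on $\cA\otimes_{\max}\cB$ vanishes on the kernel of the map to $\cA\otimes_{\min}\cB$; faithfulness of $\tau$ then makes that kernel zero. If you intend to include a sketch rather than a bare citation, this is the mechanism to describe.
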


\begin{remark} Kirchberg proved in \cite{Kir:WEP} that the Connes Embedding Problem (CEP) has a positive answer if and only if $C^*(\mathbb{F}_\infty) \otimes_{\mathrm{max}} C^*(\mathbb{F}_\infty) = C^*(\mathbb{F}_\infty) \otimes  C^*(\mathbb{F}_\infty)$. Since $C^*(\mathbb{F}_\infty)$ has a faithful tracial state, being RFD, Kirchberg thus shows that CEP is equivalent to the existence of a faithful trace on $C^*(\mathbb{F}_\infty \times \mathbb{F}_\infty) \cong C^*(\mathbb{F}_\infty) \otimes_{\mathrm{max}} C^*(\mathbb{F}_\infty)$. A negative solution to the CEP has been announced in \cite{JNVWY:MIP*=RE}, which in this context implies that $C^*(\mathbb{F}_\infty) \otimes_{\mathrm{max}} C^*(\mathbb{F}_\infty) \ne C^*(\mathbb{F}_\infty) \otimes  C^*(\mathbb{F}_\infty)$ and that $C^*(\mathbb{F}_\infty \times \mathbb{F}_\infty)$ has no faithful trace.

For the \emph{reduced} group \Cs s it is known, \cite{AkeOst:F_2-tensor}, that $C^*_\lambda(\mathbb{F}_\infty) \otimes_{\mathrm{max}} C^*_\lambda(\mathbb{F}_\infty) \ne C^*_\lambda(\mathbb{F}_\infty) \otimes C^*_\lambda(\mathbb{F}_\infty)$, so the former does not admit a faithful tracial state, while $C^*_\lambda(\mathbb{F}_\infty)$ surely does.

It is known that there exists groups $\Gamma$ for which $C^*(\Gamma)$ does not admit a faithful tracial state, e.g., $\Gamma = \mathrm{SL}(n,\Z)$, for $n \ge 3$, cf.\ \cite{Bekka-Invent-07}. We do not know which $C^*$-algebraic obstruction prevents these \Cs s having a faithful trace. One interesting possibility is that they contain an infinite projection. We mention here that it is an  open problem if there exists a group $\Gamma$ for which $C^*(\Gamma)$ contains an infinite projection. 
\end{remark}

\noindent Similarly one can ask if there is an alternative understanding of Proposition~\ref{prop:K-berg}:

\begin{question} Is the kernel of the canonical map $\cA \otimes_{\mathrm{max}} \cB \to \cA \otimes \cB$ always stable (or matrix stable), whenever $\cA$ and $\cB$ are unital \Cs s?
\end{question}

\begin{example} \label{ex:non-permanence}
(i). A quotient of a \Cs{} with a faithful trace need not admit a (faithful) trace. The full group \Cs{} $C^*(\mathbb{F}_\infty)$ is RFD and hence admits a faithful trace. However, every separable \Cs{} (including $\cO_\infty$) is a quotient of $C^*(\mathbb{F}_\infty)$.

(ii). The class of unital \Cs s admitting a faithful tracial state is not closed under inductive limits. The unitization $\widetilde{\cK}$ of the compact operators is an AF-algebra and hence an inductive limit of finite dimensional \Cs s, but it does not admit a faithful tracial state.

\end{example}

\begin{proposition} A unital \Cs{} has a faithful trace if and only if it embeds into a type II$_1$ von Neumann factor.
\end{proposition}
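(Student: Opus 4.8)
The plan is to prove the two implications by hand: the ``if'' direction is essentially immediate, and the ``only if'' direction rests on the GNS construction together with the standard fact from free probability that every tracial von Neumann algebra sits inside a II$_1$ factor.

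For the ``if'' direction, suppose $\cA$ embeds unitally into a II$_1$ factor $\cM$. A II$_1$ factor carries a unique faithful normal tracial state $\tau_\cM$, and I would simply restrict it: $\tau := \tau_\cM|_\cA$ is a tracial state on $\cA$, and it is faithful because $\tau_\cM$ is faithful and $\cA \subseteq \cM$. If one only knows that $\cA$ embeds non-unitally, with $1_\cA$ mapped to a projection $p$, one first passes to the corner $p\cM p$, which is again a II$_1$ factor, and repeats the argument there.

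For the ``only if'' direction, start from a faithful tracial state $\tau$ on $\cA$ and form the GNS triple $(\pi_\tau, H_\tau, \xi_\tau)$. First I would check that $\pi_\tau$ is faithful: if $a \ge 0$ and $\pi_\tau(a) = 0$, then $\tau(a) = \langle \pi_\tau(a)\xi_\tau, \xi_\tau\rangle = 0$, so $a = 0$ by faithfulness of $\tau$; since $\ker \pi_\tau$ is a closed ideal, it must be $0$. Next set $\cM := \pi_\tau(\cA)'' \subseteq B(H_\tau)$ and let $\bar\tau$ be the vector state $x \mapsto \langle x\xi_\tau, \xi_\tau\rangle$ on $\cM$. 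Since $\xi_\tau$ is a trace vector for $\pi_\tau(\cA)$, the state $\bar\tau$ is tracial on the weakly dense subalgebra $\pi_\tau(\cA)$, hence, by normality, tracial on all of $\cM$; and $\bar\tau$ is faithful because $\xi_\tau$ is cyclic and therefore, being a trace vector, also separating for $\cM$. Thus $\cA$ embeds unitally into the finite von Neumann algebra $\cM$, which carries the faithful normal tracial state $\bar\tau$.

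The remaining — and only substantial — step is to enlarge $\cM$ to a \emph{factor}. Here I would appeal to the well-known fact that a von Neumann algebra with a faithful normal tracial state embeds unitally and trace-preservingly into a II$_1$ factor: concretely, the reduced free product $\cN := \cM * L(\mathbb{F}_2)$ of $(\cM, \bar\tau)$ with $L(\mathbb{F}_2)$ (with its canonical trace) is a II$_1$ factor, and $\cM$ sits inside $\cN$ as a unital von Neumann subalgebra on which the free-product trace restricts to $\bar\tau$. Composing the unital embeddings $\cA \hookrightarrow \cM \hookrightarrow \cN$ then yields the desired unital embedding of $\cA$ into a II$_1$ factor. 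The main obstacle is precisely this last point: the von Neumann algebra generated by $\cA$ in the GNS representation need not be a factor, so one has to import the free-product construction (or some equivalent device) to manufacture a factor containing it; everything else is routine bookkeeping with the GNS construction and elementary properties of the trace condition.
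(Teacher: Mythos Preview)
Your proof is correct and follows essentially the same route as the paper: restrict the trace on the factor for the ``if'' direction, and for the ``only if'' direction pass to $\cM = \pi_\tau(\cA)''$ via GNS and then embed this tracial von Neumann algebra into a II$_1$ factor. The only difference is the device used for the last step: you invoke the free product $\cM * L(\mathbb{F}_2)$ from free probability, whereas the paper cites an argument of Haagerup (recorded in the appendix of \cite{MusRor:CMP}) for the fact that any finite von Neumann algebra with faithful normal trace embeds trace-preservingly into a II$_1$ factor---but these are interchangeable black boxes for the same standard fact.
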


\begin{proof} If a unital \Cs{} $\cA$ admits a faithful tracial state $\tau$, then it embeds into the finite von Neumann algebra $\cM = \pi_\tau(\cA)''$, which again admits a faithful trace. It is well-known that every finite von Neumann algebra with a faithful tracial state embeds (in a trace preserving way) into a II$_1$-factor. For a proof of this fact, due to Haagerup, see the appendix of \cite{MusRor:CMP}.
\end{proof}

\noindent A unital \Cs{} embeds into a von Neumann algebra of type II$_1$ (not necessarily a factor) if and only if it admits a separating family of traces. 

In the theorem below we combine Theorem~\ref{prop:Z-stable} (i) and Proposition~\ref{prop:faithful} to get information about when a unital exact \Cs{} admits a faithful tracial state.

\begin{theorem} \label{thm:faithful} Let $\cA$ be a  unital exact separable \Cs.
\begin{enumerate}
\item If $\cA$ admits a faithful tracial state, then no non-zero ideal $I$ of $\cA$  is such that $I \otimes \cZ$ is stable.
\item If  $\cA$ has no non-zero ideal $I$ for which either $I \otimes \cZ$ is stable or $I$ has a stably properly infinite unital quotient, then $\cA$ admits a faithful tracial state.
\item If $\cA$ has stable rank one, then $\cA$ admits a faithful tracial state if and only if there is no non-zero ideal $I$ of $\cA$ for which $I \otimes \cZ$ is stable.
\end{enumerate}
\end{theorem}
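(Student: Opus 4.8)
The plan is to prove the three parts in order, using Proposition~\ref{prop:faithful} to reduce ``faithful trace'' to ``every non-zero ideal has a non-zero bounded trace'', and using Theorem~\ref{prop:Z-stable}(i) to translate ``$I \otimes \cZ$ stable'' into ``$I$ has no bounded trace and no unital quotient''. For part (i), suppose $\cA$ has a faithful trace $\tau$ and let $I$ be a non-zero ideal. Then $\tau$ restricts to a non-zero positive bounded trace on $I$, so by Theorem~\ref{prop:Z-stable}(i), $I \otimes \cZ$ cannot be stable (the criterion there requires $I$ to admit no bounded trace). This is immediate modulo the observation that the restriction of a faithful trace to a non-zero ideal is non-zero.

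For part (ii), I would use the characterization (iii) of Proposition~\ref{prop:faithful}: it suffices to show that every non-zero ideal $I$ of $\cA$ admits a non-zero positive bounded trace. Fix such an $I$ (which is again exact and separable). By hypothesis $I \otimes \cZ$ is not stable, so by Theorem~\ref{prop:Z-stable}(i) either $I$ admits a non-zero bounded trace --- in which case we are done --- or $I$ has a unital quotient $I/J$. In the latter case, by hypothesis $I/J$ is not stably properly infinite, so by Theorem~\ref{thm:C-B-H} it admits a quasitrace; since $I/J$ is exact, Haagerup's Theorem~\ref{thm:H} (in Kirchberg's non-unital extension) upgrades this to a genuine tracial state on $I/J$, which pulls back to a non-zero positive bounded trace on $I$. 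Either way $I$ has a non-zero bounded trace, so by Proposition~\ref{prop:faithful} $\cA$ has a faithful tracial state. The one point to be careful about: a unital quotient of $I$ need not itself be an ideal of $\cA$, but the hypothesis is phrased precisely in terms of ``$I$ has a stably properly infinite unital quotient'', so this is exactly what is needed, and no further bookkeeping about ideals of $\cA$ is required.

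For part (iii), assume $\cA$ has stable rank one. The forward implication is part (i). For the converse, I want to verify the extra hypothesis appearing in (ii): namely that no non-zero ideal $I$ has a stably properly infinite unital quotient. But stable rank one passes to ideals and to quotients, and a \Cs{} of stable rank one is stably finite (indeed finite; stable rank one forces finiteness of all matrix algebras, e.g.\ because an invertible element cannot have a non-trivial one-sided inverse). Hence no quotient of $I$ can be stably properly infinite at all, so the second alternative in (ii)'s hypothesis is vacuous, and the hypothesis of (ii) reduces exactly to ``no non-zero ideal $I$ with $I \otimes \cZ$ stable''. Then (ii) gives the faithful tracial state.

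The main obstacle is not any single hard step --- each piece is a short deduction from a cited result --- but rather making sure the hypotheses line up cleanly, in particular in part (ii) that ``unital quotient of $I$'' is handled by the hypothesis as stated (so that we may apply Theorem~\ref{thm:C-B-H} and Theorem~\ref{thm:H} to it directly) without needing these quotients to be subquotients of $\cA$ in a more structured way, and in part (iii) recording correctly that stable rank one implies stable finiteness and that both properties are inherited by ideals and quotients. I expect the write-up to be only a paragraph or two of prose once these permanence facts are invoked.
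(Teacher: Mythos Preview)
Your proposal is correct and follows essentially the same route as the paper: Proposition~\ref{prop:faithful}(iii) reduces (ii) to finding a bounded trace on each non-zero ideal, Theorem~\ref{prop:Z-stable}(i) gives the dichotomy ``bounded trace or unital quotient'', and Theorem~\ref{thm:C-B-H} plus Haagerup yield the trace on the unital quotient; (iii) then follows from (i) and (ii) via the permanence of stable rank one. The only point the paper makes more explicit is that exactness of the unital quotient $I/J$ requires Kirchberg's deep theorem that exactness passes to quotients (so Haagerup's theorem applies directly to the unital algebra $I/J$---no need for the non-unital extension); you should flag this rather than treat it as automatic.
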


\begin{proof} (i) is contained in Theorem~\ref{prop:Z-stable} (and is also trivial, as a faithful tracial state on $\cA$ restricts to a non-zero bounded trace on $I$ which, when tensored with the canonical trace on $\cZ$, gives a bounded trace on $I \otimes \cZ$, which therefore cannot be stable). 

(ii). By  Proposition~\ref{prop:faithful} it suffices to show, under the assumptions in (ii), that every non-zero ideal $I$ of $\cA$ admits a non-zero bounded trace. By Theorem~\ref{prop:Z-stable} (i) and the assumption that $I \otimes \cZ$ is not stable, either $I$ has a bounded trace or a unital quotient $I/J$ (or both). Assume that $I/J$ is unital for some $J \triangleleft I \triangleleft \cA$. Then, by assumption,  
$I/J$  is not stably properly infinite, so it admits a quasitrace $\tau$, by Theorems \ref{thm:C-B-H}. Now, since $\cA$ is exact, so is $I$ and  $I/J$, the latter is a deep fact obtained by Kirchberg,  \cite{Kir:UHF},  see also \cite[Corollary 9.3]{Was:exact}. It therefore follows from Haagerup's theorem (Theorem~\ref{thm:H}) that $\tau$ in fact is a trace on $I/J$. By composition, $\tau$ lifts to a bounded trace on $I$, as desired.

(iii). If $\cA$ has stable rank one, then so do all ideals of $I$ and all quotients of all ideals of $\cA$. As any \Cs{} of stable rank one is stably finite, we conclude that no ideal of $\cA$ has a stably properly infinite quotient.  The result now follows from (i) and (ii).
\end{proof}

\begin{remark} (i). If $\cA$ is $\cZ$-stable, then we can replace ``$I \otimes \cZ$ is stable'' by ``$I$ is stable'' in Theorem~\ref{thm:faithful}  since any ideal of a $\cZ$-stable \Cs{} automatically is $\cZ$-stable. In (ii) we can also replace ``$I$ has a stably properly infinite unital quotient'' by ``$I$ has a properly infinite unital quotient''.

(ii). We can not in general replace ``$I \otimes \cZ$ is stable'' by ``$I$ is stable'' in  Theorem~\ref{thm:faithful} (ii) or (iii). Indeed, for any integer $n \ge 2$, take a simple separable \Cs{} $I$ of stable rank one which is $n$-stable but not $(n-1)$-stable, cf.\ Remark~\ref{rem:sns}. Then $I$ admits no bounded trace, so its unitization $\widetilde{I}$ does not admit a faithful tracial state, but $\widetilde{I}$  has no stable ideal and no unital  quotient other than $\C$.

Also, one can not replace ``$I \otimes \cZ$ is stable'' by ``$I$ is matrix stable''  in  Theorem~\ref{thm:faithful} (ii), 
by Example~\ref{ex:matrixstable} and the argument above.

(iii). We do not know, however, if one can replace ``$I \otimes \cZ$ is stable'' by ``$I$ is matrix stable''   in Theorem~\ref{thm:faithful} (iii). Suppose that $\cA$ is a unital exact separable \Cs{} of stable rank one, and let $I$ be an ideal in $\cA$, which is not matrix stable.  Then, for each $n \ge 1$, it follows from \cite[Proposition 3.6]{Ror:sums} that $M_n(\cM(I)) = \cM(M_n(I))$ is not properly infinite, so $\cM(I)$ admits a quasitrace by Theorem~\ref{thm:C-B-H}. The restriction of such a quasitrace to $I$ will be a bounded trace, because $I$ is exact. However, it may be zero. Still, we do not know of an example of a \Cs{} $I$ of stable rank one,  for which $\cM(I)$ admits a quasitrace, but all quasitraces on $\cM(I)$ vanish on  $I$. Such examples do exists if we omit the requirement of having stable rank one, see (v) below. 

(iv). Relatedly, we do not know if one can find an example of a simple separable \Cs{} $\cA$ of stable rank one for which $\cA \otimes \cZ$ is stable but $\cA$ is not matrix stable, cf.\ Example~\ref{ex:matrixstable} and Remark~\ref{rem:sns}. If such an example exists, which additionally is exact, then its unitation would have no matrix stable ideals, but will   fail to have a faithful trace.

(v). Take $\{\cP_n\}_{n \ge 1}$ to be the sequence of unital simple traceless \Cs s from 
Theorem~\ref{thm:exotic-traces}. Then its $c_0$-direct sum $\bigoplus_{n=1}^\infty \cP_n$ admits no bounded trace (or quasitrace, as it is nuclear), but its multiplier algebra $\cM(\bigoplus_{n=1}^\infty \cP_n) = \prod_{n=1}^\infty \cP_n$ does admit a quasitrace, since a quotient of it does, cf.\ Theorem~\ref{thm:exotic-traces}. 
\end{remark}

\noindent
It would be interesting to have a necessary and sufficient condition for the existence of a faithful trace on a general (separable and exact) \Cs, finding a suitable ``middle road'' of (i) and (ii) in Theorem~\ref{thm:faithful} above. Let us here remark that the condition in (i) is not sufficient, and the conditions of (ii) are not necessary. Indeed, any unital properly infinite simple \Cs{} (e.g., $\cA = \cO_n$) satisfies (i) but has no trace. It was already noted in Example~\ref{ex:non-permanence} (i) that \Cs s with a faithful trace can have a properly infinite quotient.

The situation becomes more clear if, instead, we ask that $\cA$ and \emph{all quotients} of $\cA$ admit a faithful trace. We say that a \Cs{} has the QFTS property if all its quotients admit a faithful trace.
If $I \triangleleft J \triangleleft \cA$ are ideals, we refer to $J/I$ as an \emph{intermediate quotient} of $\cA$.

\begin{theorem} A unital exact separable \Cs{} $\cA$ has the QFTS property if and only if it has no stably properly infinite quotients and no intermediate quotient that is stable when tensored with $\cZ$. 

If $\cA$ moreover is $\cZ$-stable, then $\cA$ has the QFTS property if and only if it has no stable intermediate quotients and no properly infinite quotients. 
\end{theorem}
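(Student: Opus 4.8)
The plan is to reduce the QFTS statement to Theorem~\ref{thm:faithful} applied to each quotient of $\cA$. First I would fix notation: call the two stated conditions $(\star)$: ``$\cA$ has no stably properly infinite quotient and no intermediate quotient $J/I$ for which $(J/I)\otimes\cZ$ is stable''. The ``only if'' direction is the easy half. If $\cA$ has the QFTS property, then in particular every quotient $\cA/I$ admits a faithful trace, hence is stably finite (Remark~\ref{rem:obstructions}), so $\cA$ has no stably properly infinite quotient. For the intermediate quotients: if $I\triangleleft J\triangleleft\cA$, then $J/I$ is an ideal in $\cA/I$, and $\cA/I$ has a faithful trace by hypothesis; a faithful trace restricts to a nonzero bounded trace on the ideal $J/I$, which, tensored with the canonical trace on $\cZ$, gives a nonzero bounded trace on $(J/I)\otimes\cZ$, so that algebra cannot be stable. (Here one uses that quotients and ideals of exact separable \Cs s are again exact and separable, the quotient fact being Kirchberg's theorem cited already in the proof of Theorem~\ref{thm:faithful}.)

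For the ``if'' direction, assume $(\star)$ and let $I\triangleleft\cA$ be arbitrary; I must show $\cA/I$ has a faithful trace. The algebra $\cB:=\cA/I$ is unital, exact, and separable, so by Theorem~\ref{thm:faithful}(ii) it suffices to verify that $\cB$ has no nonzero ideal $\cJ$ for which $\cJ\otimes\cZ$ is stable, and no nonzero ideal $\cJ$ with a stably properly infinite unital quotient. Every ideal $\cJ$ of $\cB=\cA/I$ has the form $J/I$ for an ideal $I\triangleleft J\triangleleft\cA$, i.e.\ is an intermediate quotient of $\cA$; so the first obstruction is excluded directly by the second clause of $(\star)$. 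For the second obstruction, suppose $\cJ=J/I$ had a stably properly infinite unital quotient $\cJ/\cK$. Since every ideal of $\cJ=J/I$ is of the form $K/I$ with $I\triangleleft K\triangleleft J$, this unital quotient is $J/K$, which is itself a quotient of $\cA$ (namely $\cA/K$ need not be it, but $J/K$ is an intermediate quotient $J/K$ of $\cA$, hence — being unital — actually a genuine quotient in the relevant sense). The cleanest formulation: $J/K$ is a unital stably properly infinite intermediate quotient of $\cA$; but a unital intermediate quotient is a quotient of $\cA$ up to the identification that $J$ must then equal $\cA$, which is exactly ruled out by the first clause of $(\star)$ once one checks that a unital ideal is a direct summand. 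I would spell this out: if $J/K$ is unital, its unit is a central projection in $\cA/K$, so $\cA/K \cong (J/K)\oplus(\cA/J)$, hence $J/K$ is a quotient of $\cA$, and it is stably properly infinite, contradicting the first clause of $(\star)$. Thus both obstructions of Theorem~\ref{thm:faithful}(ii) are absent for every quotient $\cB$ of $\cA$, so every quotient has a faithful trace.

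For the $\cZ$-stable addendum, I would invoke the already-noted fact (Remark following Theorem~\ref{thm:faithful}) that every ideal, and hence every intermediate quotient, of a $\cZ$-stable \Cs{} is again $\cZ$-stable; therefore ``$(J/I)\otimes\cZ$ stable'' becomes simply ``$J/I$ stable'', and ``stably properly infinite quotient'' can be replaced by ``properly infinite quotient'' since a $\cZ$-stable \Cs{} is properly infinite as soon as it is stably properly infinite ($\cZ$-stability forces $M_n(\cB)\cong\cB\otimes M_n(\cZ)$ to sit inside $\cB$-matrix structure compatibly; more directly, $\cZ$-absorption implies that if some $M_n(\cB)$ is properly infinite then $\cB$ is, by a standard argument using that $\cZ$ contains a unital copy of itself and the comparison theory of $\cZ$-stable algebras). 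The main obstacle I anticipate is purely bookkeeping: correctly matching ``ideals of quotients'' with ``intermediate quotients'' and handling the unital-quotient clause, in particular making rigorous the claim that a unital intermediate quotient of an exact \Cs{} that is stably properly infinite must come from a genuine quotient of $\cA$ so that the first clause of $(\star)$ applies. Once that dictionary is set up carefully, the theorem is an immediate corollary of Theorem~\ref{thm:faithful}(ii) together with Proposition~\ref{prop:faithful} and the exactness-passes-to-quotients input.
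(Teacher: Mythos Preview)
Your argument is correct. The core ingredients---the direct-summand observation that a unital ideal $J/K$ in $\cA/K$ is a genuine quotient of $\cA$, together with the exactness-of-quotients input and the trace/stability dichotomy from Theorem~\ref{prop:Z-stable}---are the same as in the paper. The organization, however, is inverted. You prove the general (first) statement directly, by applying Theorem~\ref{thm:faithful}(ii) to every quotient $\cA/I$ and translating its two obstructions into the two clauses of $(\star)$; you then derive the $\cZ$-stable statement as a simplification. The paper does the opposite: it proves the $\cZ$-stable case first by unpacking Proposition~\ref{prop:faithful} and Theorem~\ref{prop:Z-stable} directly (essentially re-running the proof of Theorem~\ref{thm:faithful}(ii) in that setting), and then obtains the general case in one line by applying the $\cZ$-stable statement to $\cA\otimes\cZ$, using that $\cA$ has QFTS iff $\cA\otimes\cZ$ does (Proposition~\ref{prop:perm-fts}) and that $\cB\otimes\cZ$ is properly infinite iff $\cB$ is stably properly infinite. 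Your route is a clean black-box use of Theorem~\ref{thm:faithful}(ii); the paper's route avoids re-quoting that theorem and instead exploits the tensoring-with-$\cZ$ reduction as a unifying device. One small cleanup: your momentary suggestion that ``$J$ must then equal $\cA$'' is not needed and not correct---as you immediately note, the point is simply that a unital ideal is a direct summand, so $J/K$ is a quotient of $\cA$ regardless of whether $J=\cA$.
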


\begin{proof} We first prove the second part of the theorem. The argument resembles the proof of Theorem~\ref{thm:faithful} (ii). The ``only if'' part is easy and follows from Remark~\ref{rem:obstructions}. 

Suppose $\cA$ does not have the QFTS property, and let $I \triangleleft \cA$ be such that $\cA/I$ has no faithful trace. Then, by Proposition~\ref{prop:faithful}, there is $I \triangleleft J \triangleleft \cA$  such that $J/I$ has no bounded trace. Being $\cZ$-stable passes to intermediate ideals, so either $J/I$ is stable or has a unital quotient, by Theorem~\ref{prop:Z-stable}. In the latter case we will have an ideal $I \triangleleft K \triangleleft J \triangleleft \cA$ such that $J/K$ is unital. Since $J/I$ has no bounded trace, neither does $J/K$, and as $J/K$ is exact by Kirchberg's theorem that exactness passes to quotients, mentioned earlier, we infer that $J/K$ is stably properly infinite, and hence properly infinite, by $\cZ$-stability. Finally, since $J/K$ is a unital ideal in $\cA/K$, it is a direct summand, and therefore $J/K$ is isomorphic to a quotient of $\cA$.

The first part of the theorem follows from the second by applying it to $\cA \otimes \cZ$. Use that $\cA$ has the QTFS property if and only if $\cA \otimes \cZ$ does, and that $\cB \otimes \cZ$ is  properly infinite if and only if $\cB$ is stably properly infinite.
\end{proof}

{\small{
\bibliographystyle{amsplain}


\providecommand{\bysame}{\leavevmode\hbox to3em{\hrulefill}\thinspace}
\providecommand{\MR}{\relax\ifhmode\unskip\space\fi MR }
\providecommand{\MRhref}[2]{%
  \href{http://www.ams.org/mathscinet-getitem?mr=#1}{#2}
}
\providecommand{\href}[2]{#2}


%
%
}}

\vspace{1cm}

\noindent
\begin{tabular}{ll}
Henning Olai Milh\o j  & Mikael R\o rdam \\
Department of Mathematical Sciences & Department of Mathematical Sciences\\
University of Copenhagen & University of Copenhagen\\ 
Universitetsparken 5, DK-2100, Copenhagen \O & Universitetsparken 5, DK-2100, Copenhagen \O \\
Denmark & Denmark\\
 henningmilhoj@gmail.com & rordam@math.ku.dk
\end{tabular}


%
%

\end{document}